\documentclass[11pt]{amsart}

\usepackage{hyperref}

\usepackage{amssymb,latexsym, amsthm, enumerate, mathptmx}

\usepackage{tikz}
\usetikzlibrary{arrows}

\textwidth=15 cm
\textheight=22 cm
\topmargin=0.5 cm
\oddsidemargin=0.5 cm
\evensidemargin=0.5 cm
\footskip=40 pt

\newtheorem{theorem}{Theorem}[section]
\newtheorem{lemma}[theorem]{Lemma}
\newtheorem{proposition}[theorem]{Proposition}
\newtheorem{corollary}[theorem]{Corollary}
\newtheorem{claim}{Claim}
\newtheorem{conjecture}[theorem]{Conjecture}

\newtheoremstyle{definition}
  {6pt}
  {6pt}
  {}
  {}
  {\bfseries}
  {.}
  {.5em}
  {}%

\theoremstyle{definition}
\newtheorem{definition}[theorem]{Definition}

\newtheoremstyle{remark}
  {6pt}
  {6pt}
  {}
  {}
  {\bfseries}
  {.}
  {.5em}
  {}%

\theoremstyle{remark}
\newtheorem{remark}[theorem]{Remark}

\newtheoremstyle{example}
  {6pt}
  {6pt}
  {}
  {}
  {\bfseries}
  {.}
  {.5em}
  {}%

\theoremstyle{example}
\newtheorem{example}[theorem]{Example}

\makeatletter
\renewcommand\@makefntext[1]{%
\setlength\parindent{1em}%
\noindent
\makebox[1.8em][r]{}{#1}}
\makeatother

\DeclareMathOperator{\codim}{codim}
\DeclareMathOperator{\pnt}{\raise 0.5mm \hbox{\large\bf.}}
\DeclareMathOperator{\lpnt}{\hbox{\large\bf.}}

\title{Broken circuit complexes and hyperplane arrangements}

\author{Le Van Dinh}
\address{Universit\"at Osnabr\"uck, Institut f\"ur Mathematik, 49069 Osnabr\"uck, Germany}
\email{dlevan@uos.de}

\author{Tim R\"omer}
\address{Universit\"at Osnabr\"uck, Institut f\"ur Mathematik, 49069 Osnabr\"uck, Germany}
\email{troemer@uos.de}

\begin{document}

\begin{abstract}
We study Stanley-Reisner ideals of broken circuits complexes and characterize those ones admitting a linear resolution or being complete intersections. These results will then be used to characterize arrangements whose Orlik-Terao ideal has the same properties. As an application, we improve a result of Wilf on upper bounds for the coefficients of the chromatic polynomial of a maximal planar graph. We also show that for an ordered matroid with disjoint minimal broken circuits, the supersolvability of the matroid is equivalent to the Koszulness of its Orlik-Solomon algebra.
\end{abstract}

\maketitle

\section{Introduction}

Let $V$ be a vector space of dimension $r$ over a field ${K}$. Denote by $V^*$ the dual space of $V$. Let $\mathcal{A}=\{H_1,\ldots,H_n\}$ be an essential central hyperplane arrangement in $V$. Then the underlying matroid $M(\mathcal{A})$ of $\mathcal{A}$ has rank $r$ and there are linear forms $\alpha_i\in V^*$ such that $\ker \alpha_i=H_i$ for $i=1,\ldots,n.$ Let $X=V\backslash\bigcup_{i=1}^n H_i$ be the complement of the hyperplanes. It is well-known that when ${K}=\mathbb{C}$ the cohomology ring $H^{\pnt}(X,\mathbb{Z})$ of $X$ depends only on the matroid $M(\mathcal{A})$: $H^{\pnt}(X,\mathbb{Z})$ is isomorphic as a graded $\mathbb{C}$-algebra to the Orlik-Solomon algebra $\mathbf{A}(\mathcal{A})$ of $\mathcal{A}$; see \cite[Theorem 5.2]{OS}. Here the Orlik-Solomon algebra $\mathbf{A}(\mathcal{A})$ is defined as the quotient of the standard graded exterior algebra $E=\mathbb{Z}\langle e_1,\ldots,e_n\rangle$ by the Orlik-Solomon ideal $J(\mathcal{A})$ of $\mathcal{A}$ which
  is generated
by all elements of the form
$$\partial e_{i_1\ldots i_m}=\sum_{t=1}^m(-1)^{t-1}e_{i_1}\cdots \widehat{e_{i_t}}\cdots e_{i_m},$$
where $\{H_{i_1},\ldots,H_{i_m}\}$ is a dependent subset of $\mathcal{A}$, i.e., $\alpha_{i_1},\ldots,\alpha_{i_m}$ are linearly dependent. Since their appearance in \cite{OS}, the Orlik-Solomon algebra has been proved to be a very important algebraic object associated to an arrangement and it has been studied intensively; see, e.g., \cite{B,EPY,KR,OT,SS1,Y2} for details.

\footnotetext{
\begin{itemize}
\item[ ]
{\it Mathematics  Subject  Classification} (2010): 05E40, 13C40, 13D02,  13F55,  52B40, 52C35.
\item[ ]
{\it Key words and phrases}: broken circuit complex, complete intersection, hyperplane arrangement, matroid, Orlik-Terao algebra, resolution.
\end{itemize}
}

The Orlik-Terao algebra of $\mathcal{A}$, which was first introduced in \cite{OT2}, is a commutative analog of the  Orlik-Solomon algebra. For our purposes we follow the exposition of Schenck-Tohaneanu \cite{ST}. Let $S={K}[x_1,\ldots,x_n]$ be the polynomial ring in $n$ variables over ${K}$ ($n$ is the number of hyperplanes of $\mathcal{A}$). Define a ${K}$-linear map
$$\varphi:S_1=\bigoplus_{i=1}^n{K}x_i\rightarrow V^*,\ x_i\mapsto \alpha_i\ \text{ for }\ i=1,\ldots,n.$$
We call the kernel of this map the {\it relation space} and denote it by $F(\mathcal{A})$. Elements of $F(\mathcal{A})$ are called {\it relations}.  Observe that relations come from dependencies among hyperplanes in $\mathcal{A}$: If $\{H_{i_1},\ldots,H_{i_m}\}$ is a dependent subset of $\mathcal{A}$ and $a_t\in{K}$ are coefficients such that $\sum_{t=1}^ma_t\alpha_{i_t}=0$, then $r=\sum_{t=1}^ma_tx_{i_t}$ is a relation.

\begin{definition}
For each relation $r\in F(\mathcal{A})$, we write $r=\sum_{t=1}^ma_tx_{i_t}$ with $a_t\ne0$ for all $t=1,\ldots,m$. Let
$$\partial(r)=\sum_{t=1}^ma_t(x_{i_1}\cdots \widehat{x_{i_t}}\cdots x_{i_m})\in S.$$
Then $I(\mathcal{A})=(\partial(r):r\in F(\mathcal{A}))$ is the {\it Orlik-Terao ideal}, and $\mathbf{C}(\mathcal{A})=S/I(\mathcal{A})$ is the {\it Orlik-Terao algebra} of the arrangement $\mathcal{A}$.
\end{definition}

From the similarity between the Orlik-Solomon algebra and the Orlik-Terao algebra it is natural to hope that the Orlik-Terao algebra also encodes useful information about the arrangement (in some sense the Orlik-Terao algebra seems to see ``more" because it records the ``weights" of the dependencies among the hyperplanes). In fact, Orlik-Terao \cite{OT2} proved, when ${K}=\mathbb{R}$, that the dimension of the {\it artinian Orlik-Terao algebra} (i.e., the quotient of $\mathbf{C}(\mathcal{A})$ by the ideal $(x_1^2,\ldots,x_n^2)$) is equal to the number of connected components of the complement $X$ of the hyperplanes. Then Terao \cite{T} computed the Hilbert series of $\mathbf{C}(\mathcal{A})$ via  the Poincar\'{e} polynomial of $\mathcal{A}$ (see Proposition \ref{th26}). In \cite{ST}, Schenck-Tohaneanu raised a new interest in the Orlik-Terao algebra by giving a characterization of the 2-formality of $\mathcal{A}$ in terms of the quadratic component of its Orlik-Terao ideal. See also the survey of Schenck \cite{Sc} for other results and problems concerning the Orlik-Terao algebra.

In this paper we are interested in Orlik-Terao algebras with extremal properties like, e.g., having a linear resolution or being complete intersections. We give characterizations for arrangements whose Orlik-Terao algebra has one of these two properties. It turns out that these properties of the Orlik-Terao algebra are combinatorial, in the sense that they only depend on the underlying matroid of the arrangement.

Our approach is based on a closed connection between the Orlik-Terao ideal and the Stanley-Reisner ideal of the broken circuit complex of the underlying matroid of the arrangement which was in particular studied in \cite{PS}: the latter one is the initial ideal of the former one (see Theorem \ref{th24}). Normally, a property which holds for an ideal need not hold for its initial ideal and vice versa. Fortunately, this is the case for the Orlik-Terao ideal and the two properties we are interested in (see Corollary \ref{co1} and Theorem \ref{th415}). Thus our strategy is as follows: We first consider Stanley-Reisner ideals of the broken circuit complexes of simple matroids and characterize those admitting a linear resolution or being complete intersections (see Theorem \ref{lm1}, Theorem \ref{th41}). These results will then be applied to yield characterizations of arrangements whose Orlik-Terao ideal having the same properties (see Theorem \ref{th34}, Theorem \ref{th415}).

Our results have several interesting consequences. For instance, it is shown in Corollary \ref{co48} that for a triangulation of a simple polygon, its cycle matroid, with respect to a suitable ordering of the edges, has pairwise disjoint minimal broken circuits. Whereas Theorem \ref{th411} is an improvement of Wilf's upper bounds on the coefficients of the chromatic polynomial of a maximal planar graph in \cite{W}. For matroids whose Stanley-Reisner ideal of the broken circuit complex is a complete intersection, we compute the Poincar\'{e} polynomials of their Orlik-Solomon algebras and verify in Theorem \ref{th48} the following conjecture which was first studied in \cite{SY}:
\begin{conjecture}\label{cj}
 A matroid (an arrangement) is supersolvable if and only if its Orlik-Solomon algebra is Koszul.
\end{conjecture}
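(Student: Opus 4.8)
The plan is to prove the two implications by different means, since only one of them is reachable with present techniques. For ``supersolvable $\Rightarrow$ Koszul'' I would follow Shelton--Yuzvinsky \cite{SY}: by Terao's theorem a supersolvable ordered matroid is of fiber type, so the complement $X$ is built by a tower of bundle projections with fibres punctured affine lines, and correspondingly $\mathbf{A}(\mathcal{A})$ is an iterated twisted tensor product of the (manifestly Koszul) Orlik--Solomon algebras of matroids of rank at most two; Koszulness is preserved under twisted tensor products. Equivalently, ordering the ground set so that it refines a modular flag makes the generators $\partial e_{i_1\ldots i_m}$ and the straightening relations a quadratic Gr\"obner basis of $J(\mathcal{A})$ (in that order all minimal broken circuits have cardinality two), and an algebra with a quadratic Gr\"obner basis is Koszul. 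So this direction is essentially a citation plus bookkeeping, and I would spend little space on it.

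The substance is ``Koszul $\Rightarrow$ supersolvable.'' A first point to record is what Koszulness already forces: the quadratic dual of $\mathbf{A}(\mathcal{A})$ has Hilbert series $1/\pi(\mathcal{A},-t)$ (the reciprocal of the Poincar\'e polynomial at $-t$), which must therefore lie in $\mathbb{Z}_{\ge 0}[[t]]$; this already makes $\mathbf{A}(\mathcal{A})$ a quadratic algebra and rules out generic matroids, e.g.\ the rank-$3$ arrangement of four hyperplanes with $\pi(\mathcal{A},t)=1+4t+6t^2+3t^3$, whose reciprocal at $-t$ acquires a negative coefficient. The conjecture predicts far more --- the factorization $\pi(\mathcal{A},t)=\prod_{i=1}^r(1+b_it)$ over the nonnegative integers that Terao's theorem gives for supersolvable matroids --- so one subgoal is to upgrade this positivity, together with the lower-central-series type identity for the holonomy Lie algebra that Koszulness also implies, into such a factorization. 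I would then run an induction on the rank $r$ (base $r\le 2$ trivial, since every matroid of rank at most two is supersolvable with Koszul Orlik--Solomon algebra): the inductive step requires (1) that Koszulness of $\mathbf{A}(\mathcal{A})$ produces a modular coatom $X$ of the underlying matroid $M$, and (2) that Koszulness descends to the restriction $\mathbf{A}(M|X)$, whence $M|X$ is supersolvable by induction and so is $M$. Step (2) looks approachable --- $\mathbf{A}(M|X)$ is the subalgebra of $\mathbf{A}(\mathcal{A})$ on the generators indexed by $X$ --- but needs an argument special to Orlik--Solomon algebras, since subalgebras of Koszul algebras need not be Koszul.

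Step (1) is the crux and, I expect, the genuine obstacle: it amounts to finding a linear order on the ground set in which $J(\mathcal{A})$ has a quadratic Gr\"obner basis (equivalently, in which all minimal broken circuits have cardinality two), and Koszulness is strictly weaker than possessing a quadratic Gr\"obner basis for a general graded algebra, so any proof must exploit the specific features of $\mathbf{A}(\mathcal{A})$ --- the $\mathrm{nbc}$ monomial basis, the geometric lattice of flats, the Brieskorn decomposition --- and no known mechanism converts the vanishing of off-diagonal $\mathrm{Tor}$ into a modular flat. Failing a full solution, the realistic deliverable, and what this paper establishes, is the conjecture in the first nontrivial class where both sides are transparent: matroids with pairwise disjoint minimal broken circuits. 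There the Stanley--Reisner ideal of the broken circuit complex is a complete intersection (Theorem \ref{th41}), $\pi(\mathcal{A},t)$ can be written down explicitly, and both Koszulness and supersolvability reduce to the same elementary condition on the cardinalities of the broken circuits; this is carried out in Theorem \ref{th48}, giving the conjecture unconditionally for, e.g., cycle matroids of triangulated polygons (Corollary \ref{co48}) and isolating the factorization of $\pi(\mathcal{A},t)$ over the nonnegative integers as the numerical shadow that Koszulness must enforce in general.
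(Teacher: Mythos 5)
Your assessment is accurate: the statement is a conjecture that the paper does not prove in full, and the part it does establish --- the case of matroids with pairwise disjoint minimal broken circuits --- is proved exactly along the lines you sketch, namely via the complete intersection structure of $\mathcal{I}_\prec(M)$, the resulting explicit factorization of $\pi(\mathbf{A}(M),t)$, the implication Koszul $\Rightarrow$ quadratic $\Rightarrow$ all minimal broken circuits have size $2$ $\Rightarrow$ supersolvable by Bj\"orner--Ziegler, and the standard quadratic Gr\"obner basis argument for the converse (Theorem \ref{th48}). Your identification of the genuine obstruction in the general ``Koszul $\Rightarrow$ supersolvable'' direction, and of the numerical positivity of $1/\pi(\mathbf{A}(\mathcal{A}),-t)$ as its known shadow, matches the state of the art reflected in the paper.
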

A similar result for arrangements whose Orlik-Terao algebra is a complete intersection will then be derived in Corollary \ref{th416}. Note that up to now, the above conjecture has been proved for hypersolvable arrangements and graphic arrangements; see \cite{JP}, \cite{SS1}. 

Note also that Denham, Garrousian and Tohaneanu have recently studied Orlik-Terao algebras which are quadratic complete intersections with a different method and they have independently obtained a result similar to Corollary \ref{th416}; see \cite[Corollary 5.12]{DGT}.

Before going into details, let us explain how this paper is organized. In Section 2, we recall some notions and facts concerning broken circuit complexes and hyperplane arrangements. Section 3 is divided into two parts. We first characterize simple matroids whose Stanley-Reisner ideal of the broken circuit complex admits a linear resolution. Characterizations of arrangements whose Orlik-Terao ideal has the same property will be given thereafter. Note that similar characterizations for matroids and hyperplane arrangements whose Orlik-Solomon ideal admits a linear resolution were obtained before in \cite{EPY} and \cite{KR}. Section 4 also contains two parts. In the first part, after giving characterizations of simple matroids whose Stanley-Reisner ideal of the broken circuit complex is a complete intersection, we prove that Conjecture \ref{cj} holds for those matroids. Apart from the applications to graph theory mentioned above, we also show that for  the Stanley-Reisner ideal
 of a broken circuit complex of codimension 3, the Gorensteiness implies the complete intersection property (Proposition \ref{pr412}). In the second part, we characterize arrangements whose Orlik-Terao ideal is a complete intersection and verify again Conjecture \ref{cj} for those arrangements.

\section{Preliminaries}

In this section we review some notions and results from the theory of matroids and hyperplane arrangements which will be used throughout this paper. For unexplained terms and further details we refer to \cite{B}, \cite{OT}, \cite{O} .

Let us first recall the notion of matroid. A {\it matroid} $M$ on the ground set $\Gamma$ is a collection $\mathfrak{I}$ of subsets of $\Gamma$ satisfying the following conditions:
\begin{enumerate}
\item
$\emptyset\in\mathfrak{I};$
\item
If $I\in \mathfrak{I}$ and $I'\subseteq I$, then $I'\in\mathfrak{I}$;
\item
If $I,I'\in \mathfrak{I}$ and $|I'|<| I|$, then there is an element $i\in I-I'$ such that $I'\cup\{i\}\in\mathfrak{I}.$
\end{enumerate}
The members of $\mathfrak{I}$ are called {\it independent sets}. All the maximal independent sets of $M$ have the same cardinality, we call this cardinality the {\it rank} of $M$. {\it Dependent sets } are subsets of $\Gamma$ that are not in $\mathfrak{I}$. Minimal dependent sets are called {\it circuits}. The matroid $M$ is {\it simple} if each circuit has cardinality at least 3. Denote by $\mathfrak{C}(M)$ the set of all circuits of $M$. Clearly,  $\mathfrak{C}(M)$ determines $M$: $\mathfrak{I}$ consists of subsets of $\Gamma$ that do not contain any member of $\mathfrak{C}(M)$.  We will need the following elimination theorem for circuits. A more general version of this result can be found in \cite[Theorem 3]{As}.

\begin{theorem}\label{lm7}
 Let $M$ be a matroid on $\Gamma$ and let $C_1,\ldots,C_m$ be circuits of $M$. Assume that
$$C_i\nsubseteq \bigcup_{j<i}C_j\quad \text{for all}\ i=2,\ldots,m.$$
Then for each subset $B$ of $\Gamma$ with $|B|=m-1$, there exists a circuit $C$ of $M$ such that
$$C\subseteq \bigcup_{j=1}^mC_j-B.$$
\end{theorem}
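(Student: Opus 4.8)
The plan is to deduce the statement from the rank estimate
$$r\Bigl(\textstyle\bigcup_{j=1}^m C_j\Bigr)\le\Bigl|\bigcup_{j=1}^m C_j\Bigr|-m,$$
where $r$ denotes the rank function of $M$, which I would prove by induction on $m$. For $m=1$ this is just the fact that a circuit $C_1$ satisfies $r(C_1)=|C_1|-1$. For the inductive step, set $Z=\bigcup_{j<m}C_j$ and apply submodularity of the rank function to $Z$ and $C_m$, namely $r(Z\cup C_m)\le r(Z)+r(C_m)-r(Z\cap C_m)$. The induction hypothesis applies to $C_1,\dots,C_{m-1}$ (the nestedness condition $C_i\nsubseteq\bigcup_{j<i}C_j$ for $i\le m-1$ is inherited from the one for $C_1,\dots,C_m$) and gives $r(Z)\le|Z|-(m-1)$; moreover $r(C_m)=|C_m|-1$; and, crucially, the hypothesis $C_m\nsubseteq Z$ forces $Z\cap C_m$ to be a proper subset of the circuit $C_m$, hence independent, so that $r(Z\cap C_m)=|Z\cap C_m|$. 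Substituting these three facts and using $|Z|+|C_m|-|Z\cap C_m|=|Z\cup C_m|$ yields the bound for $m$.

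Granting the rank estimate, let $B\subseteq\Gamma$ with $|B|=m-1$ and put $D=\bigcup_{j=1}^m C_j-B$. Since $\bigl|\bigcup_{j=1}^m C_j\bigr|\le|D|+|B|=|D|+m-1$, the estimate gives $r(D)\le r\bigl(\bigcup_{j=1}^m C_j\bigr)\le|D|-1<|D|$. In particular $|D|\ge1$ and $D$ is dependent, so $D$ contains a circuit $C$, and then $C\subseteq D=\bigcup_{j=1}^m C_j-B$, as required. (For orientation, the case $m=2$ is exactly the classical strong circuit elimination axiom, and the argument above is its natural generalization: each successive circuit not contained in the union so far contributes an independent overlap, which is what makes the rank of the union drop by a full $m$.)

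I do not expect a serious obstacle here. The two points that need a little care are verifying that the induction hypothesis is genuinely available at each stage — it is, precisely because the nestedness hypothesis restricts to initial segments — and the observation that $C_m\nsubseteq\bigcup_{j<m}C_j$ is exactly the ingredient that makes $\bigl(\bigcup_{j<m}C_j\bigr)\cap C_m$ independent; without it submodularity alone would not suffice. Everything else is routine bookkeeping with submodularity and cardinalities.
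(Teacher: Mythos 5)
Your argument is correct. The key rank estimate $r\bigl(\bigcup_{j=1}^m C_j\bigr)\le\bigl|\bigcup_{j=1}^m C_j\bigr|-m$ is proved soundly: the nestedness hypothesis restricts to initial segments, so the induction hypothesis is available; $C_m\nsubseteq\bigcup_{j<m}C_j$ makes $\bigl(\bigcup_{j<m}C_j\bigr)\cap C_m$ a proper subset of a circuit, hence independent of rank equal to its cardinality; and submodularity then gives the drop by a full $m$. The deduction of the theorem is also fine: with $D=\bigcup_{j=1}^m C_j-B$ one has $\bigcup_{j=1}^m C_j\subseteq D\cup B$, so $r(D)\le r\bigl(\bigcup_{j=1}^m C_j\bigr)\le |D|+(m-1)-m<|D|$, whence $D$ is dependent and contains a circuit. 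Note, for comparison, that the paper does not prove this statement at all: it is quoted as Theorem 2.1 with a pointer to Asche's paper on minimal dependent sets, where a more general version appears. So your submodularity/rank-counting argument is a genuinely self-contained substitute for that citation, and it is essentially the standard modern proof of this circuit-union fact (the natural generalization of strong circuit elimination, as you observe); what it buys is independence from the reference, at the cost of nothing beyond the rank axioms. No gaps to report.
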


For two matroids $M_1$ and $M_2$ on disjoint ground set $\Gamma_1$ and $\Gamma_2$, we define their {\it direct sum} $M_1\oplus M_2$ to be the matroid on the ground set $\Gamma_1\cup\Gamma_2$ whose independent sets are the unions of an independent set of $M_1$ and an independent set of $M_2$. In other words, the circuits of $M_1\oplus M_2$ are those of $M_1$ and those of $M_2$, i.e., $\mathfrak{C}(M_1\oplus M_2)=\mathfrak{C}(M_1)\cup\mathfrak{C}(M_2).$

\begin{example}\label{ex1}
(i) Let $m\leq n$ be non-negative integers and let $\Gamma$ be an $n$-element set. The {\it uniform matroid} $U_{m,n}$ on $\Gamma$ is the matroid whose the independent sets are the subsets of $\Gamma$ of cardinality at most $m$. This matroid has rank $m$ and its circuits are the $(m+1)$-element subsets of $\Gamma$. For $m\geq2$, $U_{m,n}$ is simple. When $m=n$, the matroid $U_{n,n}$ has no dependent sets and is called {\it free}.

(ii) Let $\mathcal{A}=\{H_1,\ldots,H_n\}$ be a central hyperplane arrangement in a vector space $V$ and let $\alpha_i\in V^*$ be linear forms such that $\ker \alpha_i=H_i$ for $i=1,\ldots,n.$ Then we can define a matroid $M(\mathcal{A})$ on the ground set $\mathcal{A}$ by taking the independent sets to be the independent subsets of $\mathcal{A}$, i.e., the subsets $\{H_{i_1},\ldots,H_{i_m}\}$ such that $\alpha_{i_1},\ldots,\alpha_{i_m}$ are linearly independent. We call $M(\mathcal{A})$ the {\it underlying matroid} of $\mathcal{A}.$ Clearly, this matroid is simple. In the following we will usually identify the ground set $\mathcal{A}$ with $[n]:=\{1,\ldots,n\}$ and consider $M(\mathcal{A})$ as a matroid on $[n].$

It is apparent that uniform matroids and free matroids are underlying matroids of generic arrangements and Boolean arrangements, respectively. Moreover, if we have two arrangements $\mathcal{A}_1$ and $\mathcal{A}_2$ in vector spaces $V_1$ and $V_2$, then $M(\mathcal{A}_1\times \mathcal{A}_2)=M(\mathcal{A}_1)\oplus M(\mathcal{A}_2)$, where the {\it product arrangement} $\mathcal{A}_1\times \mathcal{A}_2$ is defined in the space $V=V_1\oplus V_2$ as follows:
$$\mathcal{A}_1\times \mathcal{A}_2=\{H_1\oplus V_2: H_1\in\mathcal{A}_1\}\cup\{V_1\oplus H_2: H_2\in\mathcal{A}_2\}.$$

(iii) Let $G$ be a graph whose the edge set is $\mathcal{E}.$ Let $\mathfrak{C}$ be the set of edge sets of cycles of $G$. Then $\mathfrak{C}$ forms the set of circuits of a matroid $M(G)$ on $\mathcal{E}$. We call $M(G)$ the {\it cycle matroid} (or {\it graphic matroid}) of $G$. This matroid is simple if $G$ is a simple graph.
\end{example}

Now assume that $(M,\prec)$ is an ordered matroid of rank $r$ on $[n]$. This means that the matroid $M$ is given with a linear ordering $\prec$ of the ground set $[n]$. (Notice that $\prec$ need not be the ordinary ordering of $[n]$.) For each circuit $C$ of $M$, let $\min_\prec(C)$ be the minimal element of $C$ with respect to $\prec$.  By abuse of notation, we sometimes also write $\min_\prec (C)$ for the set $\{\min_\prec(C)\}$. Then $bc_\prec(C)=C-\min_\prec(C)$ is called a {\it broken circuit}. The {\it broken circuit complex} of $(M,\prec)$, denote by $BC_\prec(M)$, is the collection of all subsets of $[n]$ that do not contain a broken circuit. It is well-known that $BC_\prec(M)$ is an $(r-1)$-dimensional shellable complex; see \cite{Pr} or also \cite[7.4]{B}.
Let ${K}$ be a field and let $\mathcal{I}_\prec(M)\subset S={K}[x_1,\ldots,x_n]$ be the Stanley-Reisner ideal of the broken circuit complex $BC_\prec(M)$. Then $\mathcal{I}_\prec(M)$ is generated by all the monomials $x_{bc_\prec(C)}:=\prod_{i\in bc_\prec(C)}x_i$, where  $C\in \mathfrak{C}(M)$. From the shellability of $BC_\prec(M)$ it follows that the Stanley-Reisner ring $S/\mathcal{I}_\prec(M)$ is a Cohen-Macaulay ring of dimension $r$.

When $M=M(\mathcal{A})$ is the underlying matroid of a central arrangement $\mathcal{A}$, Proudfoot and Speyer \cite{PS} showed that the Stanley-Reisner ring of $BC_\prec(M(\mathcal{A}))$ is a degeneration of the Orlik-Terao algebra of $\mathcal{A}$ for any choice of the ordering $\prec$ (here $M(\mathcal{A})$ is considered as a matroid on $[n]$; see Example \ref{ex1}(ii)). This relation between the two algebras, which plays an important role to our paper, is the spirit of the following theorem. Note that if $C$ is a circuit of $M(\mathcal{A})$, then there exist nonzero scalars $\{a_i:i\in C\}$, unique up to scaling, such that $r_C=\sum_{i\in C}a_ix_i$  is a relation of the relation space $F(\mathcal{A})$. Recall from \cite[Theorem\ 4]{PS}:

\begin{theorem}
\label{th24}
Let $\mathcal{A}$ be a central arrangement of $n$ hyperplanes in a vector space $V$ over a field ${K}$. Let $M=M(\mathcal{A})$ be the underlying matroid of $\mathcal{A}$. Then the set $\{\partial(r_C):C\in \mathfrak{C}(M)\}$ is a universal Gr\"{o}bner basis for the Orlik-Terao ideal $I(\mathcal{A})$ of $\mathcal{A}$. Given any ordering $\prec$ of $[n]$, with an arbitrary induced monomial order on ${K}[x_1,\ldots,x_n]$, we have $\mathrm{in}_\prec(I(\mathcal{A}))=\mathcal{I}_\prec(M).$
\end{theorem}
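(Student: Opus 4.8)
The plan is to establish the following uniform statement, which contains both assertions of the theorem: for \emph{every} monomial order $<$ on $S=K[x_1,\ldots,x_n]$, letting $\prec$ be the linear order on $[n]$ induced by $<$ on the variables (say $i\prec j\iff x_i<x_j$), one has $\mathrm{in}_<(I(\mathcal{A}))=\mathcal{I}_\prec(M)$, and moreover $\{\partial(r_C):C\in\mathfrak{C}(M)\}$ is a Gr\"obner basis of $I(\mathcal{A})$ with respect to $<$. Letting $<$ range over all monomial orders then shows that this set is a universal Gr\"obner basis (hence in particular a generating set of $I(\mathcal{A})$), and the displayed identity is the second assertion. The argument is a Hilbert-function squeeze. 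Since $r_C\in F(\mathcal{A})$, each $\partial(r_C)$ lies in $I(\mathcal{A})$, so $J:=(\partial(r_C):C\in\mathfrak{C}(M))\subseteq I(\mathcal{A})$ and therefore
\[
\bigl(\mathrm{in}_<(\partial(r_C)):C\in\mathfrak{C}(M)\bigr)\ \subseteq\ \mathrm{in}_<(J)\ \subseteq\ \mathrm{in}_<(I(\mathcal{A})).
\]
First I would identify the leftmost ideal with $\mathcal{I}_\prec(M)$, and then show that $S/\mathcal{I}_\prec(M)$ and $S/I(\mathcal{A})$ have the same Hilbert series. As $\mathrm{HS}(S/I(\mathcal{A}))=\mathrm{HS}(S/\mathrm{in}_<(I(\mathcal{A})))$, all three quotients above would then share a common Hilbert function, and the natural surjection $S/\mathcal{I}_\prec(M)\twoheadrightarrow S/\mathrm{in}_<(I(\mathcal{A}))$ (available because of the inclusion) would be forced to be an isomorphism, yielding $\mathcal{I}_\prec(M)=\mathrm{in}_<(I(\mathcal{A}))$.

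For the first point, note that $C$ is a minimal dependent set, so the relation $r_C=\sum_{i\in C}a_ix_i$ has every coefficient $a_i$ nonzero; hence every term of $\partial(r_C)=\sum_{i\in C}a_i\,(x_C/x_i)$ is a nonzero scalar multiple of a squarefree monomial $x_C/x_i$ of degree $|C|-1$, and these $|C|$ monomials are pairwise distinct. Since $x_C/x_i>x_C/x_j\iff x_j>x_i$, they are pairwise comparable and $\mathrm{in}_<(\partial(r_C))=x_C/x_{i_0}$, where $x_{i_0}$ is the $<$-smallest variable among $\{x_i:i\in C\}$; by the choice of $\prec$ this means $i_0=\min_\prec(C)$, so $\mathrm{in}_<(\partial(r_C))=x_{bc_\prec(C)}$. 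As $\mathcal{I}_\prec(M)$ is, by definition, generated by the monomials $x_{bc_\prec(C)}$ with $C\in\mathfrak{C}(M)$, the leftmost ideal above is exactly $\mathcal{I}_\prec(M)$. For the second point, Terao's computation (Proposition \ref{th26}) gives $\mathrm{HS}(\mathbf{C}(\mathcal{A}),t)=\pi(\mathcal{A},\tfrac{t}{1-t})$, where $\pi(\mathcal{A},t)=\sum_{i=0}^r w_it^i$ is the Poincar\'e polynomial of $\mathcal{A}$; while the classical theorem of Whitney identifies the face numbers of the broken circuit complex as $f_{i-1}(BC_\prec(M))=w_i$ for all $i$ (see, e.g., \cite[7.4]{B}), so the standard Stanley-Reisner Hilbert-series formula gives
\[
\mathrm{HS}(S/\mathcal{I}_\prec(M),t)\;=\;\sum_{i=0}^r f_{i-1}(BC_\prec(M))\,\frac{t^i}{(1-t)^i}\;=\;\sum_{i=0}^r w_i\left(\frac{t}{1-t}\right)^{\!i}\;=\;\pi\!\left(\mathcal{A},\tfrac{t}{1-t}\right).
\]
Hence $\mathrm{HS}(S/\mathcal{I}_\prec(M),t)=\mathrm{HS}(S/I(\mathcal{A}),t)$, as required.

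Combining the two points as above gives $\mathcal{I}_\prec(M)=\mathrm{in}_<(I(\mathcal{A}))$; in particular the initial terms of the $\partial(r_C)$ generate $\mathrm{in}_<(I(\mathcal{A}))$, so $\{\partial(r_C):C\in\mathfrak{C}(M)\}\subseteq I(\mathcal{A})$ is a Gr\"obner basis with respect to $<$, and therefore also a generating set of $I(\mathcal{A})$; since $<$ was arbitrary, it is a universal Gr\"obner basis. I expect the Hilbert-series identity to be the main obstacle: the inclusions, the leading-term computation, and the squeeze are all formal, but the equality of Hilbert series genuinely rests on combining two substantial external inputs --- Terao's formula for $\mathrm{HS}(\mathbf{C}(\mathcal{A}))$ in terms of the Poincar\'e polynomial, and Whitney's interpretation of the face numbers of $BC_\prec(M)$ as (unsigned) Whitney numbers of the first kind --- and the whole proof turns on these matching.
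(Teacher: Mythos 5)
Your argument is essentially correct, but note that it is not a variant of anything proved in this paper: the paper gives no proof of Theorem \ref{th24} at all, quoting it from Proudfoot--Speyer \cite[Theorem 4]{PS}, and its logical flow runs in the opposite direction to yours --- the identity $H_{\mathbf{C}(\mathcal{A})}(t)=H_{S/\mathcal{I}_\prec(M)}(t)$ (the second assertion of Proposition \ref{th26}) is \emph{deduced} from Theorem \ref{th24}, with Terao's formula presented as a consequence. So, as literally written, your appeal to ``Terao's computation (Proposition \ref{th26})'' would be circular inside this paper; the repair is simply to invoke Terao's original theorem \cite[Theorem 1.2]{T}, which predates \cite{PS} and is proved independently of it. With that substitution the squeeze is sound: the leading-term computation $\mathrm{in}_<(\partial(r_C))=x_{bc_\prec(C)}$ is correct (your convention $i\prec j\iff x_i<x_j$ is the reading of ``induced'' that makes $\mathcal{I}_\prec(M)$ the ideal of leading terms), the Stanley--Reisner/Bj\"orner--Whitney identification $H_{S/\mathcal{I}_\prec(M)}(t)=\pi(\mathbf{A}(M),t/(1-t))$ is exactly the first (non-circular) assertion of Proposition \ref{th26}, and equal Hilbert functions together with the inclusion of graded ideals force $\mathcal{I}_\prec(M)=\mathrm{in}_<(I(\mathcal{A}))$, whence $\{\partial(r_C):C\in\mathfrak{C}(M)\}$ is a Gr\"obner basis for the arbitrary order $<$ and hence universal. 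One further point deserves a sentence: Terao's theorem concerns the subalgebra $K[1/\alpha_1,\ldots,1/\alpha_n]$, i.e.\ $S/P$ with $P=\ker(x_i\mapsto 1/\alpha_i)$, whereas $\mathbf{C}(\mathcal{A})=S/I(\mathcal{A})$ is defined by the generators $\partial(r)$; either quote the identification $I(\mathcal{A})=P$ (the content behind \cite[Proposition 2.1]{ST}), or note that $I(\mathcal{A})\subseteq P$ is immediate and run the squeeze with the coefficientwise chain $H_{S/\mathcal{I}_\prec(M)}\geq H_{S/\mathrm{in}_<(I(\mathcal{A}))}=H_{S/I(\mathcal{A})}\geq H_{S/P}$, whose two ends agree by Bj\"orner--Whitney and Terao --- this even recovers $I(\mathcal{A})=P$ as a by-product. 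In short, your route buys a short, transparent proof at the price of taking Terao's Hilbert-series formula as an external input, while the source \cite{PS} (and hence this paper) keeps that formula as an output of the degeneration statement rather than an ingredient.
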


In particular, it follows from the above theorem that Orlik-Terao ideals are Cohen-Macaulay. These ideals are also prime, as shown in \cite[Proposition 2.1]{ST}.

We now turn to necessary results concerning Orlik-Solomon algebras of matroids. Observe that the definition of the Orlik-Solomon algebra of an arrangement depends only on its underlying matroid and thus can be extended to the matroid level. Let $M$ be a matroid on $[n]$ and let $E={K}\langle e_1,\ldots,e_n\rangle$ be a standard graded exterior algebra over a field ${K}$ (one can also replace ${K}$ by any commutative ring). The {\it Orlik-Solomon ideal} of $M$ is the ideal $J(M)\subset E$ generated by $\partial e_T$ for every dependent set $T$ of $M$. Here, for a subset $T=\{i_1,\ldots,i_m\}$ of $[n]$, we write $e_T=e_{i_1}\cdots e_{i_m}$ and $\partial e_T=\sum_{t=1}^m(-1)^{t-1}e_{T-\{i_t\}}.$  The {\it Orlik-Solomon algebra} $\mathbf{A}(M)$ of $M$ is the quotient ring $E/J(M).$ Assume now that $(M,\prec)$ is an ordered simple matroid of rank $r$. Then we have a decomposition $\mathbf{A}(M)=\bigoplus_{p=0}^r\mathbf{A}_p(M)$ as a graded ${K}$-vector space. Recall
the definition of the {\it Poincar\'{e} polynomial} of $\mathbf{A}(M)$:
$$\pi(\mathbf{A}(M),t)=\sum_{p=0}^r\dim_{K}\mathbf{A}_p(M)t^p.$$
It is known that
\begin{equation}\label{eq21}
\pi(\mathbf{A}(M),t)=\sum_{p=0}^rf_{p-1}t^p,
\end{equation}
where $f_{-1}=1$ and $(f_0,\ldots,f_{r-1})$ is the $f$-vector of the broken circuit complex $BC_\prec(M)$; see \cite[Corollary 7.10.3]{B}. This leads to the following relation between $\pi(\mathbf{A}(M),t)$ and the Hilbert series of the Stanley-Reisner ring of $BC_\prec(M)$, from which a formula of Terao for the Hilbert series of the Orlik-Terao algebra \cite[Theorm 1.2]{T} follows immediately. For a graded ${K}$-vector space $W=\bigoplus_{p\geq0}W_p$, we denote $H_W(t)=\sum_{p\geq0}\dim_{K}W_pt^p$ the Hilbert series of $W.$

\begin{proposition}\label{th26}
Let $(M,\prec)$ be an ordered simple matroid of rank $r$ on $[n]$. Let $\mathcal{I}_\prec(M)\subset S={K}[x_1,\ldots,x_n]$ be the Stanley-Reisner ideal of the broken circuit complex $BC_\prec(M)$. Then we have
$$H_{S/\mathcal{I}_\prec(M)}(t)=\pi\Big(\mathbf{A}(M),\frac{t}{1-t}\Big).$$
In particular, if $M=M(\mathcal{A})$ is the underlying matroid of a central arrangement $\mathcal{A}$ then
$$H_{\mathbf{C}(\mathcal{A})}(t)=H_{S/\mathcal{I}_\prec(M)}(t)=\pi\Big(\mathbf{A}(M),\frac{t}{1-t}\Big).$$
\end{proposition}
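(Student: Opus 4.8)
The statement has two parts: first, the Hilbert series identity $H_{S/\mathcal{I}_\prec(M)}(t) = \pi\bigl(\mathbf{A}(M), \tfrac{t}{1-t}\bigr)$; second, the specialization to $M = M(\mathcal{A})$ using the fact that $\mathbf{C}(\mathcal{A})$ and $S/\mathcal{I}_\prec(M)$ have the same Hilbert series. The second part is immediate: by Theorem~\ref{th24} we have $\mathrm{in}_\prec(I(\mathcal{A})) = \mathcal{I}_\prec(M)$, and passing to an initial ideal preserves the Hilbert function, so $H_{\mathbf{C}(\mathcal{A})}(t) = H_{S/\mathrm{in}_\prec(I(\mathcal{A}))}(t) = H_{S/\mathcal{I}_\prec(M)}(t)$. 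So the real content is the first identity, which is purely a statement about the ordered simple matroid $M$ and its broken circuit complex.

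\medskip

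\textbf{Proof of the first identity.} The plan is to compute both sides in terms of the $f$-vector $(f_{-1}, f_0, \ldots, f_{r-1})$ of $\Delta := BC_\prec(M)$, where $f_{-1} = 1$. For the left-hand side: $S/\mathcal{I}_\prec(M)$ is the Stanley-Reisner ring of $\Delta$, and the standard formula for the Hilbert series of a Stanley-Reisner ring (see e.g.\ \cite{B}) gives
$$
H_{S/\mathcal{I}_\prec(M)}(t) = \sum_{p=0}^{r} f_{p-1}\,\frac{t^p}{(1-t)^p} = \sum_{p=0}^{r} f_{p-1}\Bigl(\frac{t}{1-t}\Bigr)^p,
$$
since $\dim \Delta = r-1$ and each face $F$ of $\Delta$ of cardinality $p$ contributes the term $t^p/(1-t)^p$ (the monomials supported exactly on $F$). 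For the right-hand side: by \eqref{eq21} we have $\pi(\mathbf{A}(M), t) = \sum_{p=0}^{r} f_{p-1} t^p$, and substituting $t \mapsto t/(1-t)$ gives exactly $\sum_{p=0}^{r} f_{p-1}\bigl(t/(1-t)\bigr)^p$. The two expressions coincide, proving the identity. I would then append the one-line deduction of the second statement as above.

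\medskip

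\textbf{Where the work sits.} There is no real obstacle here; the proposition is a bookkeeping consequence of three known facts: the Stanley-Reisner Hilbert series formula, the identity \eqref{eq21} relating the Orlik-Solomon Poincaré polynomial to the $f$-vector of the broken circuit complex, and Theorem~\ref{th24} of Proudfoot-Speyer. The only point demanding a little care is making sure the indexing conventions line up --- that $\dim_K \mathbf{A}_p(M) = f_{p-1}$ with the convention $f_{-1} = 1$ matching $\mathbf{A}_0(M) = K$, and that the Stanley-Reisner series is written with the summation index matching face cardinalities rather than face dimensions. Once these conventions are fixed consistently, both sides are manifestly the same polynomial in $t/(1-t)$ with coefficients $f_{p-1}$, and the proof is complete. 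The remark that Terao's Hilbert series formula \cite[Theorem 1.2]{T} follows immediately is then just the observation that $\pi(\mathbf{A}(M), t)$ is (up to the standard sign normalization) the Poincaré polynomial of the arrangement.
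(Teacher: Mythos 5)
Your proposal is correct and follows essentially the same route as the paper: both sides are expressed via the $f$-vector of $BC_\prec(M)$, using the standard Stanley--Reisner Hilbert series formula together with \eqref{eq21}, and the second assertion is deduced from Theorem~\ref{th24} since passing to the initial ideal preserves the Hilbert function. No gaps; this matches the paper's proof.
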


\begin{proof}
Let $(f_0,\ldots,f_{r-1})$ be the $f$-vector of the complex $BC_\prec(M)$ and let $f_{-1}=1$. Then it is well-known that
$H_{S/\mathcal{I}_\prec(M)}(t)=\sum_{p=0}^rf_{p-1}\Big(\frac{t}{1-t}\Big)^i$;
see, e.g., \cite[Proposition 6.2.1]{HH}. This, together with (\ref{eq21}), implies the first assertion of the proposition. The second one follows from the first one and Theorem \ref{th24}.
\end{proof}

We conclude this section with a quick review of the chromatic polynomial of a graph. Let $G$ be a simple graph on $\ell$ vertices. For each positive integer $t$, let $\chi(G,t)$ be the number of colorings of $G$ with $t$ colors. This function is a polynomial, called the {\it chromatic polynomial} of $G$. Let $M(G)$ be the cycle matroid of $G$ (see Example \ref{ex1}(iii)) and let $\prec$ be an ordering of the edge set of $G$. A classical theorem of Whitney \cite{Wh} (see also the exposition of Wilf \cite{W}) says that
$$\chi(G,t)=t^\ell-a_1t^{\ell-1}+a_2t^{\ell-2}-\cdots+(-1)^{\ell-1}a_{\ell-1}t,$$
where $(a_1,\ldots,a_r)=(f_0,\ldots,f_{r-1})$ is the $f$-vector of the broken circuit complex $BC_\prec(M(G))$ ($r$ is the rank of $M(G)$) and $a_i=0$ for $i>r.$ By (\ref{eq21}), one can rewrite $\chi(G,t)$ as follows
$$\chi(G,t)=\sum_{p=0}^r(-1)^pf_{p-1}t^{\ell-p}=t^\ell\sum_{p=0}^rf_{p-1}(-t)^{-p}=t^\ell\pi(\mathbf{A}(M(G)),-t^{-1}).$$
Thus we have the well-known result (which is also a consequence of \cite[Corollary 7.10.3]{B}):
\begin{corollary}\label{th27}
Let $G$ be a simple graph on $\ell$ vertices. Then
$\chi(G,t)=t^\ell\pi(\mathbf{A}(M(G)),-t^{-1}).$
\end{corollary}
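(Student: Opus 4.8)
The plan is to obtain the identity by rewriting Whitney's formula, recalled just above, in the language of the Orlik-Solomon algebra. Fix any ordering $\prec$ of the edge set of $G$ and let $(f_0,\dots,f_{r-1})$ be the $f$-vector of $BC_\prec(M(G))$, with $f_{-1}=1$ and $f_{p-1}=0$ for $p>r$, where $r$ is the rank of $M(G)$. Whitney's theorem gives
\[
\chi(G,t)=\sum_{p=0}^{\ell}(-1)^p f_{p-1}\,t^{\ell-p}=\sum_{p=0}^{r}(-1)^p f_{p-1}\,t^{\ell-p},
\]
the truncation at $p=r$ being legitimate since $f_{p-1}=0$ once $p$ exceeds $r$. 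First I would factor out $t^\ell$ and substitute $t\mapsto -t^{-1}$, using $t^\ell(-t^{-1})^p=(-1)^p t^{\ell-p}$, to rewrite the right-hand side as $t^\ell\sum_{p=0}^{r} f_{p-1}(-t^{-1})^p$.

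Next I would invoke the identity (\ref{eq21}), which says $\dim_{K}\mathbf{A}_p(M(G))=f_{p-1}$ for $0\le p\le r$; by the definition of the Poincar\'e polynomial this means $\pi(\mathbf{A}(M(G)),s)=\sum_{p=0}^{r}f_{p-1}s^p$. Evaluating at $s=-t^{-1}$ and comparing with the display above yields $\chi(G,t)=t^\ell\pi(\mathbf{A}(M(G)),-t^{-1})$, as claimed. Note in passing that the right-hand side is genuinely a polynomial of degree $\ell$ in $t$, since every exponent $\ell-p$ with $0\le p\le r\le\ell$ is non-negative, and that it does not depend on $\prec$ because $\chi(G,t)$ does not.

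There is essentially no real obstacle here: the statement is a formal rephrasing of a classical fact. The only things to watch are the index bookkeeping (truncating the sum at $r$ and tracking the sign coming from $t^\ell(-t^{-1})^p$) and the input, used through Whitney's theorem, that the $f$-vector of $BC_\prec(M(G))$ governs the coefficients of $\chi(G,t)$ in the first place — a point one could equally well quote directly from \cite[Corollary 7.10.3]{B}. Once that is granted, the corollary follows by direct substitution.
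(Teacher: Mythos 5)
Your argument is correct and coincides with the paper's own derivation: the corollary is obtained exactly by combining Whitney's theorem with the identity (\ref{eq21}) identifying $f_{p-1}$ with $\dim_{K}\mathbf{A}_p(M(G))$, factoring out $t^\ell$ and substituting $-t^{-1}$. The bookkeeping you note (truncating at $p=r$ and the sign from $t^\ell(-t^{-1})^p$) is handled the same way in the paper, so there is nothing to add.
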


\section{Cohen-Macaulay ideals and linear resolutions}

Orlik-Solomon ideals admitting a linear free resolution were first characterized by Eisenbud, Popescu and Yuzvinsky \cite[Corollary 3.6]{EPY}. This result was then extended to matroids by K\"{a}mpf and R\"{o}mer \cite[Theorem 6.11]{KR}. In this section, we characterize Orlik-Terao ideals which have a linear resolution. This will be done first for the Stanley-Reisner ideal of the broken circuit complex of a matroid. Our characterizations are similar to those in \cite{EPY}, \cite{KR}.

Recall that $S={K}[x_1,\ldots,x_n]$ is a standard graded polynomial ring over a field ${K}$. Throughout this section, ${K}$ is assume to be infinite. A finitely generated graded $S$-module $W$ is said to have a $p$-{\it linear resolution} if the graded minimal free resolution of $W$ is of the form
$$0\rightarrow S(-p-m)^{\beta_m}\rightarrow\cdots\rightarrow S(-p-1)^{\beta_1}\rightarrow S(-p)^{\beta_0}\rightarrow W\rightarrow0.$$
The following characterization of Cohen-Macaulay ideals with linear resolution is essentially due to Cavaliere, Rossi and Valla \cite[Proposition 2.1]{CRV} (see also Renter\'{i}a and Villarreal \cite[Theorem 3.2]{RV}). We present here another proof for later use.

\begin{proposition}\label{th1}
 Let $I=\bigoplus_{j\geq 0} I_j$ be a graded Cohen-Macaulay ideal in $S$ of codimension $h$. Assume $p$ is the smallest integer such that $I_p\ne 0$. Then the following conditions are equivalent:
\begin{enumerate}
\item
$I$ has a $p$-linear resolution;
\item
For any maximal $S/I$-regular sequence $y_1,\ldots,y_{n-h}$ of linear forms in $S$, we have $\bar I=\overline{\mathfrak{m}}^p$, where $\bar I$ and $\overline{\mathfrak{m}}$ are respectively the image of $I$ and the maximal graded ideal $\mathfrak{m}=(x_1,\ldots,x_n)$ in $S/(y_1,\ldots,y_{n-h})$;
\item
$H(I,p)=\dbinom{p+h-1}{p}$, where $H(I,\lpnt)$ denotes the Hilbert function of $I.$
\end{enumerate}
\end{proposition}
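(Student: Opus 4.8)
The plan is to reduce modulo a maximal $S/I$-regular sequence of linear forms, thereby translating the statement into an assertion about a power of the graded maximal ideal of a polynomial ring. Since ${K}$ is infinite and $S/I$ is Cohen--Macaulay of codimension $h$, a maximal $S/I$-regular sequence $y_1,\ldots,y_{n-h}$ consisting of linear forms exists, and every such sequence has length $n-h=\operatorname{depth}(S/I)$. Fix one; because $y_1,\ldots,y_{n-h}$ are linearly independent, $\bar S:=S/(y_1,\ldots,y_{n-h})$ is a polynomial ring in $h$ variables, the image $\overline{\mathfrak{m}}$ of $\mathfrak{m}$ is its graded maximal ideal, and $\bar S/\bar I$ is Artinian. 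I would first record two standard consequences of the regularity of the sequence. \emph{(a)} Cutting by $y_1,\ldots,y_{n-h}$ preserves graded Betti numbers, so $I$ has a $p$-linear resolution over $S$ if and only if $\bar I$ has one over $\bar S$. \emph{(b)} $H_{\bar I}(t)=(1-t)^{n-h}H_I(t)$, and since $I_j=0$ for $j<p$ this yields $H(\bar I,p)=H(I,p)$. Moreover $\bar I\subseteq\overline{\mathfrak{m}}^p$ because $\bar I$ has no components in degrees below $p$, and, as $\overline{\mathfrak{m}}^p$ is generated in degree $p$, this inclusion is an equality if and only if $\dim_{K}\bar I_p=\dim_{K}\bar S_p=\binom{p+h-1}{h-1}=\binom{p+h-1}{p}$.

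With these in hand, $(ii)\Leftrightarrow(iii)$ is immediate: $\bar I=\overline{\mathfrak{m}}^p$ holds if and only if $H(\bar I,p)=\binom{p+h-1}{p}$, i.e.\ if and only if $H(I,p)=\binom{p+h-1}{p}$, and the latter condition does not refer to the chosen sequence, so the validity of $(ii)$ is independent of it. For $(ii)\Rightarrow(i)$ I would invoke the classical fact that $\overline{\mathfrak{m}}^p$ has a $p$-linear resolution over $\bar S$ (it even has linear quotients; see, e.g., \cite{HH}); then $\bar I=\overline{\mathfrak{m}}^p$ has a $p$-linear resolution and, by \emph{(a)}, so does $I$.

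For $(i)\Rightarrow(ii)$ I would argue via Castelnuovo--Mumford regularity. If $I$ has a $p$-linear resolution, then so does $\bar I$ over $\bar S$ by \emph{(a)}, whence $\operatorname{reg}(\bar I)=p$, and therefore $\operatorname{reg}(\bar S/\bar I)=p-1$ (the graded minimal free resolution of $\bar S/\bar I$ is obtained from that of $\bar I$ by a homological shift together with the free cover $\bar S$). Since $\bar S/\bar I$ is Artinian, its regularity equals its largest nonvanishing degree, so $(\bar S/\bar I)_j=0$ for all $j\ge p$, that is, $\bar I_j=\bar S_j$ for $j\ge p$; combined with $\bar I_j=0$ for $j<p$ this forces $\bar I=\overline{\mathfrak{m}}^p$, which is $(ii)$. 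One may assume throughout that $I$ is a proper ideal, so $p\ge 1$; the case $I=S$ is trivial.

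I expect the main obstacle to be the bookkeeping behind \emph{(a)}: one must check that passing to $\bar S$ carries the graded \emph{minimal} free resolution of $I$ to the graded minimal free resolution of $\bar I$ --- not merely the Betti numbers up to some unknown shift --- so that ``being $p$-linear'' really transfers in both directions. This rests on the base-change isomorphism $\operatorname{Tor}^{S}_i(S/I,{K})\cong\operatorname{Tor}^{\bar S}_i(\bar S/\bar I,{K})$, which is valid because each $y_t$ is a nonzerodivisor on both $S$ and $S/I$ and is applied one linear form at a time; tracking the internal degrees is the delicate point, and it is precisely what makes conditions $(i)$ and $(ii)$ refer to the same integer $p$.
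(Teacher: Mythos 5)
Your proposal is correct and takes essentially the same route as the paper: reduce modulo a maximal $S/I$-regular sequence of linear forms, use the regularity of the artinian quotient (equal to its top nonvanishing degree) for (i)$\Rightarrow$(ii), the linear quotients of $\overline{\mathfrak{m}}^p$ for (ii)$\Rightarrow$(i), and the comparison $H(I,p)=H(\bar I,p)\leq\binom{p+h-1}{p}$ with equality iff $\bar I=\overline{\mathfrak{m}}^p$ for (ii)$\Leftrightarrow$(iii). The only difference is that you make explicit the transfer of graded Betti numbers and the Hilbert-series identity under reduction, which the paper uses implicitly via $\frac{I+(y)}{(y)}\cong I/yI$.
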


\begin{proof}
Note that there always exists a maximal $S/I$-regular sequence of linear forms in $S$ as the coefficient field ${K}$ is infinite; see, e.g., \cite[Proposition 1.5.12]{BH2}.

(i)$\Rightarrow$(ii): By factoring out the sequence $y_1,\ldots,y_{n-h}$, it is possible to assume that $S/I$ is an artinian ring. Then we have the following formula for the regularity  of this ring:
$$ \mathrm{reg}(S/I)=\max\{i: (S/I)_i\ne 0\};$$
see, e.g., \cite[Theorem 18.4]{P}. On the other hand, since $I$ admits a $p$-linear resolution,  it is well-known that
$$\mathrm{reg}(S/I)=\mathrm{reg}(I)-1=p-1;$$
see, e.g., \cite[Proposition 18.2]{P}. Thus we obtain $\max\{i: (S/I)_i\ne 0\}=p-1$, which simply means that $I=\mathfrak{m}^p.$

(ii)$\Rightarrow$(i): Since $\overline{\mathfrak{m}}^p$ has linear quotients, it admits a $p$-linear resolution; see, e.g., \cite[Proposition 8.2.1]{HH}. It follows that $\bar I$, and thus $I$, also admits a $p$-linear resolution.

(ii)$\Leftrightarrow$(iii): Note that if a linear form $y\in S$ is a nonzero divisor on $S/I$ then
$$\frac{I+(y)}{(y)}\cong \frac{I}{(y)\cap I}=\frac{I}{y I}=\frac{\bigoplus_{j\geq p} I_j}{\bigoplus_{j\geq p} yI_j}.$$
In particular, $H(I,p)=H\Big(\frac{I+(y)}{(y)},p\Big)$. Now since $y_1,\ldots,y_{n-h}$ is an $S/I$-regular sequence and $\bar I\subseteq\overline{\mathfrak{m}}^p$ we have
$$
H(I,p)=H(\bar I,p)\le H(\overline{\mathfrak{m}}^p,p)=\dbinom{p+h-1}{p},
$$
with equality if and only if $\bar I=\overline{\mathfrak{m}}^p.$ Note that the last equality in the above equation follows from the fact that $S/(y_1,\ldots,y_{n-h})$ is a polynomial ring in $h$ variables over ${K}$.
\end{proof}

\begin{corollary}\label{co1}
Let $I$ be a graded ideal in $S$ and $\prec$ a monomial order on $S$. Assume that $\mathrm{in}_\prec(I)$ is Cohen-Macaulay. Then $I$ has a linear resolution if and only if $\mathrm{in}_\prec(I)$ has one.
\end{corollary}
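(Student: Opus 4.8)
The plan is to reduce both directions of the equivalence to the single numerical condition (iii) of Proposition \ref{th1}. Write $J=\mathrm{in}_\prec(I)$. The fact to start from is that passing to an initial ideal preserves the Hilbert function, so $H(I,j)=H(J,j)$ for every $j$. In particular $I$ and $J$ have the same dimension, hence the same codimension $h$, and the smallest integer $p$ with $I_p\ne 0$ equals the smallest integer with $J_p\ne 0$.

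Next I would check that $I$ is itself Cohen-Macaulay, which is needed before Proposition \ref{th1} can be applied to $I$. This is the standard behaviour of depth under a Gr\"obner degeneration: since $S/I$ degenerates flatly to $S/J$, one has $\mathrm{depth}(S/I)\ge\mathrm{depth}(S/J)$; as $\dim(S/I)=\dim(S/J)$ and $S/J$ is Cohen-Macaulay by hypothesis, this forces $\mathrm{depth}(S/I)=\dim(S/I)$, i.e.\ $S/I$ is Cohen-Macaulay.

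Finally, note that a graded ideal admitting a linear resolution has all of its minimal generators in one degree, which is necessarily the least degree $p$ in which the ideal is nonzero; hence, for each of $I$ and $J$, ``having a linear resolution'' means exactly ``having a $p$-linear resolution'' in the sense of Proposition \ref{th1}, with the same value of $p$ in both cases. Applying the equivalence (i)$\Leftrightarrow$(iii) of that proposition to the Cohen-Macaulay ideals $I$ and $J$, both of codimension $h$: $I$ has a $p$-linear resolution precisely when $H(I,p)=\binom{p+h-1}{p}$, and $J$ has one precisely when $H(J,p)=\binom{p+h-1}{p}$. Since $H(I,p)=H(J,p)$, these two conditions coincide, which is the assertion.

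The one ingredient that goes beyond Hilbert-function bookkeeping and Proposition \ref{th1} is the descent of the Cohen-Macaulay property from $J$ to $I$, so that is the step I would treat most carefully (citing a standard reference on Gr\"obner degenerations); the remainder is routine. I would also record in passing that the standing assumption of this section that ${K}$ is infinite is exactly what makes Proposition \ref{th1} applicable here.
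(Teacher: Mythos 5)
Your proposal is correct and follows essentially the same route as the paper: one first observes that $I$ is Cohen-Macaulay because $\mathrm{in}_\prec(I)$ is (the paper cites \cite[Corollary 3.3.5]{HH} for the depth comparison you describe), and then applies the equivalence (i)$\Leftrightarrow$(iii) of Proposition \ref{th1} to both ideals, which have the same codimension and Hilbert function. The only difference is that you spell out the bookkeeping (equality of the initial degree $p$, linearity forcing generation in a single degree) that the paper leaves implicit.
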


\begin{proof}
Note that $I$ is also a Cohen-Macaulay ideal (see, e.g., \cite[Corollary 3.3.5]{HH}). The corollary now follows from the equivalent of conditions (i) and (iii) in Proposition \ref{th1} since $I$ and $\mathrm{in}_\prec(I)$ have the same codimension and Hilbert function.
\end{proof}

\subsection{Stanley-Reisner ideals of broken circuit complexes}

Let $(M,\prec)$ be an ordered simple matroid of rank $r$ on $[n]$. Let $\mathcal{I}_\prec(M)\subset S$ be the Stanley-Reisner ideal of the broken circuit complex $BC_\prec(M)$. Those matroids $M$ whose $\mathcal{I}_\prec(M)$ admits a linear resolution are characterized in the following theorem. For Orlik-Solomon ideals, a similar characterization can be found in \cite[Theorem 6.11]{KR}. In fact, one can prove the theorem by utilizing \cite[Theorem 6.11]{KR} and \cite[Corollary 2.2]{AAH}. However, we present here a somewhat more direct proof which does not involve exterior algebras.

\begin{theorem}\label{lm1}
Let $(M,\prec)$ be an ordered simple matroid of rank $r$ on $[n]$ and let $\mathcal{I}_\prec(M)$ be the Stanley-Reisner ideal of the broken circuit complex of $M$. Then the following conditions are equivalent:
\begin{enumerate}
\item $\mathcal{I}_\prec(M)$ has a $p$-linear resolution;
\item $2\leq p\leq r$ and $M$ is isomorphic to $U_{p,n-r+p}\oplus U_{r-p,r-p}$.
\end{enumerate}
\end{theorem}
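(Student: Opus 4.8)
My plan is to prove the two implications separately, the main tool being Proposition~\ref{th1}. Since $BC_\prec(M)$ is shellable, $\mathcal{I}_\prec(M)$ is Cohen--Macaulay of codimension $n-r$, and its least nonzero degree $p$ is the smallest size of a broken circuit of $M$; as $M$ is simple (circuits have size between $3$ and $r+1$) we automatically get $2\le p\le r$. Moreover $\dim_K\mathcal{I}_\prec(M)_p$ equals the number of broken circuits of $M$ of size $p$: every generator $x_{bc_\prec(C)}$ has degree $|C|-1\ge p$, so any degree-$p$ monomial in the ideal is itself such a generator with $|bc_\prec(C)|=p$. Hence, by Proposition~\ref{th1}, the equality $H(\mathcal{I}_\prec(M),p)=\binom{p+n-r-1}{p}$ — i.e. $M$ having exactly $\binom{p+n-r-1}{p}$ broken circuits of size $p$ — is equivalent to $\mathcal{I}_\prec(M)$ having a $p$-linear resolution; and in that case the resolution being $p$-linear also forces all minimal generators, hence all minimal broken circuits, to have size $p$, so $M$ has girth exactly $p+1$.

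For (ii)$\Rightarrow$(i): if $M\cong U_{p,n-r+p}\oplus U_{r-p,r-p}$, the circuits of $M$ are the $(p+1)$-subsets of the ground set $E_1$, $|E_1|=n-r+p$, of the uniform summand, and $C\setminus\min_\prec(C)$ ranges exactly over the $p$-subsets of $E_1\setminus\{\min_\prec E_1\}$. Thus $\mathcal{I}_\prec(M)$ is generated by all squarefree monomials of degree $p$ in $n-r+p-1$ of the variables; up to a harmless extension by the remaining variables this is the Stanley--Reisner ideal of the $(p-2)$-skeleton of a simplex, which has a $p$-linear resolution (it has linear quotients; see \cite[Proposition~8.2.1]{HH}). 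The hypothesis $2\le p\le r$ is automatic since $M$ must be simple.

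For (i)$\Rightarrow$(ii): assume $\mathcal{I}_\prec(M)$ has a $p$-linear resolution, so $M$ has girth $p+1$. First split off the coloops: writing $M=M'\oplus U_{s,s}$ with $M'$ the restriction of $M$ to its non-coloops, the broken circuits of $M$ and of $M'$ coincide, so $\mathcal{I}_\prec(M)=\mathcal{I}_\prec(M')S$ and $\mathcal{I}_\prec(M')$ again has a $p$-linear resolution; here $M'$ is simple, coloop-free, of rank $r':=r-s$ on $n':=n-s$ elements with $n'-r'=n-r$. Next, $M'$ is connected: if $M'=M_1\oplus M_2$ with both factors nonempty, each $M_i$ is simple and coloop-free, hence not free, so $\mathcal{I}_\prec(M_i)\ne0$ is generated in degree $\ge p\ge2$, and since $\mathcal{I}_\prec(M')$ is the sum of the ideals $\mathcal{I}_\prec(M_i)$ in disjoint variable sets, regularity is additive, giving $\operatorname{reg}\!\big(S'/\mathcal{I}_\prec(M')\big)\ge 2(p-1)>p-1$ and contradicting that a $p$-linear resolution forces $\operatorname{reg}\!\big(S'/\mathcal{I}_\prec(M')\big)=p-1$.

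It then suffices to prove $r'=p$: for then $M'$ has rank $p$ and girth $p+1$, so its independent sets are exactly the subsets of size $\le p$, i.e. $M'\cong U_{p,n'}$, and since $s=r-p$ and $n'=n-r+p$ we obtain $M\cong U_{p,n-r+p}\oplus U_{r-p,r-p}$. To get $r'=p$ I would compare two computations of the $h$-vector of $BC_\prec(M')$. On one hand, $S'/\mathcal{I}_\prec(M')$ is Cohen--Macaulay with $\operatorname{reg}=p-1$, so its $h$-polynomial has degree $p-1$ and $h_i\big(BC_\prec(M')\big)=0$ for all $i\ge p$. On the other hand, the $f$-vector of $BC_\prec(M')$ is the sequence of unsigned Whitney numbers of the first kind of $M'$, independent of $\prec$ (cf.\ \cite[Chapter~7]{B}), and $h_{r'-1}\big(BC_\prec(M')\big)$ equals Crapo's $\beta$-invariant of $M'$, which is strictly positive since $M'$ is connected and nontrivial. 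Hence $r'-1<p$, i.e. $p\ge r'$; combined with $p\le r'$ (every broken circuit of $M'$ has size $\le r'$) this yields $r'=p$. The step I expect to be the main obstacle is precisely this deduction of $r'=p$: attempting to show directly, via circuit elimination (Theorem~\ref{lm7}), that every circuit of $M'$ has size $p+1$ degenerates into a thicket of cases, whereas the $h$-vector argument above is clean but leans on two classical inputs that must be cited with care — Whitney's description of the $f$-vector of the broken circuit complex, and Crapo's theorem that $\beta(M)>0$ exactly when $M$ is connected. A secondary, more routine point is the additivity of Castelnuovo--Mumford regularity over tensor products in disjoint variables, used to reduce to the connected coloop-free case.
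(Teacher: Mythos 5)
Your proof is correct, and the easy direction (ii)$\Rightarrow$(i) coincides with the paper's (identify the ideal with a squarefree Veronese ideal and use linear quotients). For the hard direction (i)$\Rightarrow$(ii), however, you take a genuinely different route. The paper stays entirely on the level of enumerative data: from Proposition~\ref{th1} it reads off that the artinian reduction is $K[z_1,\ldots,z_{n-r}]/(z_1,\ldots,z_{n-r})^p$, computes the $h$-vector and then the $f$-vector of $BC_\prec(M)$ explicitly, observes that the girth is $p+1$, and invokes Bj\"orner's extremal result \cite[Proposition 7.5.6]{B}, which says that a matroid of given size, rank and girth attaining the minimal Whitney numbers must be $U_{p,n-r+p}\oplus U_{r-p,r-p}$ --- so the whole structural conclusion is outsourced to that citation. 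You instead re-derive the structural statement by hand: split off coloops, force connectedness of the remaining part by additivity of regularity over disjoint sets of variables, and then pin down the rank of the coloop-free part by playing the vanishing $h_i=0$ for $i\geq p$ (Cohen--Macaulayness plus $\operatorname{reg}=p-1$) against the identity $h_{r'-1}=\beta(M')>0$ for a connected matroid, after which uniformity follows from the girth. What the paper's approach buys is brevity, at the price of leaning on Bj\"orner's extremal characterization; what yours buys is independence from that result, at the price of two classical inputs you rightly flag --- Whitney's description of the $f$-vector (ordering-independent), and the facts that the next-to-top $h$-entry of the broken circuit complex is Crapo's $\beta$ and that $\beta>0$ exactly for connected matroids (both available, e.g., via \cite{BrOx} or a two-line computation from the characteristic polynomial) --- plus the standard additivity of regularity for tensor products in disjoint variables. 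All the individual steps check out (in particular the identification of the broken circuits of $U_{p,n-r+p}\oplus U_{r-p,r-p}$ with the $p$-subsets of $E_1$ avoiding its $\prec$-least element, the reduction $\mathcal{I}_\prec(M)=\mathcal{I}_\prec(M')S$, and the deduction $M'\cong U_{p,n'}$ from rank $p$ and girth $p+1$), so no gap remains once those classical facts are cited precisely.
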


\begin{proof}
(ii)$\Rightarrow$(i): Assume $M$ is isomorphic to $U_{p,n-r+p}\oplus U_{r-p,r-p}$. Then after renumbering the variables (if necessary) we get
$$\mathcal{I}_\prec(M)=(x_{i_1}\cdots x_{i_p}: 1\le i_1<\cdots< i_p< n-r+p).$$
This ideal clearly has linear quotients, and consequently, it has a linear resolution.

(i)$\Rightarrow$(ii): Assume $\mathcal{I}_\prec(M)$ has a $p$-linear resolution. Evidently, $2\leq p\leq r$ as $M$ is simple. Recall that the ring $S/\mathcal{I}_\prec(M)$ is Cohen-Macaulay of dimension $r.$ Let ${\bf y}=y_1,\ldots,y_r$ be a maximal $S/\mathcal{I}_\prec(M)$-regular sequence of linear forms in $S$. Denote by $\bar{\mathcal{I}}_\prec(M)$ the image of $\mathcal{I}_\prec(M)$ in $\bar{S}=S/(\bf{y})$. It follows from Proposition \ref{th1} that
$$R:=S/\big(\mathcal{I}_\prec(M)+({\bf y})\big)\cong\bar{S}/\bar{\mathcal{I}}_\prec(M)=\bar{S}/\overline{\mathfrak{m}}^p\cong \frac{{K}[z_1,\ldots,z_{n-r}]}{(z_1,\ldots,z_{n-r})^p},$$
where $\mathfrak{m}=(x_1,\ldots,x_n)$ and $z_1,\ldots,z_{n-r}$ are variables. Since $\bf y$ is an $S/\mathcal{I}_\prec(M)$-sequence, one gets the following relation between the Hilbert series of $S/\mathcal{I}_\prec(M)$ and $R$:
$$H_{S/\mathcal{I}_\prec(M)}(t)=H_R(t)/(1-t)^r.$$
The $h$-vector $(h_0,\ldots,h_r)$ of $S/\mathcal{I}_\prec(M)$ is now computable:
$$h_k=H(R,k)=
\begin{cases}
\binom{n-r+k-1}{k}&\ \text{for}\ 0\le k\le p-1\\
0 &\ \text{for}\ p\le k\le r,
\end{cases}$$
where $H(R,\lpnt)$ denotes the Hilbert function of $R$. This yields the following formula for the $f$-vector $(f_0,\ldots,f_{r-1})$ of $S/\mathcal{I}_\prec(M)$:
$$
f_{k-1}=\sum_{i=0}^rh_i\binom{r-i}{k-i}=\sum_{i=0}^{p-1}\binom{n-r+i-1}{i}\binom{r-i}{k-i} \ \text{for}\ k=0,\ldots,r.
$$
Note that $c=p+1$ is the smallest size of a circuit of $M$ since $\mathcal{I}_\prec(M)$ is generated by monomials of degree $p$. So by \cite[Proposition 7.5.6]{B}, the $f$-vector of $S/\mathcal{I}_\prec(M)$ attains its minimum and this forces  $M$ to be isomorphic to $U_{p,n-r+p}\oplus U_{r-p,r-p}$.
\end{proof}

\begin{corollary}
With the assumption of Theorem \ref{lm1}, if $\mathcal{I}_\prec(M)$ has a linear resolution, then so do all of its powers.
\end{corollary}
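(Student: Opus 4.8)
The plan is to use Theorem~\ref{lm1} to identify $\mathcal{I}_\prec(M)$ explicitly and then appeal to the theory of polymatroidal ideals. Suppose $\mathcal{I}_\prec(M)$ has a linear resolution. By Theorem~\ref{lm1} it has a $p$-linear resolution for some $2\le p\le r$ and $M\cong U_{p,n-r+p}\oplus U_{r-p,r-p}$; as observed in the proof of that theorem, after renumbering the variables one has $\mathcal{I}_\prec(M)=\big(x_{i_1}\cdots x_{i_p}:1\le i_1<\cdots<i_p\le N\big)$ with $N=n-r+p-1$, i.e.\ $\mathcal{I}_\prec(M)$ is the squarefree Veronese ideal generated by all squarefree monomials of degree $p$ in $x_1,\ldots,x_N$.

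The first step is to record that this ideal is \emph{polymatroidal}: its minimal monomial generators $x_{i_1}\cdots x_{i_p}$ correspond to the $p$-subsets of $\{1,\ldots,N\}$, which are exactly the bases of the uniform matroid $U_{p,N}$, and the exchange property defining polymatroidal ideals then follows immediately from the basis exchange axiom of $U_{p,N}$. The second step is to invoke two classical facts about polymatroidal ideals, both contained in \cite[Chapter~12]{HH}: a product of polymatroidal ideals is again polymatroidal, so $\mathcal{I}_\prec(M)^k$ is polymatroidal for every $k\ge1$; and every polymatroidal ideal has linear quotients, hence, being generated in a single degree, a linear resolution by \cite[Proposition~8.2.1]{HH}. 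Putting these together, $\mathcal{I}_\prec(M)^k$ has a $pk$-linear resolution for all $k\ge1$, which is the assertion.

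I do not expect a genuine obstacle: the facts about polymatroidal ideals are standard, and the only things requiring (routine) care are the verification of the exchange property for the squarefree Veronese ideal and the remark that ``linear resolution'' for $\mathcal{I}_\prec(M)^k$ must be read as ``$pk$-linear resolution'', consistent with the generating degree $pk$. One could instead avoid the polymatroidal machinery and exhibit a linear-quotients order on the generators of $\mathcal{I}_\prec(M)^k$ directly, but this is longer and less transparent. Finally, it is worth stressing that the statement really uses the rigidity provided by Theorem~\ref{lm1}: for monomial ideals generated in degrees $>2$, having a linear resolution is not in general inherited by powers, so no appeal to a general principle would suffice.
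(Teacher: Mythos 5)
Your proposal is correct and follows essentially the same route as the paper: both use Theorem \ref{lm1} to identify $\mathcal{I}_\prec(M)$ as a squarefree Veronese ideal and then invoke the Herzog--Hibi results (the paper cites \cite[Corollary 12.6.4]{HH} directly) to conclude that all its powers have linear quotients, hence linear resolutions. Your extra detour through the polymatroidal exchange property is just an unpacking of that same citation, not a different argument.
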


\begin{proof}
By Theorem \ref{lm1}, if $\mathcal{I}_\prec(M)$ has a linear resolution, then it is a so-called squarefree Veronese ideal. It is known that all powers of this ideal have linear quotients; see \cite[Corollary 12.6.4]{HH}. Therefore, they all have a linear resolution.
\end{proof}

\subsection{Orlik-Terao ideals}

Return to our assumption in the introduction: $\mathcal{A}$ is an essential central arrangement of $n$ hyperplanes in an $r$-dimensional vector space $V$ over ${K}$. Let $M(\mathcal{A})$ be the underlying matroid and $I(\mathcal{A})\subset S$ the Orlik-Terao ideal of $\mathcal{A}$. We refer to \cite[Definition 1.15]{OT} for the coning construction of an arrangement. Characterizations of arrangements whose Orlik-Terao ideal has a linear resolution are given below. It turns out that this property of the Orlik-Terao ideal is combinatorial and holds for ``almost all" arrangements.

\begin{theorem}\label{th34}
 For an essential central arrangement $\mathcal{A}$ of $n$ hyperplanes in a vector space of dimension $r$, the following conditions are equivalent:
\begin{enumerate}
\item $I(\mathcal{A})$ has a $p$-linear resolution;
\item $2\leq p\leq r$ and $M(\mathcal{A})$ is isomorphic to $U_{p,n-r+p}\oplus U_{r-p,r-p}$;
\item $2\leq p\leq r$ and $\mathcal{A}=\mathcal{A}_1\times \mathcal{A}_2$, where $\mathcal{A}_1$ is a generic central arrangement of $n-r+p$ hyperplanes in a $p$-dimensional vector space and $\mathcal{A}_2$ is a Boolean arrangement in an $(r-p)$-dimensional vector space;
\item $2\leq p\leq r$ and $\mathcal{A}$ is obtained by successively coning a generic central arrangement of $n-r+p$ hyperplanes in a $p$-dimensional vector space.
\end{enumerate}
\end{theorem}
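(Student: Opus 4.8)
The plan is to establish the cycle of implications by reducing each one to results already in hand. The heart of the matter, (i)$\Leftrightarrow$(ii), passes through the Stanley--Reisner ideal of the broken circuit complex. By Theorem \ref{th24}, for any ordering $\prec$ of $[n]$ and any induced monomial order one has $\mathrm{in}_\prec(I(\mathcal{A}))=\mathcal{I}_\prec(M(\mathcal{A}))$, and this initial ideal is Cohen--Macaulay (it is the Stanley--Reisner ideal of the shellable complex $BC_\prec(M(\mathcal{A}))$). Hence Corollary \ref{co1} applies and gives that $I(\mathcal{A})$ has a linear resolution if and only if $\mathcal{I}_\prec(M(\mathcal{A}))$ does; since the two ideals have the same Hilbert function they have the same initial degree $p$, so one has a $p$-linear resolution exactly when the other does. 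As $M(\mathcal{A})$ is a simple matroid of rank $r$ on $[n]$, Theorem \ref{lm1} then identifies this with the condition $2\le p\le r$ and $M(\mathcal{A})\cong U_{p,n-r+p}\oplus U_{r-p,r-p}$, which is (ii).

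Next I would treat (ii)$\Leftrightarrow$(iii). The implication (iii)$\Rightarrow$(ii) is immediate from Example \ref{ex1}: a generic central arrangement of $n-r+p$ hyperplanes in a $p$-dimensional space has underlying matroid $U_{p,n-r+p}$, a Boolean arrangement in an $(r-p)$-dimensional space has the free matroid $U_{r-p,r-p}$, and $M(\mathcal{A}_1\times\mathcal{A}_2)=M(\mathcal{A}_1)\oplus M(\mathcal{A}_2)$. For the converse I would show that a direct sum decomposition of the underlying matroid forces $\mathcal{A}$ itself to split as a product. Write $[n]=E_1\sqcup E_2$ according to $M(\mathcal{A})\cong M_1\oplus M_2$ with $M_1=U_{p,n-r+p}$ and $M_2=U_{r-p,r-p}$, and set $W_j=\mathrm{span}_{K}\{\alpha_i:i\in E_j\}\subseteq V^*$. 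Then $\dim W_j=\mathrm{rank}(M_j)$, while essentiality of $\mathcal{A}$ gives $W_1+W_2=V^*$ with $\dim V^*=r=\mathrm{rank}(M_1)+\mathrm{rank}(M_2)$, so $V^*=W_1\oplus W_2$. Dualizing, $V=V_1\oplus V_2$ with $V_j=W_{3-j}^\perp$, and each $\alpha_i$ with $i\in E_j$ vanishes on $V_{3-j}$; this exhibits $\mathcal{A}$ as $\mathcal{A}_1\times\mathcal{A}_2$ with $M(\mathcal{A}_j)\cong M_j$. Since general position of a family of hyperplanes is a matroidal condition, $M(\mathcal{A}_1)\cong U_{p,n-r+p}$ means precisely that $\mathcal{A}_1$ is generic, and the free matroid $M(\mathcal{A}_2)$ is realized only by Boolean arrangements; this gives (iii).

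Finally, for (iii)$\Leftrightarrow$(iv) I would use the fact that, for a central arrangement $\mathcal{B}$, homogenizing its defining forms changes nothing, so the cone $c\mathcal{B}$ is nothing but the product of $\mathcal{B}$ with the one-dimensional Boolean arrangement given by the new coordinate hyperplane $\{x_0=0\}$. Consequently a Boolean arrangement in an $(r-p)$-dimensional space is exactly an $(r-p)$-fold product of rank-one Boolean arrangements, so $\mathcal{A}_1\times\mathcal{A}_2$ in (iii) is precisely what one obtains by coning the generic arrangement $\mathcal{A}_1$ a total of $r-p$ times; conversely, successively coning a generic central arrangement of $n-r+p$ hyperplanes in a $p$-dimensional space $r-p$ times produces, up to linear isomorphism, such a product. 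Tracking the parameters ($n-r+p\mapsto n$, $p\mapsto r$) and the matroids ($M(\mathcal{A})\cong U_{p,n-r+p}\oplus U_{1,1}^{\oplus(r-p)}=U_{p,n-r+p}\oplus U_{r-p,r-p}$) closes the loop back to (ii).

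I expect the only step with genuine content to be the linear-algebra argument inside (ii)$\Rightarrow$(iii) that a matroid direct sum decomposition splits $V^*$; however, the rank count makes this short, so I do not anticipate a serious obstacle. The main point requiring care is the passage between $I(\mathcal{A})$ and its initial ideal in (i)$\Leftrightarrow$(ii): one must note that the numerical invariant $p$ is preserved because $I(\mathcal{A})$ and $\mathcal{I}_\prec(M(\mathcal{A}))$ have equal Hilbert functions.
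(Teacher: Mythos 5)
Your proposal is correct and follows the paper's own route: the equivalence (i)$\Leftrightarrow$(ii) via Theorem \ref{th24}, Corollary \ref{co1} (applicable since $BC_\prec(M(\mathcal{A}))$ is shellable, hence the initial ideal is Cohen--Macaulay, with $p$ preserved by equality of Hilbert functions) and Theorem \ref{lm1}, while (ii)$\Leftrightarrow$(iii)$\Leftrightarrow$(iv) are treated in the paper as a matter of interpreting terminology. Your explicit splitting $V^*=W_1\oplus W_2$ realizing the matroid direct sum as a product arrangement, and the identification of coning a central arrangement with taking the product with a rank-one Boolean arrangement, are just careful write-ups of exactly those terminological steps, and they are sound.
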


\begin{proof}
For an ordering  $\prec$ of the ground set $[n]$ of $M(\mathcal{A})$, we use the same notation to denote an induced monomial order on $S$. Then by Theorem \ref{th24}, $\mathrm{in}_\prec(I(\mathcal{A}))=\mathcal{I}_\prec\big(M(\mathcal{A})\big).$ Now the equivalence of (i) and (ii) follows by combining Corollary \ref{co1} and Theorem \ref{lm1}. Whereas the equivalences of (ii) and (iii), (iii) and (iv) are just a matter of interpreting terminologies.
\end{proof}

Before going further, let us recall shortly here the notion of Koszul algebra. For more information, we refer to the survey of Fr\"{o}berg \cite{F}. Let $B=\mathcal{S}/I$  be a graded ${K}$-algebra, where $\mathcal{S}$ is either a polynomial algebra or an exterior algebra over ${K}$ and $I$ is a graded ideal of $\mathcal{S}$. Then $B$ is called a {\it Koszul algebra} if ${K}$ has a linear resolution over $B$. It is well-known that if $B$ is Koszul then $I$ is generated by quadrics. The converse is not true in general. However, it follows from a result of Fr\"{o}berg that if $I$ has a quadratic Gr\"{o}bner basis then $B$ is Koszul.

The following consequence is immediate from the above theorem.

\begin{corollary}
 Let $\mathcal{A}$ be an essential central arrangement. Then $I(\mathcal{A})$ has a $2$-linear resolution if and only if $\mathcal{A}$ is obtained by successively coning a central arrangement of lines in a plane. In this case, the Orlik-Terao algebra $\mathbf{C}(\mathcal{A})$ is Koszul.
\end{corollary}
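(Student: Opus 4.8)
The plan is to read off both assertions from Theorem \ref{th34} (applied with $p=2$) together with Theorem \ref{th24}; no genuinely new argument is needed.

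\textbf{The equivalence.} First I would specialize Theorem \ref{th34} to $p=2$. Condition (iv) then reads: $\mathcal{A}$ is obtained by successively coning a generic central arrangement of $n-r+2$ hyperplanes in a $2$-dimensional vector space. The only thing to observe is that, in dimension $2$, ``generic central arrangement'' is no restriction: any two distinct lines through the origin in $K^2$ are cut out by linearly independent forms, so every subset of size at most $2$ is independent, which is exactly genericity. Hence a generic central arrangement of lines in a plane is just an arbitrary central arrangement of lines in a plane, and conditions (i) and (iv) become the two conditions in the present statement. (The constraint $2\le p\le r$ becomes $r\ge 2$; this is automatic as soon as $I(\mathcal{A})\ne 0$, i.e. $M(\mathcal{A})$ has a circuit, and if $I(\mathcal{A})=0$ then both sides of the claimed equivalence fail, so nothing is lost.)

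\textbf{Koszulness.} Suppose $I(\mathcal{A})$ has a $2$-linear resolution. By the equivalence of (i) and (ii) in Theorem \ref{th34} we get $M(\mathcal{A})\cong U_{2,n-r+2}\oplus U_{r-2,r-2}$. The free summand $U_{r-2,r-2}$ contributes no circuits, so every circuit $C$ of $M(\mathcal{A})$ lies in the $U_{2,n-r+2}$-summand and therefore satisfies $|C|=3$; consequently each generator $\partial(r_C)$ is a homogeneous quadric. By Theorem \ref{th24}, $\{\partial(r_C):C\in\mathfrak{C}(M(\mathcal{A}))\}$ is a (universal) Gr\"obner basis of $I(\mathcal{A})$, so $I(\mathcal{A})$ has a quadratic Gr\"obner basis, and the result of Fr\"oberg recalled above yields that $\mathbf{C}(\mathcal{A})=S/I(\mathcal{A})$ is Koszul. (Equivalently: by Theorem \ref{th24}, $\mathrm{in}_\prec(I(\mathcal{A}))=\mathcal{I}_\prec(M(\mathcal{A}))$ is generated in degree $2$, and since $I(\mathcal{A})$ is homogeneous its reduced Gr\"obner basis then consists of quadrics.)

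\textbf{Where the care is needed.} There is essentially no hard step here, since the substance is packaged in Theorems \ref{th34} and \ref{th24}. The one point one must not slip on is the Koszul implication: for $p=2$, the mere existence of a $2$-linear resolution of the \emph{ideal} $I(\mathcal{A})$ does not by itself force the \emph{algebra} $S/I(\mathcal{A})$ to be Koszul, so the argument genuinely has to route through the quadratic Gr\"obner basis supplied by Theorem \ref{th24} (or through the quadratic monomial initial ideal) and Fr\"oberg's criterion, rather than try to conclude from the shape of the resolution.
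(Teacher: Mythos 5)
Your argument is correct and takes essentially the same route as the paper, which derives the corollary directly from Theorem \ref{th34} with $p=2$ (genericity being vacuous for lines in a plane) and obtains Koszulness from the quadratic Gr\"obner basis of $I(\mathcal{A})$ given by Theorem \ref{th24} together with Fr\"oberg's criterion recalled just before the corollary. The only quibble is your parenthetical claim that both sides fail when $I(\mathcal{A})=0$ --- for a Boolean arrangement the coning description still holds --- but this degenerate case is equally glossed over in the paper's own statement and does not affect the substance.
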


\section{The complete intersection property}

The broken circuit complex was introduced by Wilf in \cite{W}. There he found several necessary conditions for a polynomial to be the chromatic polynomial of a graph. He also computed the chromatic polynomials of the graphs that admit a broken circuit complex with disjoint minimal broken circuits, and derived from that upper bounds for coefficients of the chromatic polynomial of a maximal planar graph. In this section, we characterize, in terms of the set of circuits, those ordered matroids whose minimal broken circuits are pairwise disjoint, i.e., those ordered matroids whose Stanley-Reisner ideal of the broken circuit complex is a complete intersection. This result is applied to triangulations of  simple polygons to show that the cycle matroid of such a graph admits a broken circuit complex with disjoint minimal broken circuits. Then we show that Conjecture \ref{cj} holds for matroids  whose minimal broken circuits are pairwise disjoint. As another application, we improve Wilf's upper bounds mentioned above. We also show, in codimension 3, that Gorensteiness of the Stanley-Reisner ideal of the broken circuit complex is equivalent to be a complete intersection. Finally, we characterize arrangements whose Orlik-Terao ideal is a complete intersection and verify Conjecture \ref{cj} for those arrangements. For the last result see also \cite[Cor. 5.12]{DGT} who proved independently a variation of this  statement with a different method.

\subsection{Stanley-Reisner ideals of broken circuit complexes}

Let $(M,\prec)$ be an ordered simple matroid on $[n]$. We keep some notation introduced before:  $\mathfrak{C}(M)$ is the set of circuits of $M$; $\mathcal{I}_\prec(M)\subset S={K}[x_1,\ldots,x_n]$ denotes the Stanley-Reisner ideal of the broken circuit complex $BC_\prec(M)$; and $\min_\prec (C)$ and $bc_\prec(C)$ are respectively the minimal element and the broken circuit of a given circuit $C$ with respect to $\prec$. Recall that $\mathcal{I}_\prec(M)=(x_{bc_\prec(C)}:C\in \mathfrak{C}(M))$, where  $x_{bc_\prec(C)}=\prod_{i\in bc_\prec(C)}x_i$.

Let $\mathfrak{D}$ be a subset of $\mathfrak{C}(M)$. We call $\mathfrak{D}$ a {\it generating set} of $\mathfrak{C}(M)$ if $\{x_{bc_\prec(C)}: C\in \mathfrak{D}\}$ generates $\mathcal{I}_\prec(M)$. Obviously, $\mathfrak{D}$ is a generating set of $\mathfrak{C}(M)$ if and only if for any $C'\in \mathfrak{C}(M)$, there is a $C\in \mathfrak{D}$ with $bc_\prec(C)\subseteq bc_\prec(C')$, or, in other words, $\{bc_\prec(C): C\in\mathfrak{D}\}$ contains the set of minimal broken circuits of $M$.

Let $\mathcal{G}(\mathfrak{D})$ be the {\it intersection graph} of  $\mathfrak{D}$, i.e., the graph whose vertex set is $\mathfrak{D}$ and edges are pairs $\{C,C'\}$ with $C\cap C'\ne \emptyset$. We say that $\mathfrak{D}$ is {\it connected} (respectively, a {\it tree}, a {\it forest}) when so is the graph $\mathcal{G}(\mathfrak{D})$.

We will often consider those subsets $\mathfrak{D}$ of $\mathfrak{C}(M)$ with this property: for any distinct elements $C,C'\in\mathfrak{D}$, one has either $C\cap C'=\min_\prec(C)$ or $C\cap C'=\min_\prec(C')$ whenever $C\cap C'\ne \emptyset$. We call them {\it simple} subsets. Apparently, $\mathfrak{D}$ is a simple subset of $\mathfrak{C}(M)$ if and only if the broken circuits of the elements of $\mathfrak{D}$ are pairwise disjoint.

Now for each subset $\mathfrak{D}$ of  $\mathfrak{C}(M)$, set
$$\mathcal{C}(\mathfrak{D})=\bigcup_{D\in\mathfrak{D}}D-\bigcup_{D,D'\in\mathfrak{D},D\ne D'}(D\cap D') =\bigcup_{D\in\mathfrak{D}}\big(D-\bigcup_{D'\in\mathfrak{D}-\{D\}}(D\cap D')\big) .$$
Then our characterizations for the complete intersection property of the ideal $\mathcal{I}_\prec(M)$ can be stated as follows.

\begin{theorem}\label{th41}
Let $(M,\prec)$ be an ordered simple matroid on $[n]$. The following conditions are equivalent:
\begin{enumerate}
\item $\mathcal{I}_\prec(M)$ is a complete intersection;
\item The minimal broken circuits of $M$ are pairwise disjoint;
\item There exists a simple subset $\mathfrak{D}$ of $\mathfrak{C}$ such that
$$\mathfrak{C}(M)=\{\mathcal{C}(\mathfrak{D}'): \mathfrak{D}' \subseteq\mathfrak{D}\text{ is a tree} \}.$$
\end{enumerate}
\end{theorem}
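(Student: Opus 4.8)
The plan is to establish $(i)\Leftrightarrow(ii)$ together with $(ii)\Rightarrow(iii)\Rightarrow(i)$. The equivalence $(i)\Leftrightarrow(ii)$ is the easy part: the minimal monomial generators of $\mathcal{I}_\prec(M)$ are exactly the $x_B$ with $B$ a minimal broken circuit, and a monomial ideal is a complete intersection precisely when its minimal monomial generators have pairwise disjoint supports. If the supports are disjoint the generators form a regular sequence; conversely, if two minimal generators shared a variable $x$, say $u=xv$ and $u'=xv'$, then $u'v=v'u\in(u)$ while $v\notin(u)$ for degree reasons, so $u'$ is a zerodivisor modulo $(u)$ and the minimal generators are not a regular sequence. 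Applied to $\mathcal{I}_\prec(M)$ this is exactly the disjointness of the minimal broken circuits of $M$.

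For $(ii)\Rightarrow(iii)$ I would take $\mathfrak{D}$ to be the set of all circuits $C$ of $M$ for which $bc_\prec(C)$ is a minimal broken circuit. Then $\mathfrak{D}$ is simple: distinct $C,C'\in\mathfrak{D}$ have distinct, hence by $(ii)$ disjoint, broken circuits, which forces $C\cap C'\in\{\emptyset,\min_\prec(C),\min_\prec(C')\}$. The two inclusions in $\mathfrak{C}(M)=\{\mathcal{C}(\mathfrak{D}'):\mathfrak{D}'\subseteq\mathfrak{D}\text{ a tree}\}$ are then proved separately. For ``$\subseteq$'' I would induct on $|C|$: if $bc_\prec(C)$ is minimal then $C\in\mathfrak{D}$ and $C=\mathcal{C}(\{C\})$; otherwise pick $D_0\in\mathfrak{D}$ with $bc_\prec(D_0)\subsetneq bc_\prec(C)$, note that $\min_\prec(D_0)\notin C$ (else $D_0\subseteq C$, whence $D_0=C$ since circuits are incomparable, contradicting $bc_\prec(D_0)\subsetneq bc_\prec(C)$), set $C'=(C\setminus bc_\prec(D_0))\cup\{\min_\prec(D_0)\}$, check that $C'$ is a circuit with $|C'|<|C|$, $C'\cap D_0=\{\min_\prec(D_0)\}$ and $(C'\cup D_0)\setminus(C'\cap D_0)=C$, and then glue the inductively obtained tree $\mathfrak{T}$ for $C'$ to $D_0$ (after checking $\mathfrak{T}\cup\{D_0\}$ is again a tree in $\mathfrak{D}$ with $\mathcal{C}(\mathfrak{T}\cup\{D_0\})=C$). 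For ``$\supseteq$'' I would induct on $|\mathfrak{D}'|$: ordering a tree $\mathfrak{D}'$ by a depth-first search, each vertex meets the union of its predecessors in exactly one element, so Theorem \ref{lm7} shows $\mathcal{C}(\mathfrak{D}')$ contains a circuit, and peeling off a leaf $L$ and comparing cardinalities with the circuit $\mathcal{C}(\mathfrak{D}'\setminus\{L\})$ supplied by induction shows it equals $\mathcal{C}(\mathfrak{D}')$.

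For $(iii)\Rightarrow(i)$, since $\mathfrak{D}$ is simple the monomials $\{x_{bc_\prec(D)}:D\in\mathfrak{D}\}$ have pairwise disjoint supports, so it is enough to prove they generate $\mathcal{I}_\prec(M)$, i.e. that every tree $\mathfrak{D}'\subseteq\mathfrak{D}$ has a vertex $D$ with $bc_\prec(D)\subseteq bc_\prec(\mathcal{C}(\mathfrak{D}'))$. I would again induct on $|\mathfrak{D}'|$, peeling a leaf $L$ whose edge to its neighbour contributes $\min_\prec(L)$, so that $\mathcal{C}(\mathfrak{D}')=\big(\mathcal{C}(\mathfrak{D}'\setminus\{L\})\setminus\{\min_\prec(L)\}\big)\sqcup bc_\prec(L)$, and using the auxiliary fact, maintained inductively, that $\min_\prec(\mathcal{C}(\mathfrak{D}'\setminus\{L\}))$ equals the smallest of the values $\min_\prec(D)$ over $D\in\mathfrak{D}'\setminus\{L\}$; this lets one place $bc_\prec(L)$, or a vertex supplied by induction, inside $bc_\prec(\mathcal{C}(\mathfrak{D}'))$.

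The hardest part, in both directions of $(ii)\Leftrightarrow(iii)$, is the matroid bookkeeping: $(a)$ showing that $C'=(C\setminus bc_\prec(D_0))\cup\{\min_\prec(D_0)\}$ really is a circuit --- expected from $\min_\prec(D_0)\in\operatorname{cl}(bc_\prec(D_0))$, so that $\operatorname{rank}(C\cup D_0)=\operatorname{rank}(C)$, together with a minimality argument, or from a more careful application of Theorem \ref{lm7}; $(b)$ tracking precisely which elements of the circuits in $\mathfrak{D}'$ survive in $\mathcal{C}(\mathfrak{D}')$ and locating its $\prec$-minimum, which is subtle because $\min_\prec$ need not vary monotonically along a branch of the tree; and $(c)$ establishing the existence of a ``good'' leaf to peel, which I expect follows by considering the vertex of $\mathfrak{D}'$ of maximal $\min_\prec$ and using both the simplicity of $\mathfrak{D}$ and the absence of triangles in a tree.
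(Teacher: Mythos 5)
Your overall architecture coincides with the paper's (the paper also proves (i)$\Rightarrow$(ii) via circuit elimination, takes $\mathfrak{D}$ to be the circuits attached to the minimal broken circuits for (ii)$\Rightarrow$(iii), and reduces (iii)$\Rightarrow$(i) to finding a member of a tree whose broken circuit sits inside $bc_\prec(\mathcal{C}(\mathfrak{D}'))$), but the steps you defer are exactly where the work lies, and two of the justifications you do give are false. First, "distinct $C,C'\in\mathfrak{D}$ have distinct broken circuits" is not a general fact: distinct circuits can have \emph{equal} broken circuits (in $U_{2,4}$, $\{1,3,4\}$ and $\{2,3,4\}$ both have broken circuit $\{3,4\}$). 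Ruling this out under (ii) needs an elimination argument -- this is precisely what the paper does inside (i)$\Rightarrow$(ii), applying Theorem~\ref{lm7} to $C\cup C_j$ minus an element of $bc_\prec(C_j)$ and contradicting generation. Second, your step $(a)$ is unproved and your suggested justification does not suffice: from $\min_\prec(D_0)$ lying in the closure of $bc_\prec(D_0)$ you only get $\mathrm{rank}(C\cup D_0)=|C|-1$, which does not even show that $C'=(C\setminus bc_\prec(D_0))\cup\{\min_\prec(D_0)\}$ is dependent (its cardinality is $|C|-|D_0|+2$), let alone a circuit; and Theorem~\ref{lm7} applied to the two circuits $C,D_0$ only allows deleting one element, not all of $bc_\prec(D_0)$. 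The paper avoids this exchange claim entirely: Lemma~\ref{lm9} applies Theorem~\ref{lm7} globally to a carefully enumerated family $\mathfrak{D}\cup\{C\}$, and Lemma~\ref{lm8}(ii)--(iii) supplies the rigidity ($\mathcal{C}(\mathfrak{D}')$ contained in a circuit forces equality, and $\mathcal{C}(\mathfrak{D}')\subseteq\mathcal{C}(\mathfrak{D})$ with $\mathfrak{D}$ a tree forces $\mathfrak{D}'=\mathfrak{D}$) that your "comparing cardinalities" step would actually need -- a cardinality count alone does not show the circuit found inside $\mathcal{C}(\mathfrak{D}')$ exhausts it.

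In (iii)$\Rightarrow$(i) the auxiliary invariant you propose is simply false: $\min_\prec(\mathcal{C}(\mathfrak{T}))$ need not equal the smallest of the $\min_\prec(D)$, $D\in\mathfrak{T}$, because that smallest minimum can be the intersection point of two members of the tree and then disappears from $\mathcal{C}(\mathfrak{T})$. In the paper's Figure~1 example with the ordering $10\prec 9\prec\cdots\prec 1$, the tree $\{\{5,6,10\},\{7,8,9,10\}\}$ has both minima equal to $10$, which is exactly the shared element; $\mathcal{C}=\{5,6,7,8,9\}$ has $\prec$-minimum $9$. So your peeling induction breaks as stated, although the conclusion you want (some $bc_\prec(D)\subseteq bc_\prec(\mathcal{C}(\mathfrak{D}'))$) is still true; proving it requires the case analysis of the paper's Lemma~\ref{lm10}, which takes two leaves of the tree and, in the delicate case, follows a path along which the minima strictly decrease. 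In short: the skeleton is right, but the three matroid-theoretic facts you label $(a)$--$(c)$ are the substance of the theorem (the paper's Lemmas~\ref{lm8}, \ref{lm9}, \ref{lm10}), and as proposed they are either missing or supported by incorrect claims.
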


To prove this theorem, we need some preparations.

\begin{lemma}\label{lm6}
Let $\mathfrak{D}\subseteq\mathfrak{C}(M)$ be a simple subset of cardinality $m$. Then the following statements hold.
\begin{enumerate}[\rm(i)]
\item
There is an enumeration of elements of $\mathfrak{D}$, say as $C_1,\ldots,C_m$, such that
$$\big|C_i\cap\big(\bigcup_{j<i}C_j\big)\big|\leq 1\quad \text{for all} \ i=2,\ldots,m.$$
Moreover, if $\mathfrak{D}'\subseteq \mathfrak{D}$ is connected, then there exists such an enumeration so that the elements of $\mathfrak{D}'$ appear first.
\item
$\mathfrak{D}$ is a tree if and only if $\mathfrak{D}$ is connected and any three distinct elements of $\mathfrak{D}$ have empty intersection.
\item We have
$$\big|\bigcup_{C,C'\in\mathfrak{D},C\ne C'}(C\cap C')\big|\leq m-1,$$
 with equality if and only if $\mathfrak{D}$ is a tree.
\end{enumerate}
\end{lemma}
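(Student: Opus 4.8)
\emph{Plan.} All three parts will be extracted from a single structural fact, which holds using only simplicity: for distinct $C,C'\in\mathfrak{D}$ the set $C\cap C'$ is empty or a singleton, equal to $\min_\prec C$ or $\min_\prec C'$ in the latter case (so the circuit elimination Theorem~\ref{lm7} is \emph{not} needed here). The fact I would isolate first is that \emph{$\mathcal{G}(\mathfrak{D})$ is a block graph whose blocks are concurrent}: every $2$-connected component (equivalently, maximal clique) $Q$ satisfies $\bigcap_{C\in Q}C\neq\emptyset$, and distinct blocks use distinct common elements.

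\emph{The structural fact.} Let $C_1,\dots,C_k$ be a cycle in $\mathcal{G}(\mathfrak{D})$ (indices mod $k$) and $\{x_i\}=C_i\cap C_{i+1}$. If the $x_i$ are pairwise distinct, orient each edge $\{C_i,C_{i+1}\}$ toward the circuit of which $x_i$ is the $\prec$-minimum (possible by simplicity); two edges into a vertex $C_i$ would force $x_{i-1}=x_i$, so every in-degree is $\le1$, hence $=1$ (there being $k$ edges and $k$ vertices), so the cycle is coherently oriented and, after relabelling, $x_i=\min_\prec C_{i+1}$ for all $i$; then $x_i\prec x_{i+1}$ and $x_1\prec\cdots\prec x_k\prec x_1$, absurd. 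Hence some $x_{i-1}=x_i$; for a chordless cycle of length $\ge4$ this element would lie in $C_{i-1}\cap C_{i+1}=\emptyset$, impossible, so $\mathcal{G}(\mathfrak{D})$ is chordal. For $k=3$, $x_1=x_2$ and $C_1\cap C_3=\{x_3\}$ give $x_1=x_2=x_3\in C_1\cap C_2\cap C_3$, so every triangle is concurrent; then every clique of size $\ge3$ is concurrent, since fixing $C_1,C_2$ in it with $C_1\cap C_2=\{y\}$, any further member forms a triangle with $C_1,C_2$ whose concurrency point must be $y$. If two distinct maximal cliques had the same common element their union would be a clique, contradicting maximality, so maximal cliques have distinct common elements; consequently two distinct maximal cliques share at most one circuit (two shared circuits would meet in the common element of each clique at once). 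A chordal graph whose maximal cliques pairwise meet in at most one vertex is a block graph.

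\emph{Deducing the three parts.} (i), first claim: choose $C\in\mathfrak{D}$ with $\min_\prec C$ $\prec$-maximal among the minima of members of $\mathfrak{D}$; a circuit $C'\neq C$ meeting $C$ meets it in one point, which is $\min_\prec C$ or $\min_\prec C'$, and $\min_\prec C'$ is impossible since it would give $\min_\prec C\prec\min_\prec C'\preceq\min_\prec C$; hence $C$ meets $\bigcup_{C'\neq C}C'$ in $\{\min_\prec C\}$ at most, so remove $C$ and induct. (i), ``moreover'': reduce to $\mathcal{G}(\mathfrak{D})$ connected (enumerate the component of $\mathfrak{D}'$ first, with $\mathfrak{D}'$ first inside it, then the remaining components), then invoke the block-graph fact — provable by a short induction on $|\mathfrak{D}|$ pruning a leaf block — that a connected block graph with a connected proper vertex subset $\mathfrak{D}'$ has a simplicial vertex $D\notin\mathfrak{D}'$; since $\{D\}\cup N(D)$ is a clique, $D$ meets $\bigcup_{C\neq D}C$ in the single common element of that clique, so remove $D$ and induct on $|\mathfrak{D}\setminus\mathfrak{D}'|$ (base case $\mathfrak{D}=\mathfrak{D}'$: the first claim). (ii): $\mathfrak{D}$ is a tree iff $\mathcal{G}(\mathfrak{D})$ is connected with no block of size $\ge3$, i.e.\ connected and triangle-free, and a triangle is, by concurrency, exactly three distinct circuits of $\mathfrak{D}$ sharing an element. (iii): with an enumeration $C_1,\dots,C_m$ as in the first claim, $\bigcup_{C\neq C'}(C\cap C')=\bigcup_{i=2}^{m}\bigl(C_i\cap\bigcup_{j<i}C_j\bigr)$, a union of $m-1$ sets of size $\le1$, giving the bound; equality forces each of these sets to be a singleton (hence $\mathcal{G}(\mathfrak{D})$ connected) and the sets pairwise distinct, which excludes a block of size $\ge3$ (a block concurrent at $x$ with members $C_a,C_b$ latest in the enumeration would give two of the sets equal to $\{x\}$), so $\mathfrak{D}$ is a tree; conversely a tree has $m-1$ edges, and two edges carrying the same intersection point would create a triangle or a $K_4$ in $\mathcal{G}(\mathfrak{D})$, impossible, so the $m-1$ edge-points are distinct and the union has size $m-1$.

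\emph{Main obstacle.} The load-bearing step is the structural fact, specifically the in-degree bookkeeping in the cycle-orientation argument and the passage from ``every triangle concurrent'' to ``block graph''; once it is in hand, the first claim of (i), part (ii), and the cardinality bound in (iii) are immediate, and the only remaining work is the block-graph lemma furnishing a simplicial vertex outside a prescribed connected set, needed for the ``moreover''.
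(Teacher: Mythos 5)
Your proposal is correct, and it reorganizes the argument rather than reproducing the paper's. The shared kernel is the same observation: walking around a cycle of $\mathcal{G}(\mathfrak{D})$ and comparing the edge labels with the $\prec$-minima of the circuits produces a strictly increasing cyclic chain, which is absurd. The paper states this as ``no cycle of $\mathcal{G}(\mathfrak{D})$ has pairwise distinct edge labels'' and applies it three separate times: to find some $C$ with $|C\cap\bigcup_{C'\ne C}C'|\le 1$ (by contradiction, assuming every $d(C)\ge 2$), to get the ``moreover'' directly (if $C\cap C_1\ne C\cap C_2$ with $C_1,C_2\in\mathfrak{D}'$, a shortest cycle through $C_1,C,C_2$ has pairwise distinct labels), and again for (ii). You instead upgrade it to a structure theorem (chordality, concurrency of cliques, distinct concurrency points for distinct blocks, hence a block graph) and read all three parts off that. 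Your derivation of the first claim of (i) by choosing $C$ whose $\min_\prec$ is $\prec$-largest among the minima is actually simpler than the paper's and needs no cycle argument; your (ii) and (iii) are comparable in effort; your ``moreover'' is heavier, since it routes through the auxiliary fact that a connected block graph has a simplicial vertex outside a prescribed proper connected vertex subset, where the paper has a two-line direct argument.

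Two points to tighten, neither fatal. First, refuting ``the $x_i$ are pairwise distinct'' only yields that \emph{some} two labels coincide, while your chordality step needs two \emph{consecutive} labels to coincide; this is harmless because the orientation argument only ever compares consecutive labels, so run it under the weaker hypothesis ``consecutive labels distinct'' and the needed conclusion follows verbatim. Second, the simplicial-vertex lemma is true for block graphs, but the hint ``prune a leaf block'' needs care: after deleting $B-\{c\}$ for a leaf block $B$ with cut vertex $c$, the simplicial vertex found in the smaller graph could be $c$ itself, which need not be simplicial in $\mathcal{G}(\mathfrak{D})$. A safe argument: if every simplicial vertex lay in $\mathfrak{D}'$, pick $D\notin\mathfrak{D}'$; then $D$ is a cut vertex, every component of $\mathcal{G}(\mathfrak{D})-D$ contains a leaf block avoiding $D$ and hence a simplicial vertex, so every component meets $\mathfrak{D}'$, contradicting the connectedness of $\mathcal{G}(\mathfrak{D}')$ inside $\mathcal{G}(\mathfrak{D})-D$. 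Note that this lemma genuinely uses the block-graph (diamond-free) property and not mere chordality: in a diamond with the two simplicial vertices and one hub forming the subset, no simplicial vertex lies outside it; so your concurrency step is indispensable to this route.
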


\begin{proof}
(i) By induction on $m$, to prove the first assertion it suffices to show that there exists a circuit $C\in\mathfrak{D}$ such that $$d(C):=\big|C\cap\big(\bigcup_{C'\in\mathfrak{D}-\{C\}}C'\big)\big|\leq 1.$$
Assume the contrary, i.e., $d(C)\geq 2$ for all $C\in\mathfrak{D}$. Consider the intersection graph $\mathcal{G}(\mathfrak{D})$ of $\mathfrak{D}$. For each edge $\{C,C'\}$ of $\mathcal{G}(\mathfrak{D})$ we call $C\cap C'$ its label. Then it is easily seen that $\mathcal{G}(\mathfrak{D})$ contains a cycle $C_1\ldots C_k$ with pairwise distinct edge labels, i.e., $C_i\cap C_{i+1}\ne C_{j}\cap C_{j+1}$ for $i\ne j$ ($C_{k+1}=C_1$). Let $\{e_i\}=C_i\cap C_{i+1}$ and assume  $e_1=\min\{e_i:i=1,\ldots,k\}$. Recall that one has either $e_1=\min_\prec(C_1)$ or $e_1=\min_\prec(C_2)$. We will consider the case $e_1=\min_\prec(C_1)$, the other one can be treated similarly. Since
$$\{e_k\}=C_k\cap C_1\ne C_1\cap C_2=\{e_1\},$$
it follows that $e_k=\min_\prec(C_k)$. Proceeding in this way, we obtain $e_i=\min_\prec(C_i)$ for all $i=1,\ldots,k.$ In particular, we have $e_2=\min_\prec(C_2)\leq e_1\in C_2$. This, however, is impossible because $e_1\ne e_2$ and $e_1=\min\{e_i\}\leq e_2$.

In order to prove the second assertion, we first enumerate the set $\mathfrak{D}'$ as in the first assertion and then try to enumerate the set $\mathfrak{D}-\mathfrak{D}'$ to get a desired enumeration of $\mathfrak{D}$. The case that $C\cap\big(\bigcup_{C'\in\mathfrak{D}'}C'\big)=\emptyset$ for all $C\in\mathfrak{D}-\mathfrak{D}'$ is trivial: any enumeration of $\mathfrak{D}-\mathfrak{D}'$ as in the first assertion works. In the remaining case choose $C\in\mathfrak{D}$ such that $C\cap\big(\bigcup_{C'\in\mathfrak{D}'}C'\big)\ne\emptyset.$ If we can show that $|C\cap\big(\bigcup_{C'\in\mathfrak{D}'}C'\big)|=1$ then the assertion will follow by induction. Assume that there are $C_1,C_2\in\mathfrak{D}'$ such that $C\cap C_1\ne C\cap C_2$. Since $\mathcal{G}(\mathfrak{D}')$ is connected, there exists a path in $\mathcal{G}(\mathfrak{D}')$ connecting $C_1$ and $C_2$. It follows that $\mathcal{G}(\mathfrak{D})$ has cycles containing $\{C_1,C,C_2\}$. Let $\gamma$ be such a cycle wit
 h shortest length. Then it is easy to see that the labels of the edges of $\gamma$ are pairwise distinct. But this cannot be the case as we have shown before.

(ii) If three distinct elements $C_1,C_2,C_3$ of $\mathfrak{D}$ have non-empty intersection, then they form a cycle in the graph $\mathcal{G}(\mathfrak{D})$, hence $\mathfrak{D}$ cannot be a tree. Conversely, assume that $\mathfrak{D}$ is connected. If $\mathfrak{D}$ is not a tree, then $\mathcal{G}(\mathfrak{D})$ must contain some cycle $\gamma$. As shown in (i), there are two edges of $\gamma$ which share the same label. The vertices of these two edges then have non-empty intersection.

(iii) Enumerate the elements of $\mathfrak{D}$ as in (i). We have
$$\begin{aligned}
\big|\bigcup_{C,C'\in\mathfrak{D},C\ne C'}(C\cap C')\big|&=\big|\bigcup_{i=2}^m\big(C_i\cap\big(\bigcup_{j<i}C_j\big)\big)\big|
\leq\sum_{i=2}^m\big|C_i\cap\big(\bigcup_{j<i}C_j\big)\big|\leq m-1.
\end{aligned}$$
The equality holds if and only if the sets $C_i\cap\big(\bigcup_{j<i}C_j\big)$ for $i=2,\ldots,m$ satisfy two conditions:
they are non-empty; and, they are pairwise distinct. Observe that the first condition is equivalent to the connectedness of $\mathfrak{D}$, while the second one means that the intersection of any three distinct elements of $\mathfrak{D}$ is empty.  The assertion now follows from (ii).
\end{proof}

\begin{remark}\label{rm43}
The proof of Lemma \ref{lm6}(i) is based on a fact that the graph $\mathcal{G}(\mathfrak{D})$ contains no cycles whose edges have pairwise distinct labels. So when $\mathcal{G}(\mathfrak{D})$ has no cycles with pairwise distinct edge labels (in particular, when $\mathcal{G}(\mathfrak{D})$ has no cycles at all, i.e., $\mathfrak{D}$ is a forest) and any two distinct members of $\mathfrak{D}$ intersect in at most one element (but $\mathfrak{D}$ need not be simple), the conclusion of Lemma \ref{lm6}(i) is still true. Moreover, in this case, there is an ordering of the ground set $[n]$ such that $\mathfrak{D}$ is simple with respect to this ordering. Indeed, one first enumerates the elements of $\mathfrak{D}$ as $C_1,\ldots,C_m$ such that
$$\big|C_i\cap\big(\bigcup_{j<i}C_j\big)\big|\leq 1\quad \text{for all} \ i=2,\ldots,m.$$
Set $D_1=C_1$ and $D_i=C_i-\bigcup_{j<i}C_j$ for $ i=2,\ldots,m.$ Then any ordering $\prec$ of $[n]$ such that $d_i\prec d_j$ whenever $d_i\in D_i,d_j\in D_j$ and $i<j$ satisfies the requirement.
\end{remark}

\begin{lemma}\label{lm8}
Let $\mathfrak{D},\mathfrak{D}'\subseteq\mathfrak{C}(M)$ be non-empty simple subsets. Then the following statements hold.
\begin{enumerate}[\rm(i)]
\item
There exists a circuit $C\in \mathfrak{C}(M)$ such that $C\subseteq\mathcal{C}(\mathfrak{D}).$
\item
If $\mathcal{C}(\mathfrak{D})\subseteq C'$ for some circuit $C'\in \mathfrak{C}(M)$, then $\mathcal{C}(\mathfrak{D})=C'$ and $\mathfrak{D}$ is a tree.
\item
If $\mathfrak{D}$ is a tree, $\mathfrak{D}\cup\mathfrak{D}'$ is simple and $\mathcal{C}(\mathfrak{D}')\subseteq\mathcal{C}(\mathfrak{D})$, then $\mathfrak{D}=\mathfrak{D}'$.
\end{enumerate}
\end{lemma}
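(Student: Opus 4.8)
The plan is to read off parts (i) and (ii) almost at once from the circuit elimination theorem (Theorem~\ref{lm7}) together with Lemma~\ref{lm6}, and to reserve the real effort for (iii), which I will reduce to an argument about cycles with pairwise distinct edge labels, in the style of the proof of Lemma~\ref{lm6}(i). For (i): enumerate $\mathfrak{D}=\{C_1,\dots,C_m\}$ as in Lemma~\ref{lm6}(i), so that $\bigl|C_i\cap(\bigcup_{j<i}C_j)\bigr|\le 1$ for all $i\ge 2$; since $M$ is simple we have $|C_i|\ge 3$, hence $C_i\nsubseteq\bigcup_{j<i}C_j$ for all $i\ge 2$ and $\bigcup_i C_i$ is large. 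Put $N=\bigcup_{D\ne D'\in\mathfrak{D}}(D\cap D')$. By Lemma~\ref{lm6}(iii), $|N|\le m-1$, so we may enlarge $N$ to a set $B$ with $N\subseteq B\subseteq\bigcup_i C_i$ and $|B|=m-1$. Theorem~\ref{lm7} then produces a circuit $C$ with $C\subseteq\bigcup_i C_i-B\subseteq\bigcup_i C_i-N=\mathcal{C}(\mathfrak{D})$. Note for later that if $N\subsetneq B$ here, then in fact $C\subseteq\mathcal{C}(\mathfrak{D})-(B-N)\subsetneq\mathcal{C}(\mathfrak{D})$.

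For (ii): by (i) there is a circuit $C\subseteq\mathcal{C}(\mathfrak{D})\subseteq C'$; since circuits are minimal dependent sets, $C=C'$, so $\mathcal{C}(\mathfrak{D})=C'$. If $\mathfrak{D}$ were not a tree, then by Lemma~\ref{lm6}(iii) we would have $|N|\le m-2$; choosing in the construction of (i) a set $B$ with $N\subsetneq B\subseteq\bigcup_i C_i$ and $|B|=m-1$ would then yield a circuit $C\subsetneq\mathcal{C}(\mathfrak{D})\subseteq C'$, contradicting the minimality of the dependent set $C'$. Hence $\mathfrak{D}$ is a tree.

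For (iii) the decisive step is the following claim, whose proof repeats the argument of Lemma~\ref{lm6}(i): \emph{if $\mathfrak{F}\subseteq\mathfrak{C}(M)$ is a connected simple subset with $|\mathfrak{F}|\ge 2$, then there is no circuit $C$ satisfying $C\subseteq\mathcal{C}(\mathfrak{F})$ with $\{C\}\cup\mathfrak{F}$ simple.} Indeed, $\mathcal{C}(\mathfrak{F})$ is the disjoint union of the sets $\hat D_i:=C_i\cap\mathcal{C}(\mathfrak{F})$ as $C_i$ runs over $\mathfrak{F}$; if $C\subseteq\mathcal{C}(\mathfrak{F})$ then $C\cap C_i=C\cap\hat D_i$ for every $i$, and simplicity of $\{C\}\cup\mathfrak{F}$ forces each $C\cap C_i$ to have at most one element. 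Thus $C$ meets exactly $|C|\ge 3$ members of $\mathfrak{F}$, and the labels of the corresponding edges of $\mathcal{G}(\{C\}\cup\mathfrak{F})$ are pairwise distinct singletons, lying in the pairwise disjoint sets $\hat D_i$. Picking two such neighbours of $C$ and joining them by a path in the connected graph $\mathcal{G}(\mathfrak{F})$ gives a cycle of $\mathcal{G}(\{C\}\cup\mathfrak{F})$ through $C$ and those two neighbours; a shortest such cycle has pairwise distinct edge labels, which is impossible by the argument in the proof of Lemma~\ref{lm6}(i).

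Granting this claim, I would prove (iii) by induction on $|\mathfrak{D}'|$. If $|\mathfrak{D}'|=1$, say $\mathfrak{D}'=\{C'\}$, then $C'=\mathcal{C}(\mathfrak{D}')\subseteq\mathcal{C}(\mathfrak{D})$ with $\{C'\}\cup\mathfrak{D}$ simple; since $\mathfrak{D}$ is a connected tree, the claim forces $|\mathfrak{D}|=1$, so $\mathcal{C}(\mathfrak{D})$ is a circuit containing the circuit $C'$, whence $C'=\mathcal{C}(\mathfrak{D})$ and $\mathfrak{D}'=\mathfrak{D}$. If $\mathfrak{D}'$ is disconnected, its components $\mathfrak{D}'_1,\dots,\mathfrak{D}'_k$ ($k\ge 2$) have pairwise disjoint supports, so $\mathcal{C}(\mathfrak{D}')=\bigsqcup_a\mathcal{C}(\mathfrak{D}'_a)$ and each $\mathfrak{D}'_a$ satisfies the hypotheses of (iii) relative to $\mathfrak{D}$; the induction hypothesis gives $\mathfrak{D}'_a=\mathfrak{D}$ for every $a$, which is absurd since the $\mathfrak{D}'_a$ are non-empty and disjoint. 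This leaves the case that $\mathfrak{D}'$ is connected with $|\mathfrak{D}'|\ge 2$: here one first excludes $|\mathfrak{D}|=1$ by applying (ii) and the claim to $\mathfrak{D}'$ (a degree count in $\mathcal{G}(\mathfrak{D}')$, mirroring the base case, would force $|\mathfrak{D}'|=1$), and then, using that by the claim each member of $\mathfrak{D}'$ meets at most two members of $\mathfrak{D}$ together with $\mathcal{C}(\mathfrak{D}')\subseteq\mathcal{C}(\mathfrak{D})$, deduces $\mathfrak{D}'\subseteq\mathfrak{D}$, after which $\mathfrak{D}\subseteq\mathfrak{D}'$ follows symmetrically. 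I expect this last case of (iii) to be the genuine obstacle: one must extract from "$\mathfrak{D}\cup\mathfrak{D}'$ simple" and "$\mathcal{C}(\mathfrak{D}')\subseteq\mathcal{C}(\mathfrak{D})$" enough rigidity to recover $\mathfrak{D}'$ exactly, even though $\mathcal{C}(\mathfrak{D})$ for a tree $\mathfrak{D}$ need not be a circuit and does not by itself determine $\mathfrak{D}$.
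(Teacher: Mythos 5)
Your parts (i) and (ii) are correct and essentially the paper's own argument: enumerate $\mathfrak{D}$ via Lemma \ref{lm6}(i), bound $N=\bigcup_{D\ne D'}(D\cap D')$ by Lemma \ref{lm6}(iii), and feed a set $B\supseteq N$ of size $m-1$ into Theorem \ref{lm7}; your variant of (ii), taking $B\supsetneq N$ inside $\bigcup_i C_i$ when $\mathfrak{D}$ is not a tree to produce a circuit properly contained in $C'$, is a legitimate reorganization of the paper's equality-case argument. The auxiliary claim you isolate for (iii) is also sound: it is in substance the fact established inside the proof of Lemma \ref{lm6}(i) (in a simple family, a circuit $C$ outside a connected subfamily $\mathfrak{F}$ meets $\bigcup_{D\in\mathfrak{F}}D$ in at most one element), combined with the observation that $C\subseteq\mathcal{C}(\mathfrak{F})$ and simplicity of $\{C\}\cup\mathfrak{F}$ force $C$ to meet $|C|\ge 3$ members of $\mathfrak{F}$ in pairwise distinct singletons; you should only add the one-line check that $C\notin\mathfrak{F}$ (a member contained in $\mathcal{C}(\mathfrak{F})$ is disjoint from all other members, contradicting connectedness when $|\mathfrak{F}|\ge 2$). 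Your base case and the disconnected case of the induction also go through.

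The genuine gap is exactly where you suspect it, in the connected case with $|\mathfrak{D}'|\ge 2$. The step ``by the claim each member of $\mathfrak{D}'$ meets at most two members of $\mathfrak{D}$'' is neither a consequence of the claim nor true as stated: a member lying in $\mathfrak{D}'\cap\mathfrak{D}$ can meet arbitrarily many members of $\mathfrak{D}$ (the centre of a star-shaped tree), and what the Lemma \ref{lm6}(i)-type argument actually gives is that a member $D'\in\mathfrak{D}'-\mathfrak{D}$ meets $\bigcup_{D\in\mathfrak{D}}D$ in at most one point. Even granting that, $\mathfrak{D}'\subseteq\mathfrak{D}$ does not follow: the $\ge 2$ elements of such a $D'$ lying outside $\bigcup_{D\in\mathfrak{D}}D$ need not lie in $\mathcal{C}(\mathfrak{D}')$, since they may all be shared with other members of $\mathfrak{D}'$, so no contradiction with $\mathcal{C}(\mathfrak{D}')\subseteq\mathcal{C}(\mathfrak{D})$ is produced. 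The paper closes precisely this hole by choosing $D'$ carefully: enumerate $\mathfrak{D}\cup\mathfrak{D}'$ with the connected set $\mathfrak{D}$ first (the ``moreover'' part of Lemma \ref{lm6}(i)); the last circuit of the enumeration is not contained in the union of all the others, and any of its private elements lies in $\mathcal{C}(\mathfrak{D}\cup\mathfrak{D}')-\mathcal{C}(\mathfrak{D})$, contradicting $\mathcal{C}(\mathfrak{D}\cup\mathfrak{D}')\subseteq\mathcal{C}(\mathfrak{D})\cup\mathcal{C}(\mathfrak{D}')=\mathcal{C}(\mathfrak{D})$. Finally, ``$\mathfrak{D}\subseteq\mathfrak{D}'$ follows symmetrically'' is not justified, because the hypotheses are not symmetric (only $\mathfrak{D}$ is assumed to be a tree, and the inclusion $\mathcal{C}(\mathfrak{D}')\subseteq\mathcal{C}(\mathfrak{D})$ goes one way); the paper instead argues that if $\emptyset\ne\mathfrak{D}'\subsetneq\mathfrak{D}$, connectedness of $\mathfrak{D}$ yields adjacent $D_1\in\mathfrak{D}'$ and $D_2\in\mathfrak{D}-\mathfrak{D}'$, and since no three members of the tree $\mathfrak{D}$ share a point (Lemma \ref{lm6}(ii)), the point of $D_1\cap D_2$ lies in $\mathcal{C}(\mathfrak{D}')-\mathcal{C}(\mathfrak{D})$, again a contradiction. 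With these two arguments supplied, your induction and case split become unnecessary.
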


\begin{proof}
Enumerate the elements of $\mathfrak{D}$ as in Lemma \ref{lm6}(i). It is clear that with this enumeration we have
$$C_i\nsubseteq \bigcup_{j<i}C_j\quad \text{for all}\ i=2,\ldots,m,$$
where $m=|\mathfrak{D}|.$ By virtue of  Lemma \ref{lm6}(iii), one can choose a subset $B$ of $[n]$ with $|B|=m-1$ so that $B$ contains $\bigcup_{D,D'\in\mathfrak{D},D\ne D'}(D\cap D').$ It now follows from Theorem \ref{lm7} that there exists a circuit $C\in\mathfrak{C}(M)$ such that
$$C\subseteq \bigcup_{i=1}^mC_i-B\subseteq\bigcup_{D\in\mathfrak{D}}D-\bigcup_{D,D'\in\mathfrak{D},D\ne D'}(D\cap D') =\mathcal{C}(\mathfrak{D}).$$

If there is another circuit $C'\in\mathfrak{C}(M)$ with $\mathcal{C}(\mathfrak{D})\subseteq C'$, then since $C\subseteq C'$ are both circuits we must have $C=C'$. This implies that $C'=\mathcal{C}(\mathfrak{D})$ and $B=\bigcup_{D,D'\in\mathfrak{D},D\ne D'}(D\cap D').$ As
$$|\bigcup_{D,D'\in\mathfrak{D},D\ne D'}(D\cap D')|=|B|=m-1,$$ Lemma \ref{lm6}(iii) guarantees that $\mathfrak{D}$ is a tree.

To prove (iii), we first show that $\mathfrak{D}'\subseteq\mathfrak{D}$. Indeed, we have
$$\mathcal{C}(\mathfrak{D})=\mathcal{C}(\mathfrak{D})\cup \mathcal{C}(\mathfrak{D}')\supseteq\mathcal{C}(\mathfrak{D}\cup\mathfrak{D}').$$
Choose an enumeration of elements of $\mathfrak{D}\cup\mathfrak{D}'$ in which the elements of $\mathfrak{D}$ appear first as in Lemma \ref{lm6}(i). If $\mathfrak{D}'\nsubseteq\mathfrak{D}$, then there exists a circuit $D'\in\mathfrak{D}'$ (for instance, $D'$ can be chosen to be the last element in the enumeration) such that
$$D'\nsubseteq\bigcup_{D'\ne D\in\mathfrak{D}\cup\mathfrak{D}'}D.$$
Then for any $d\in D'-\bigcup_{D'\ne D\in\mathfrak{D}\cup\mathfrak{D}'}D$ we have $d\in\mathcal{C}(\mathfrak{D}\cup\mathfrak{D}')-\mathcal{C}(\mathfrak{D}).$ This contradiction shows that $\mathfrak{D}'\subseteq\mathfrak{D}$. Suppose $\mathfrak{D}'\ne\mathfrak{D}$. Then since $\mathfrak{D}$ is connected, there exist $D_1\in\mathfrak{D}'$ and $D_2\in\mathfrak{D}-\mathfrak{D}'$ such that $D_1\cap D_2\ne\emptyset$. The fact that three distinct elements of $\mathfrak{D}$ have empty intersection (see Lemma \ref{lm6}(ii)) yields
$$D_1\cap D_2\nsubseteq\bigcup_{D,D'\in\mathfrak{D}',D\ne D'}(D\cap D').$$
This implies $D_1\cap D_2\subseteq\mathcal{C}(\mathfrak{D}')-\mathcal{C}(\mathfrak{D})$, which contradicts the hypothesis. Hence $\mathfrak{D}=\mathfrak{D}'$.
\end{proof}

\begin{lemma}\label{lm9}
Let $\mathfrak{D}\subseteq\mathfrak{C}(M)$ be simple. Assume that $\mathfrak{D}$ is a generating set of $\mathfrak{C}(M)$. Then for any $C\in\mathfrak{C}(M)$, there exists a subset $\mathfrak{D}'\subseteq \mathfrak{D}$ such that $C\subseteq \bigcup_{D\in\mathfrak{D}'}D$ and
$$D\subseteq C\cup \big(\bigcup_{D'\in\mathfrak{D}'-\{D\}} D'\big)\quad \text{for all } D\in\mathfrak{D}'.$$
If this is the case, then $\mathfrak{D}'$ must be a tree and we have
$C=\mathcal{C}(\mathfrak{D}').$
\end{lemma}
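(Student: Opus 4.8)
The plan is to reformulate the two displayed conditions in terms of the sets $\mathcal{C}(\mathfrak{D}')$, read off the final clause, and then prove existence by induction. Throughout I may relabel $[n]$ so that $\prec$ is the usual order $1<2<\cdots<n$, which affects none of the hypotheses. For $\mathfrak{D}'\subseteq\mathfrak{D}$, intersecting with $D$ shows that the second condition is equivalent to $D\setminus C\subseteq\bigcup_{D'\in\mathfrak{D}'-\{D\}}(D\cap D')$ for all $D\in\mathfrak{D}'$; taking the union over $D$ gives $\big(\bigcup_{D\in\mathfrak{D}'}D\big)\setminus C\subseteq\bigcup_{D\ne D'}(D\cap D')$, hence $\mathcal{C}(\mathfrak{D}')\subseteq\big(\bigcup_{D\in\mathfrak{D}'}D\big)\cap C$. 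Conversely, if $\mathcal{C}(\mathfrak{D}')\subseteq C$ then for each $D\in\mathfrak{D}'$ every element of $D\setminus C$ lies in at least two members of $\mathfrak{D}'$ (else it would be in $\mathcal{C}(\mathfrak{D}')\subseteq C$), so the second condition holds. Thus the two conditions together are equivalent to: $\mathfrak{D}'\neq\emptyset$ and $\mathcal{C}(\mathfrak{D}')\subseteq C\subseteq\bigcup_{D\in\mathfrak{D}'}D$; and since $C\neq\emptyset$, the first condition alone already forces $\mathfrak{D}'\neq\emptyset$.

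This settles the final clause at once: if $\mathfrak{D}'$ satisfies the two conditions then it is a nonempty simple subset of $\mathfrak{C}(M)$ (being contained in $\mathfrak{D}$) with $\mathcal{C}(\mathfrak{D}')\subseteq C$, so, $C$ being a circuit, Lemma~\ref{lm8}(ii) gives $\mathcal{C}(\mathfrak{D}')=C$ and forces $\mathfrak{D}'$ to be a tree. The same remark shows that once $\mathcal{C}(\mathfrak{D}')\subseteq C$ is known for some nonempty $\mathfrak{D}'\subseteq\mathfrak{D}$ one automatically gets $\mathcal{C}(\mathfrak{D}')=C$ and $C\subseteq\bigcup_{D\in\mathfrak{D}'}D$. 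So the whole lemma reduces to the single claim: for every $C\in\mathfrak{C}(M)$ there is a nonempty $\mathfrak{D}'\subseteq\mathfrak{D}$ with $\mathcal{C}(\mathfrak{D}')\subseteq C$.

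For this I would induct on the pair $(|C|,\sum_{i\in C}i)$ ordered lexicographically. If $C\in\mathfrak{D}$, take $\mathfrak{D}'=\{C\}$. Otherwise, since $\mathfrak{D}$ generates $\mathfrak{C}(M)$ there is $D_1\in\mathfrak{D}$ with $bc_\prec(D_1)\subseteq bc_\prec(C)$; as $C\notin\mathfrak{D}$ we have $D_1\ne C$, and since $bc_\prec(D_1)\subseteq bc_\prec(C)\subseteq C$ this forces $\min_\prec(D_1)\notin C$ (otherwise $D_1\subseteq C$, hence $D_1=C$). Put $f_1=\min_\prec(D_1)$; then $D_1\cap C=bc_\prec(D_1)$, $D_1\setminus C=\{f_1\}$, and $f_1<a$ for every $a\in bc_\prec(D_1)$. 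Fix such an $a$. Since $C\nsubseteq D_1$, Theorem~\ref{lm7} applied to $D_1, C$ yields a circuit $\tilde C\subseteq(C\cup D_1)\setminus\{a\}=(C\setminus a)\cup\{f_1\}$, necessarily distinct from $C$ and from $D_1$; one has $|\tilde C|\le|C|$, with equality forcing $\tilde C=(C\setminus a)\cup\{f_1\}$ and hence $\sum_{i\in\tilde C}i=\sum_{i\in C}i-a+f_1<\sum_{i\in C}i$. Thus $\tilde C$ precedes $C$ in our order, so the induction hypothesis provides a tree $\tilde{\mathfrak{D}}\subseteq\mathfrak{D}$ with $\mathcal{C}(\tilde{\mathfrak{D}})=\tilde C$; one then assembles $\mathfrak{D}'$ from $\tilde{\mathfrak{D}}$ by adjoining $D_1$ (discarding, if necessary, the members of $\tilde{\mathfrak{D}}$ that $D_1$ meets) and checks, using the disjointness of broken circuits together with Lemmas~\ref{lm6} and~\ref{lm8}, that $\mathcal{C}(\mathfrak{D}')\subseteq C$. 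By the reduction above this finishes the induction.

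The hard part, and where I expect the real difficulty, is this existence argument — in particular the final assembly step. Two points need genuine care: that $\tilde C$ really can be produced as a circuit with the stated containment (note that the more tempting candidate $(C\setminus bc_\prec(D_1))\cup\{f_1\}$, which removes the whole overlap at once, need not even be dependent, which is exactly why one eliminates a single element of $bc_\prec(D_1)$ and then descends in the order); and that building $\mathfrak{D}'$ from $\tilde{\mathfrak{D}}$ and $D_1$ produces no element lying in exactly one member of $\mathfrak{D}'$ while being outside $C$. Both rest on the simplicity hypothesis on $\mathfrak{D}$ and on Lemmas~\ref{lm6} and~\ref{lm8}, which are tailored to precisely these overlap computations.
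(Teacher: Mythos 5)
Your reduction of the statement is correct and efficient: condition (b) for $\mathfrak{D}'$ is indeed equivalent to $\mathcal{C}(\mathfrak{D}')\subseteq C$, so the whole lemma collapses to producing a nonempty $\mathfrak{D}'\subseteq\mathfrak{D}$ with $\mathcal{C}(\mathfrak{D}')\subseteq C$, after which Lemma~\ref{lm8}(ii) gives $\mathcal{C}(\mathfrak{D}')=C$ and the tree property, exactly as in the paper's second half. Your route to existence, however, is genuinely different from the paper's: the paper argues by contradiction, extracting from the failure of the conclusion a chain $D_i,\ldots,D_m$ and an enumeration $D_1,\ldots,D_{i-1},C,D_i,\ldots,D_m$ to which Theorem~\ref{lm7} is applied once, deleting one element from each broken circuit $bc_\prec(D_j)$ (these are pairwise disjoint by simplicity), and thereby producing a circuit $C'$ whose broken circuit contains no $bc_\prec(D)$, contradicting the generating hypothesis. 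You instead induct on the circuit $C$, use the generating hypothesis only to find a single $D_1$ with $bc_\prec(D_1)\subseteq bc_\prec(C)$, apply Theorem~\ref{lm7} to the pair $D_1,C$, and then try to graft $D_1$ onto the tree $\tilde{\mathfrak{D}}$ obtained for the smaller circuit $\tilde C$. The steps you do carry out (the equivalence, the descent in the order $(|C|,\sum_{i\in C}i)$, $f_1=\min_\prec(D_1)\notin C$, $\tilde C\subseteq(C\setminus\{a\})\cup\{f_1\}$) are all fine.

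The genuine gap is the assembly step, which you yourself flag but leave as an assertion, and the parenthetical recipe you give for it is wrong: discarding the members of $\tilde{\mathfrak{D}}$ that $D_1$ meets can destroy the containment, because deleting a circuit $\tilde D$ from $\tilde{\mathfrak{D}}$ promotes elements of the internal intersections $\tilde D\cap\tilde D'$ (which lie outside $\tilde C$ and need not lie in $C$ at all) into $\mathcal{C}$ of the resulting family. What actually works is the opposite: note first that $f_1\in\tilde C$ (otherwise $\tilde C\subsetneq C$, impossible for circuits); then, when $D_1\notin\tilde{\mathfrak{D}}$, simply adjoin $D_1$ with no discarding, since $\mathcal{C}\big(\tilde{\mathfrak{D}}\cup\{D_1\}\big)=(\tilde C\setminus D_1)\cup\big(D_1\setminus\bigcup_{\tilde D\in\tilde{\mathfrak{D}}}\tilde D\big)\subseteq(\tilde C\setminus\{f_1\})\cup bc_\prec(D_1)\subseteq C$, using $f_1\in\tilde C\subseteq\bigcup_{\tilde D}\tilde D$ and $bc_\prec(D_1)\subseteq C$. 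The corner case $D_1\in\tilde{\mathfrak{D}}$ is not excluded by your argument and needs separate treatment (for instance, delete $D_1$ from $\tilde{\mathfrak{D}}$ and check, via simplicity and Lemma~\ref{lm6}(ii), that the newly exposed intersection points are the minima $\min_\prec(\tilde D)\in bc_\prec(D_1)\subseteq C$). So the proof as written does not yet establish the lemma; the missing verification can be supplied along these lines, but as submitted the key containment $\mathcal{C}(\mathfrak{D}')\subseteq C$ for your assembled $\mathfrak{D}'$ is unproved and the suggested construction of $\mathfrak{D}'$ would fail.
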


\begin{proof}
Suppose on the contrary that there is no such $\mathfrak{D}'.$ If $C\subseteq \bigcup_{D\in\mathfrak{D}}D$ (we will not exclude the case that $C\nsubseteq \bigcup_{D\in\mathfrak{D}}D$ in our argument below), then there must be some $D_m\in\mathfrak{D}$ ($m=|\mathfrak{D}|$) such that
$$D_m\nsubseteq C\cup \big(\bigcup_{D\in\mathfrak{D}_m}D\big),$$
where $\mathfrak{D}_m=\mathfrak{D}-\{D_m\}.$ If $C\subseteq\bigcup_{D\in\mathfrak{D}_m}D$, then there exists again some $D_{m-1}\in\mathfrak{D}_m$ such that
$$D_{m-1}\nsubseteq C\cup \big(\bigcup_{D\in\mathfrak{D}_{m-1}}D\big),$$
where $\mathfrak{D}_{m-1}=\mathfrak{D}-\{D_{m-1},D_m\}.$ Continuing this argument, we eventually get an index $1\leq i\leq m+1$, elements $D_i,\ldots,D_m\in\mathfrak{D}$, and subsets $\mathfrak{D}_j=\mathfrak{D}-\{D_{j},\ldots,D_m\}$ ($j=i,\ldots,m$) such that
$$
\begin{aligned}
C&\nsubseteq \bigcup_{D\in\mathfrak{D}_i}D,\\
D_j&\nsubseteq C\cup \big(\bigcup_{D\in\mathfrak{D}_j}D\big)\ \text{ for all }\ j=i,\ldots,m.
\end{aligned}
$$
(The case $i=m+1$ simply means that $C\nsubseteq \bigcup_{D\in\mathfrak{D}}D$.) Now enumerate the set $\mathfrak{D}_i$ as in Lemma \ref{lm6}(i): $\mathfrak{D}_i=\{D_1,\ldots,D_{i-1}\}.$ Then for $j=2,\ldots,i-1$ we have
$$D_j\nsubseteq \bigcup_{l<j}D_l .$$
So the following enumeration of the set $\mathfrak{D}\cup\{C\}$:
$$D_1,\ldots,D_{i-1},C,D_{i},\ldots,D_m$$
satisfies the hypothesis of Theorem \ref{lm7}. Therefore, if we take $d_j\in bc_\prec(D_j)$ for $j=1,\ldots,m$, then there exists a circuit $C'\in\mathfrak{C}(M)$ such that
$$C'\subseteq C\cup \big( \bigcup_{j=1}^mD_j\big)-\{d_1,\ldots,d_m\},$$
Obviously, $bc_\prec(D_j)\nsubseteq bc_\prec(C')$ for all $j=1,\ldots,m.$ But this contradicts the hypothesis that $\mathfrak{D}$ is a generating set of $\mathfrak{C}(M)$. Hence, there must be a subset $\mathfrak{D}'$ of $\mathfrak{D}$ having the required properties.

Since $D\subseteq C\cup \big(\bigcup_{D'\in\mathfrak{D}'-\{D\}}D'\big)$ we have
$$D-\bigcup_{D'\in\mathfrak{D}'-\{D\}}(D\cap D')\subseteq C\quad \text{for all } D\in\mathfrak{D}'.$$
It follows that
$$\mathcal{C}(\mathfrak{D}')=\bigcup_{D\in\mathfrak{D}}\big(D-\bigcup_{D'\in\mathfrak{D}-\{D\}}(D\cap D')\big)\subseteq C.$$
The last assertion now follows from Lemma \ref{lm8}(ii).
\end{proof}

Let $\mathfrak{D}$ be a simple subset of $\mathfrak{C}(M)$ which is also a tree. We have not yet known whether $\mathcal{C}(\mathfrak{D})$ is a circuit of $M$ (this is true, though, at least in the case where the minimal broken circuits of $M$ are pairwise disjoint, as will be proved below). However, in the following lemma we still use the notation $bc_\prec(\mathcal{C}(\mathfrak{D}))$ to denote the set $\mathcal{C}(\mathfrak{D})-\min_\prec(\mathcal{C}(\mathfrak{D})).$

\begin{lemma}\label{lm10}
Let $\mathfrak{D}\subseteq\mathfrak{C}(M)$ be a non-empty simple subset. Assume further that $\mathfrak{D}$ is a tree. Then there exists a circuit $C\in \mathfrak{D}$ such that $bc_\prec(C)\subseteq bc_\prec(\mathcal{C}(\mathfrak{D})).$
\end{lemma}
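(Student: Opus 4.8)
The plan is to locate the required circuit via two elementary observations, together with a tree argument that handles one stubborn case. Throughout write $e^{*}=\min_\prec(\mathcal{C}(\mathfrak{D}))$; this makes sense because $\mathcal{C}(\mathfrak{D})\neq\emptyset$ by Lemma~\ref{lm8}(i), and then $bc_\prec(\mathcal{C}(\mathfrak{D}))=\mathcal{C}(\mathfrak{D})\setminus\{e^{*}\}$. \emph{Observation 1:} $e^{*}$ lies in a unique member $D^{*}$ of $\mathfrak{D}$, since an element of $\mathcal{C}(\mathfrak{D})$ lies in no intersection $D\cap D'$ with $D\neq D'$. \emph{Observation 2:} if $C\in\mathfrak{D}$ satisfies $C\cap D\subseteq\{\min_\prec(C)\}$ for every $D\in\mathfrak{D}\setminus\{C\}$, then $bc_\prec(C)=C\setminus\{\min_\prec(C)\}$ meets no pairwise intersection, hence $bc_\prec(C)\subseteq\mathcal{C}(\mathfrak{D})$; if moreover $e^{*}\notin C$, then $bc_\prec(C)\subseteq\mathcal{C}(\mathfrak{D})\setminus\{e^{*}\}=bc_\prec(\mathcal{C}(\mathfrak{D}))$, and the lemma holds with that $C$.

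First I would show that every circuit $C\in\mathfrak{D}$ whose minimum $\min_\prec(C)$ is $\prec$-maximal in $\mathfrak{D}$ meets the hypothesis of Observation 2: for $D\in\mathfrak{D}\setminus\{C\}$ with $C\cap D\neq\emptyset$, simplicity gives $C\cap D=\{\min_\prec(C)\}$ or $\{\min_\prec(D)\}$, and in the second case $\min_\prec(D)\in C$ forces $\min_\prec(C)\preceq\min_\prec(D)\preceq\min_\prec(C)$, so $C\cap D=\{\min_\prec(C)\}$ in either case. Consequently, if some $\prec$-maximal circuit $C$ differs from $D^{*}$, then $e^{*}\notin C$ by Observation 1, and Observation 2 finishes.

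The hard part will be the remaining case, where $D^{*}$ is itself the \emph{unique} circuit of $\mathfrak{D}$ with $\prec$-maximal minimum (so one cannot rule out $e^{*}\neq\min_\prec(D^{*})$ for free, and $D^{*}$ may be a bad candidate). If $|\mathfrak{D}|=1$ the statement is trivial; otherwise I would use that $\mathcal{G}(\mathfrak{D})$ is a tree, hence has at least two leaves, and pick a leaf $\ell\neq D^{*}$ with unique neighbour $N\in\mathfrak{D}$. Since $\ell$ meets no member of $\mathfrak{D}$ other than $N$, the only pairwise intersection touching $\ell$ is $\ell\cap N=\{f\}$, where $f\in\{\min_\prec(\ell),\min_\prec(N)\}$ by simplicity. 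If $f=\min_\prec(\ell)$, then $\ell$ satisfies the hypothesis of Observation 2 and $e^{*}\notin\ell$ (as $e^{*}$ lies only in $D^{*}\neq\ell$), so $C:=\ell$ works. If $f=\min_\prec(N)\neq\min_\prec(\ell)$, then $\min_\prec(\ell)$ lies in no pairwise intersection, so $\min_\prec(\ell)\in\mathcal{C}(\mathfrak{D})$ and $e^{*}\preceq\min_\prec(\ell)$; but $\ell\neq D^{*}$ and the uniqueness of the maximizer give $\min_\prec(\ell)\prec\min_\prec(D^{*})$, while $e^{*}\in D^{*}$ gives $\min_\prec(D^{*})\preceq e^{*}$, so $e^{*}\preceq\min_\prec(\ell)\prec\min_\prec(D^{*})\preceq e^{*}$, a contradiction; hence this sub-case cannot occur.

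In short, the only delicate point is when the carrier $D^{*}$ of $e^{*}$ is the unique circuit of maximal minimum: the greedy choice of a good circuit is then blocked, and the tree hypothesis must be used, either to exhibit a suitable leaf or to see --- via the minimality of $e^{*}$ --- that the bad configuration is impossible. All other steps are routine bookkeeping with the definition of $\mathcal{C}(\mathfrak{D})$ and with the fact that in a simple subset each pairwise intersection is a singleton equal to one of the two minima involved.
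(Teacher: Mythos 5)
Your proof is correct, but it is organized quite differently from the paper's. The paper takes the two leaves $C_1,C_2$ of the tree $\mathcal{G}(\mathfrak{D})$, uses the counting identity of Lemma~\ref{lm6}(iii) (the union of pairwise intersections has exactly $|\mathfrak{D}|-1$ elements and sits inside the set of minima) to conclude that at least one leaf is attached to the rest through its own minimum, hence has its broken circuit inside $\mathcal{C}(\mathfrak{D})$, and then splits into two cases, the harder of which is settled by a monotonicity argument for the minima along the path joining the two leaves. You instead pivot on the unique carrier $D^*$ of $e^*=\min_\prec(\mathcal{C}(\mathfrak{D}))$: any circuit of $\mathfrak{D}$ whose minimum is $\prec$-maximal automatically satisfies $C\cap D\subseteq\{\min_\prec(C)\}$ for every other $D$ (this step uses only simplicity, not the tree), so such a circuit finishes the proof unless it coincides with $D^*$; in the residual case a single leaf $\ell\ne D^*$ suffices, the bad sub-case $\ell\cap N=\{\min_\prec(N)\}\ne\{\min_\prec(\ell)\}$ being excluded by the chain $e^*\preceq\min_\prec(\ell)\prec\min_\prec(D^*)\preceq e^*$. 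This route avoids Lemma~\ref{lm6}(iii) and the path argument altogether and isolates exactly where the tree hypothesis is needed (existence of a leaf different from $D^*$), at the price of a slightly more intricate case split; the paper's route reuses machinery (Lemma~\ref{lm6}) already established for other purposes and yields along the way the intermediate fact that some leaf has its whole broken circuit inside $\mathcal{C}(\mathfrak{D})$. Your appeal to Lemma~\ref{lm8}(i) to guarantee $\mathcal{C}(\mathfrak{D})\ne\emptyset$ is legitimate, since that lemma precedes this one and does not depend on it.
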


\begin{proof}
The case $|\mathfrak{D}|=1$ is trivial, so we will assume that $|\mathfrak{D}|\geq2$. Then it is a basic fact in graph theory that the tree $\mathcal{G}(\mathfrak{D})$ has at least two leaves; see, e.g., \cite[Proposition 4.2]{BM}. Thus there are two circuits $C_1,C_2\in\mathfrak{D}$ such that
$$\big|C_i\cap\big(\bigcup_{C\in\mathfrak{D}-\{C_i\}}C\big)\big|= 1\quad \text{for } i=1,2.$$
Denote by $\mathrm{m}(\mathfrak{D})$ the set $\{\min_\prec(C):C\in\mathfrak{D}\}$. Recall that $$\bigcup_{C,C'\in\mathfrak{D},C\ne C'}(C\cap C')\subseteq \mathrm{m}(\mathfrak{D}),$$
and since $\mathfrak{D}$ is a tree,
$$\big|\bigcup_{C,C'\in\mathfrak{D},C\ne C'}(C\cap C')\big|=|\mathfrak{D}|-1\geq |\mathrm{m}(\mathfrak{D})|-1.$$
Thus there might be at most one element of $\mathrm{m}(\mathfrak{D})$ which is not in $\bigcup_{C,C'\in\mathfrak{D},C\ne C'}(C\cap C').$
It follows that $C_i\cap\big(\bigcup_{C\in\mathfrak{D}-\{C_i\}}C\big)={\min_\prec(C_i)}$ for either $i=1$ or $i=2.$ Let us assume, say, that $i=1$. Then
$$bc_\prec(C_1)=C_1-\mathrm{min}_\prec(C_1)=C_1-\bigcup_{C\in\mathfrak{D}-\{C_1\}}(C_1\cap C)\subseteq \mathcal{C}(\mathfrak{D}).$$
Consider the following two cases:

Case 1: $C_2\cap\big(\bigcup_{C\in\mathfrak{D}-\{C_2\}}C\big)={\min_\prec(C_2)}$. Then we also have
$bc_\prec(C_2)\subseteq \mathcal{C}(\mathfrak{D})$ as above.
 Since
$bc_\prec(C_1)\cap bc_\prec(C_2)=\emptyset,$ it follows that $\min_\prec (\mathcal{C}(\mathfrak{D}))\not\in bc_\prec(C_1)$ or $\min_\prec(\mathcal{C}(\mathfrak{D}))\not\in bc_\prec(C_2)$. Hence, we get either $bc_\prec(C_1)\subseteq bc_\prec(\mathcal{C}(\mathfrak{D}))$ or $bc_\prec(C_2)\subseteq bc_\prec(\mathcal{C}(\mathfrak{D})).$

Case 2: $C_2\cap\big(\bigcup_{C\in\mathfrak{D}-\{C_2\}}C\big)\ne{\min_\prec(C_2)}$. Then $\min_\prec(C_2)\in\mathcal{C}(\mathfrak{D})$. Let $D_1D_2\ldots D_s$ be a path in the intersection graph $\mathcal{G}(\mathfrak{D})$ which connects $C_1$ and $C_2$ ($C_1=D_1,C_2=D_s$). Note that $\min_\prec(D_s)\ne D_{s-1}\cap D_s$ because $\min_\prec(D_s)=\min_\prec(C_2)\in\mathcal{C}(\mathfrak{D})$. Hence, $D_{s-1}\cap D_s={\min_\prec(D_{s-1})}$. Consequently, $D_{i}\cap D_{i+1}={\min_\prec(D_{i})}$ for $i=1,\ldots,s-1$ since any three distinct elements of $\mathfrak{D}$ have empty intersection, by Lemma \ref{lm6}(ii). From this we get
$$\mathrm{min}_\prec(C_2)=\mathrm{min}_\prec(D_s)< \mathrm{min}_\prec(D_{s-1})<\cdots< \mathrm{min}_\prec(D_1)=\mathrm{min}_\prec(C_1).$$
Thus $\min_\prec(\mathcal{C}(\mathfrak{D}))$, which is not greater than $\min_\prec(C_2)$, does not belong to $bc_\prec(C_1)$. This yields $bc_\prec(C_1)\subseteq bc_\prec(\mathcal{C}(\mathfrak{D}))$.
\end{proof}

We are now ready to prove Theorem \ref{th41}.

\begin{proof}[Proof of Theorem \ref{th41}.]

(i)$\Rightarrow$(ii): Assume $\mathcal{I}_\prec(M)$ is a complete intersection. Then $\mathcal{I}_\prec(M)=(u_1,\ldots,u_h)$, where $u_1,\ldots,u_h$ are pairwise coprime monomials. Let $C_1,\ldots,C_h$ be circuits of $M$ such that $x_{bc_\prec(C_i)}=u_i$ for $i=1,\ldots,h.$ Then the broken circuits $bc_\prec(C_i)$ are pairwise disjoint. We need to show that if $C\in \mathfrak{C}(M)$ and $bc_\prec(C)$ is a minimal broken circuit of $M$ then $C=C_j$ for some $1\leq j\leq h$. Indeed, one checks that $\{bc_\prec(C_i): i=1,\ldots,h\}$ is the set of minimal broken circuits of $M$, so $bc_\prec(C)=bc_\prec(C_j)$ for some $1\leq j\leq h$. If $C\ne C_j$, then by Theorem \ref{lm7}, there exists a circuit $C'$ of $M$ with
$$C'\subseteq C\cup C_j-\{e\}=bc_\prec(C_j)\cup \mathrm{min}_\prec(C_j)\cup \mathrm{min}_\prec(C)-\{e\},$$
where $e\in bc_\prec(C_j).$ Observe that one has either $bc_\prec(C')\subseteq C-\{e\}$ or $bc_\prec(C')\subseteq C_j-\{e\}$. From this it easily follows that $bc_\prec(C_i)\nsubseteq bc_\prec(C')$ for all $i=1,\ldots,h,$ which is a contradiction.

(ii)$\Rightarrow$(iii): Let $\mathfrak{D}$ be the subset of $\mathfrak{C}(M)$ such that $\{bc_\prec(C): C\in\mathfrak{D}\}$ is the set of minimal broken circuits of $M$. Then $\mathfrak{D}$ is simple because the minimal broken circuits of $M$ are pairwise disjoint. Since $\mathfrak{D}$ is a generating set of $\mathfrak{C}$, it follows from Lemma \ref{lm9} that
$$\mathfrak{C}(M)\subseteq\{\mathcal{C}(\mathfrak{D}'): \mathfrak{D}'\subseteq \mathfrak{D}\text{ is a tree} \}.$$
Now let $\mathfrak{D}'\subseteq \mathfrak{D}$ be a tree. By Lemma \ref{lm8}(i), there exists a circuit $C\in\mathfrak{C}(M)$ such that $C\subseteq \mathcal{C}(\mathfrak{D}').$ As we have just seen, $C=\mathcal{C}(\mathfrak{D}'')$ for some tree $\mathfrak{D}''\subseteq\mathfrak{D}.$ It then follows from Lemma \ref{lm8}(iii) that
$$\mathcal{C}(\mathfrak{D}')=\mathcal{C}(\mathfrak{D}'')=C\in\mathfrak{C}(M).$$
Therefore,
$$\mathfrak{C}(M)=\{\mathcal{C}(\mathfrak{D}'): \mathfrak{D}'\subseteq \mathfrak{D}\text{ is a tree}\}.$$

(iii)$\Rightarrow$(i): Since $\mathfrak{D}$ is simple, the monomials $x_{bc_\prec(C)}$ for $C\in\mathfrak{D}$ are pairwise coprime. Thus it suffices to show that $\mathcal{I}_\prec(M)=(x_{bc_\prec(C)}: C\in\mathfrak{D})$, or in other words, $\mathfrak{D}$ is a generating set of $\mathfrak{C}(M).$ The latter fact is, however, merely a consequence of Lemma \ref{lm10}.
\end{proof}

\begin{example}
Let $G$ be the graph in Figure \ref{fig1}, with the given numbering of the edges. Let $M=M(G)$ be the cycle matroid of $G$. We have
$$\begin{aligned}
\mathfrak{C}(M)=\big\{&\{1,2,8\},\{3,4,9\},\{5,6,10\},\{7,8,9,10\},\{1,2,9,10,7\},\{3,4,10,7,8\},\\
&\{5,6,7,8,9\},\{1,2,3,4,10,7\} ,\{3,4,5,6,7,8\},\{1,2,9,5,6,7\},\{1,2,3,4,5,6,7\}\big\}.
\end{aligned}$$
With the ordinary ordering of $\{1,\ldots,10\}$, the minimal broken circuits of $M$ are not pairwise disjoint (e.g., $\{2,8\}$ and $\{8,9,10\}$). However, this can be the case for other orderings. Consider, say, the ordering $10\prec9\prec\cdots\prec1. $ In this case, the minimal broken circuits of $M$ are $\{1,2\},\{3,4\},\{5,6\},\{7,8,9\}$, and the ideal
$$
\mathcal{I}_\prec(M)=(x_1x_2,x_3x_4,x_5x_6,x_7x_8x_9)$$
is a complete intersection.
\end{example}

\begin{figure}[htb]
\label{fig1}
 \begin{tikzpicture}[scale=1.5]
    \draw (0,0) -- (0,1) -- (2,1) -- (2,0) -- (0,0);
    \draw (0,1) -- (1,2) -- (2,1);
    \draw (2,1) -- (3,0.5) -- (2,0);
    \draw (0,0) -- (1.75,-1) -- (2,0);
    \draw (0.5,1.75) node {1};
    \draw (1.5,1.75) node {2};
    \draw (2.5,1) node {3};
    \draw (2.5,0) node {4};
    \draw (2,-0.5) node {5};
    \draw (0.5,-0.5) node {6};
    \draw (0,0.5)  node [left] {7};
    \draw (1,1)  node [below] {8};
    \draw (2,0.5)  node [left] {9};
    \draw (1,1)  node [below] {8};
    \draw (1,0)  node [above] {10};
   \end{tikzpicture}
\centerline{Figure \ref{fig1}}
\end{figure}

The above example illustrates a somewhat more general fact which holds for arbitrary triangulations of simple polygons. Recall that a simple polygon can always be partitioned into triangles by its diagonals; see, e.g., \cite[Theorem 1.2.3]{OR}.

\begin{corollary}\label{co48}
Let $G$ be a triangulation of a simple polygon. Denote by $M(G)$ the cycle matroid of $G$. Then there exists an ordering $\prec$ of the edges of $G$ such that the minimal broken circuits of the ordered matroid $(M(G),\prec)$ are pairwise disjoint.
\end{corollary}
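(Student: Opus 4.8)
The plan is to exhibit a concrete simple subset realising condition (iii) of Theorem \ref{th41}. Let $T_1,\dots,T_k$ be the (open) triangular faces of the triangulated polygon $G$, and let $\mathfrak{D}=\{C_1,\dots,C_k\}\subseteq\mathfrak{C}(M(G))$ be the corresponding $3$-circuits, where $C_i$ is the edge set of $T_i$. The first step is the classical observation that the intersection graph $\mathcal{G}(\mathfrak{D})$ is the \emph{dual tree} of the triangulation: two triangles meet, as edge sets, in exactly one diagonal when they lie on opposite sides of it and in the empty set otherwise. For a triangulated $n$-gon there are $k=n-2$ triangles and $n-3$ diagonals, each diagonal incident to exactly two triangles, so $\mathcal{G}(\mathfrak{D})$ is connected with $n-2$ vertices and $n-3$ edges, hence a tree; see, e.g., \cite{OR}. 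In particular any two distinct members of $\mathfrak{D}$ intersect in at most one element and $\mathcal{G}(\mathfrak{D})$ has no cycle, so Remark \ref{rm43} yields an ordering $\prec$ of the edge set of $G$ with respect to which $\mathfrak{D}$ is a \emph{simple} subset of $\mathfrak{C}(M(G))$.

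The second step is to check that $\mathfrak{D}$ is a generating set of $\mathfrak{C}(M(G))$ with respect to this $\prec$. Fix a circuit $Z$ of $M(G)$, that is, a cycle of $G$, and regard it as a simple closed curve in the plane. Let $\mathfrak{D}'\subseteq\mathfrak{D}$ consist of those triangular faces lying in the interior region bounded by $Z$; then $\mathfrak{D}'\ne\emptyset$. Every edge of $G$ strictly inside $Z$ is a diagonal shared by two triangles of $\mathfrak{D}'$, while every edge of $Z$ lies on exactly one triangle of $\mathfrak{D}'$, so $\mathcal{C}(\mathfrak{D}')=Z$; and $\mathfrak{D}'$ is connected in $\mathcal{G}(\mathfrak{D})$ because the open interior of $Z$ is connected and a path between two of its triangles crosses only diagonals interior to $Z$. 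Thus $\mathfrak{D}'$ is a subtree of $\mathcal{G}(\mathfrak{D})$ and, being a subset of $\mathfrak{D}$, is itself simple, so Lemma \ref{lm10} gives a circuit $C\in\mathfrak{D}'$ with $bc_\prec(C)\subseteq bc_\prec(\mathcal{C}(\mathfrak{D}'))=bc_\prec(Z)$. Hence $x_{bc_\prec(Z)}$ is divisible by $x_{bc_\prec(C)}$; as $Z$ was arbitrary, $\{x_{bc_\prec(C)}:C\in\mathfrak{D}\}$ generates $\mathcal{I}_\prec(M(G))$, i.e.\ $\mathfrak{D}$ is a generating set.

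The third step is immediate: by the remark following the definition of a generating set, $\{bc_\prec(C):C\in\mathfrak{D}\}$ contains every minimal broken circuit of $(M(G),\prec)$, and since $\mathfrak{D}$ is simple these broken circuits are pairwise disjoint; a fortiori so are the minimal broken circuits of $(M(G),\prec)$. (Alternatively, the identity $\mathcal{C}(\mathfrak{D}')=Z$ established above shows $\mathfrak{C}(M(G))\subseteq\{\mathcal{C}(\mathfrak{D}'):\mathfrak{D}'\subseteq\mathfrak{D}\text{ a subtree}\}$, and the reverse inclusion follows from Lemma \ref{lm8}(i),(iii) exactly as in the proof of Theorem \ref{th41}, so that condition (iii) of Theorem \ref{th41} holds.)

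The substantive point, and the main obstacle, is the planar-topology input in the second step: that an arbitrary cycle $Z$ of the graph $G$ bounds a region which is a union of triangular faces whose dual is connected and whose ``outer'' edge set is exactly $Z$. This rests on the Jordan curve theorem together with the facts that every bounded face of a triangulated simple polygon is a triangle and that such a graph has no pendant edges; once this is granted, everything else is a routine application of Remark \ref{rm43} and Lemmas \ref{lm8} and \ref{lm10}.
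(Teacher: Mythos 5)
Your proof is correct and takes essentially the same route as the paper: the same set $\mathfrak{D}$ of triangle boundaries, the dual-tree fact, Remark \ref{rm43} to obtain a suitable ordering, and then the machinery of Theorem \ref{th41}/Lemma \ref{lm10} to conclude. The only difference is that you supply the planar (Jordan-curve) verification that every cycle $Z$ satisfies $Z=\mathcal{C}(\mathfrak{D}')$ for the subtree of triangles inside it, a step the paper's proof leaves implicit in the phrase ``it is then clear.''
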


\begin{proof}
Denote by $\mathfrak{C}$ the set of circuits of $M(G)$. Let $\mathfrak{D}$ be the subset of $\mathfrak{C}$ consisting of circuits which are boundaries of triangles of $G$. Then the intersection graph $\mathcal{G}(\mathfrak{D})$ of $\mathfrak{D}$ is a tree; see \cite[Lemma 1.2.6]{OR}. So by Remark \ref{rm43}, $\mathfrak{D}$ is a simple subset of $\mathfrak{C}$ with respect to a suitable ordering $\prec$ of the edges of $G$. It is then clear that $\mathfrak{C}$ can be described as in Theorem \ref{th41}, i.e.,
$$
\mathfrak{C}=\{\mathcal{C}(\mathfrak{D}'): \mathfrak{D}'\subseteq \mathfrak{D}\text{ is a tree}\}.
$$
Thus the minimal broken circuits of $(M(G),\prec)$, which are the broken circuits of the elements of $\mathfrak{D}$, are pairwise disjoint.
\end{proof}

In the following theorem, we verify Conjecture \ref{cj} for ordered matroids with disjoint minimal broken circuits. A formula for the Poincar\'{e} polynomials of the Orlik-Solomon algebras of those matroids is also derived. It can be considered as a generalization of a formula for the chromatic polynomials of the graphs that admit a broken circuit complex with disjoint minimal broken circuits obtained by Wilf in \cite{W}.

\begin{theorem}\label{th48}
Let $(M,\prec)$ be an ordered simple matroid on $[n]$. Assume that the minimal broken circuits of $M$ are pairwise disjoint. Then we have the following formula for the Poincar\'{e} polynomial of the Orlik-Solomon algebra of $M$:
$$\pi(\mathbf{A}(M),t)=(t+1)^{n-\sum_{i=1}^hq_i}\prod_{i=1}^h\big((t+1)^{q_i}-t^{q_i}\big),$$
where $q_1,\ldots,q_h$ are the sizes of the minimal broken circuits. Moreover, the following conditions are equivalent:
\begin{enumerate}
\item $\pi(\mathbf{A}(M),t)$ factors completely over $\mathbb{Z}$;
\item $q_i=2$ for all $i=1,\ldots,h$;
\item $M$ is supersolvable;
\item $\mathbf{A}(M)$ is Koszul.
\end{enumerate}
\end{theorem}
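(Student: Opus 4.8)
The plan is to treat the Poincar\'{e}-polynomial formula separately and then establish the four equivalences along the cycle $(1)\Leftrightarrow(2)$ and $(2)\Rightarrow(3)\Rightarrow(4)\Rightarrow(2)$. For the formula: since the minimal broken circuits are pairwise disjoint, Theorem \ref{th41} gives that $\mathcal{I}_\prec(M)$ is a complete intersection, minimally generated by the pairwise coprime monomials $x_{bc_\prec(C_1)},\dots,x_{bc_\prec(C_h)}$ of degrees $q_1,\dots,q_h$; hence
$$H_{S/\mathcal{I}_\prec(M)}(t)=\frac{\prod_{i=1}^{h}(1-t^{q_i})}{(1-t)^{n}}.$$
Substituting this into Proposition \ref{th26} and putting $t=s/(1+s)$ (so $s=t/(1-t)$ and $1-t=1/(1+s)$) yields $\pi(\mathbf{A}(M),s)=(1+s)^{\,n-\sum q_i}\prod_{i=1}^{h}\big((1+s)^{q_i}-s^{q_i}\big)$, which is the asserted formula after renaming $s$ to $t$; here $n-\sum q_i\ge 0$ because the broken circuits $bc_\prec(C_i)$ are disjoint subsets of $[n]$.

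For $(2)\Leftrightarrow(1)$: if every $q_i=2$ then each factor $(t+1)^{q_i}-t^{q_i}$ equals $2t+1$, so $\pi(\mathbf{A}(M),t)=(t+1)^{\,n-2h}(2t+1)^{h}$ is a product of linear polynomials, giving $(2)\Rightarrow(1)$. For the converse I would prove the elementary fact that for $q\ge 3$ the polynomial $(t+1)^{q}-t^{q}$ is not a product of linear polynomials: it has degree $q-1\ge 2$ and strictly positive coefficients, hence no non-negative real root, while a direct computation shows that its only possible real root is $-\tfrac12$, which occurs (and is simple) exactly when $q$ is even; thus it always has a non-real root, hence an irreducible factor of degree $\ge 2$ over $\mathbb{Q}$. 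If some $q_{i_0}\ge 3$, this factor divides $\pi(\mathbf{A}(M),t)$, so $\pi(\mathbf{A}(M),t)$ is not a product of linear polynomials; contrapositively, $(1)$ forces all $q_i=2$.

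It remains to close the cycle with $(3)\Rightarrow(4)\Rightarrow(2)$ and $(2)\Rightarrow(3)$. The implication $(3)\Rightarrow(4)$ is the known fact that a supersolvable matroid has Koszul Orlik--Solomon algebra (see, e.g., \cite{SY}). For $(4)\Rightarrow(2)$ I would argue contrapositively: if some minimal broken circuit $bc_\prec(C)$ has size $q\ge 3$, then $\partial e_{C}$ does not lie in the ideal generated by $J(M)_2$, so $J(M)$ is not generated by quadrics and $\mathbf{A}(M)$ is not Koszul. Indeed, $M$ being simple, $J(M)_2$ is spanned by the $\partial e_{C'}$ with $C'$ a three-element circuit; any nonzero standard monomial occurring in a product $e_{T}\,\partial e_{C'}$, $T\subseteq[n]$, $|T|=q-2$, whose support is contained in $C$ forces $C'\setminus\{j\}\subseteq C$ for some $j\in C'$, hence $|C'\cap C|\ge 2$; since $bc_\prec(C')$ is itself a size-$2$ minimal broken circuit it is disjoint from $bc_\prec(C)$, and this forces $C'\cap C=\{\min_\prec(C'),\min_\prec(C)\}$ with $\min_\prec(C)\in bc_\prec(C')$, so every such monomial is divisible by $e_{\min_\prec(C)}$; but $e_{bc_\prec(C)}$ occurs in $\partial e_{C}$ with coefficient $\pm1$ and is not divisible by $e_{\min_\prec(C)}$, a contradiction. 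Finally, for $(2)\Rightarrow(3)$ one shows that an ordered simple matroid whose minimal broken circuits are pairwise disjoint and all of size $2$ is supersolvable: by Theorem \ref{th41}(iii) one inducts on $|\mathfrak{D}|$, removing a leaf $C$ of the intersection forest $\mathcal{G}(\mathfrak{D})$, which presents $M$ either as a direct sum $U_{2,3}\oplus M'$ or as a parallel connection of $M'$ with $U_{2,3}$ along a single point, where $M'$ again satisfies the hypothesis; as supersolvability is inherited through direct sums and through parallel connection at a point, $M$ is supersolvable. (Alternatively, one identifies $M$ with the cycle matroid of the chordal graph obtained by gluing triangles along edges according to $\mathcal{G}(\mathfrak{D})$.) Combining, $(1)\Leftrightarrow(2)\Rightarrow(3)\Rightarrow(4)\Rightarrow(2)$, so all four conditions are equivalent; in particular this verifies Conjecture \ref{cj} for the matroids in question.

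The hardest part will be $(2)\Rightarrow(3)$: extracting supersolvability, a statement about modular flags in the lattice of flats, from the purely combinatorial hypothesis on the minimal broken circuits; and, to a lesser extent, making the monomial bookkeeping in $(4)\Rightarrow(2)$ precise. It is exactly this implication $(4)\Rightarrow(2)$ that makes the otherwise-open direction of Conjecture \ref{cj} accessible in this setting.
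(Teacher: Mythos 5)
Your derivation of the Poincar\'e polynomial, the equivalence $(1)\Leftrightarrow(2)$, the implication $(3)\Rightarrow(4)$, and the implication $(4)\Rightarrow(2)$ are correct and essentially coincide with the paper's proof: the paper likewise computes $H_{S/\mathcal{I}_\prec(M)}(t)=\prod_{i}(1-t^{q_i})/(1-t)^n$ from the complete intersection property (Theorem \ref{th41}) and substitutes via Proposition \ref{th26}; it rules out complete factorization when some $q_i\geq 3$ via the non-real roots of $(t+1)^{q_i}-t^{q_i}$ (your real-root count is an equivalent argument); and its proof of $(4)\Rightarrow(2)$ is the same disjointness-of-broken-circuits computation as yours, phrased as ``each $\partial e_{C_i}$ is a minimal generator of $J(M)$'' rather than ``$\partial e_{C}\notin (J(M)_2)$''.

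The genuine divergence is $(2)\Rightarrow(3)$, and this is also where your proposal has a gap. The paper disposes of this step with a citation: by Bj\"orner--Ziegler \cite[Theorem 2.8]{BZ}, for \emph{any} simple matroid the existence of an ordering in which all minimal broken circuits have size $2$ implies (indeed is equivalent to) supersolvability; no disjointness is needed. Your structural induction can be made to work, but two points are unjustified as written. First, $\mathcal{G}(\mathfrak{D})$ need not be a forest (several triangles through a common point give a clique in the intersection graph), so ``removing a leaf of the intersection forest'' is not available; what is true, by Lemma \ref{lm6}(i)--(ii), is that one can peel off some $C\in\mathfrak{D}$ meeting the union of the remaining members in at most one point $p$, and such a $C$ is a leaf of every tree of $\mathfrak{D}$ containing it; with this, Theorem \ref{th41}(iii) lets one check that $M$ is either a direct sum of $U_{2,3}$ with the restriction $M'$ of $M$ to $[n]-(C-\{p\})$, or a parallel connection of $M'$ with $U_{2,3}$ at $p$, and that $M'$ inherits the hypothesis. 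Second, and more importantly, the assertion that supersolvability is inherited under parallel connection at a point is exactly the load-bearing fact, and you neither prove nor cite it. It is true --- for instance, the ground set of $M'$ is a modular hyperplane of the parallel connection, since every line through one of the two new points meets it, and one extends a modular chain of $M'$ using transitivity of modularity; alternatively one identifies $M$ with the cycle matroid of a graph obtained by gluing triangles along edges (plus bridges), which is chordal, and quotes Stanley's theorem on chordal graphs --- but without such an argument or a reference the induction does not close. The most economical repair is to replace your $(2)\Rightarrow(3)$ by the citation of \cite[Theorem 2.8]{BZ}, as the paper does.
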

Note that the formula for the Poincar\'{e} polynomial can also be deduced from \cite{BrOx} since the underlying simplicial complex can be seen as an iterated join of boundaries of simplices. Then the corresponding characteristic polynomial factors nicely and one concludes by applying, e.g., \cite[Corollary 7.10.3]{B}.

\begin{proof}
Let $C_1,\ldots,C_h$ be the circuits of $M$ such that $\{bc_\prec(C_i):i=1,\ldots,h\}$ is the set of minimal broken circuits of $M$. Then $\mathcal{I}_\prec(M)=(x_{bc_\prec(C_i)}:i=1,\ldots,h)$ is a complete intersection. In this case, the Hilbert series of the ring $S/\mathcal{I}_\prec(M)$ is easily computable:
$$H_{S/\mathcal{I}_\prec(M)}(t)=\frac{\prod_{i=1}^h(1-t^{q_i})}{(1-t)^n}.$$
Proposition \ref{th26} now yields
$$\pi(\mathbf{A}(M),t)=H_{S/\mathcal{I}_\prec(M)}\Big(\frac{t}{t+1}\Big)=(t+1)^{n-\sum_{i=1}^hq_i}\prod_{i=1}^h\big((t+1)^{q_i}-t^{q_i}\big).$$
Since $(t+1)^{q_i}-t^{q_i}=\prod_{j=1}^{q_i}(t+1-\zeta^jt)$, where $\zeta$ is a primitive $q_i$th root of unity, it follows from the above equation that $\pi(\mathbf{A}(M),t)$ factors completely over $\mathbb{Z}$ if and only if $q_i=2$ for all $i=1,\ldots,h$ (note that $q_i\geq2$ as $M$ is simple). This proves (i)$\Leftrightarrow$(ii). The implication (ii)$\Rightarrow$(iii) is true for all simple matroids; see \cite[Theorem 2.8]{BZ}. It is well-known that if $M$ is supersolvable, then the Orlik-Solomon ideal $J(M)$ has a quadratic Gr\"{o}bner basis; see \cite[Theorem 7.10.2]{B} and \cite[Theorem 2.8]{BZ}. So the implication (iii)$\Rightarrow$(iv), which now follows from a result due to Fr\"{o}berg (see \cite{F}), is also true in general. Finally, in order to prove the implication (iv)$\Rightarrow$(ii), recall that the Koszulness of $\mathbf{A}(M)$ implies the quadraticity of the Orlik-Solomon ideal $J(M)$, it suffices to show that $\partial e_{C_i}$ are minimal generators of $J(M)$ for $i=1,\ldots,h$. From the description of $\mathfrak{C}(M)$ in Theorem \ref{th41} we easily get $J(M)=(\partial e_{C_i}:i=1,\ldots,h)$ (this can also be seen from a result of Bj\"{o}rner \cite[Theorem 7.10.2]{B} that $\{\partial e_{C_i}:i=1,\ldots,h\}$ is a Gr\"{o}bner basis of $J(M)$ with respect to the lexicographical order). Thus if $\partial e_{C_i}$ is not a minimal generator of $J(M)$, then
$$\partial e_{C_i}=\sum_{j\ne i}a_j \partial e_{C_j},\quad a_j\in E.$$
It follows that there must be some $j\ne i$ and some $e\in C_j$ such that $C_j-\{e\}\subseteq bc_\prec(C_i)$. But this is impossible because $bc_\prec(C_i)\cap bc_\prec(C_j)=\emptyset.$ The theorem has been proved.
\end{proof}

\begin{example}
Let $G$ be a triangulation of a simple polygon of $\ell$ vertices. Then $G$ has $2\ell-3$ edges and consists of $\ell-2$ triangles; see, e.g., \cite[Theorem 1.2.3 and Lemma 1.2.4]{OR}. By Corollary \ref{co48}, there exists an ordering $\prec$ of the edges of $G$ such that the minimal broken circuits of the matroid $(M(G),\prec)$ are pairwise disjoint. Note that all these minimal broken circuits have cardinality 2 since they come from triangles of $G$. So by Corollary \ref{th27} and Theorem \ref{th48} we obtain a known formula for the chromatic polynomial of $G$:
$$
\begin{aligned}
\chi(G,t)&=t^\ell\pi(\mathbf{A}(M(G)),-t^{-1})\\
&=t^\ell(-t^{-1}+1)^{2\ell-3-2(\ell-2)}(2(-t)^{-1}+1)^{\ell-2}=t(t-1)(t-2)^{\ell-2}.
\end{aligned}
$$
\end{example}

We now improve Wilf's upper bounds on the coefficients of the chromatic polynomial of a maximal planar graph in \cite[Theorem 4]{W}. Recall that a planar graph $G$ is called {\it maximal} if one cannot add a new edge (on the given vertex set of $G$) to form another planar graph. It is well-known that a maximal planar graph $G$ with $\ell\geq 3$ vertices has $2\ell-4$ faces, and every face of $G$ (including the outer face) is bounded by a triangle. As a key step in proving \cite[Theorem 4]{W}, Wilf showed that for a maximal planar graph $G$ with $\ell\geq 3$ vertices, there exists an ordering of the edges of $G$ so that the cycle matroid $M(G)$ has at least $\ell-2$ pairwise disjoint broken circuits, cf. \cite[Theorem 3]{W}. This can be sharpen as follows.

\begin{proposition}\label{pr49}
Let $G$ be a maximal planar graph with $\ell\geq 3$ vertices. Then there exists an ordering of the edges of $G$ so that the number of  pairwise disjoint broken circuits of $M(G)$ with respect to this ordering is bounded below by $\ell-2+\lfloor \ell/4\rfloor$. Moreover, if the dual graph of $G$ contains no triangles, then the lower bound can be improved to $\ell-3+\lceil \ell/3\rceil$.
\end{proposition}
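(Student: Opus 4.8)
The plan is to translate the problem to the planar dual of $G$ and reduce it to a statement about induced forests in a cubic planar graph. Recall that a maximal planar graph $G$ with $\ell$ vertices has $3\ell-6$ edges and $2\ell-4$ triangular faces, so its planar dual $G^{*}$ is a connected cubic planar graph on $N:=2\ell-4$ vertices, and it is a \emph{simple} graph since two distinct triangular faces of $G$ share at most one edge. The first step is to observe that for a family $\mathfrak D$ of faces of $G$ the intersection graph $\mathcal G(\mathfrak D)$ coincides with the subgraph of $G^{*}$ induced on $\mathfrak D$. Hence, whenever this induced subgraph is a forest, Remark \ref{rm43} applies (it has no cycles at all, and any two of its members meet in at most one edge), so $\mathfrak D$ is simple with respect to some ordering $\prec$ of the edges of $G$; equivalently, the broken circuits $bc_\prec(F)$, $F\in\mathfrak D$, are pairwise disjoint. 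Consequently the maximal number of pairwise disjoint broken circuits of $M(G)$, over all orderings, is at least the number of vertices in a largest induced forest of $G^{*}$, and it is enough to show that $G^{*}$ has an induced forest on at least $\ell-2+\lfloor\ell/4\rfloor=N/2+\lfloor\ell/4\rfloor$ vertices, and on at least $\ell-3+\lceil\ell/3\rceil$ vertices when $G^{*}$ is triangle-free.

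For the first (general) estimate I would argue by induction on $\ell$, using the structure of maximal planar graphs. Since $\sum_{v}\deg v=6\ell-12$, the graph $G$ has a vertex of degree at most $5$; deleting it and retriangulating the resulting hole yields a smaller maximal planar graph, to which the inductive bound applies, and one follows how a maximum induced forest of the dual changes when the vertex is reinserted. The point of the bookkeeping is that the faces newly created at the vertex always add at least one face to the forest, and, by a periodicity argument, one more face roughly once in every four steps --- which is where the summand $\lfloor\ell/4\rfloor$ comes from. A cleaner route is to invoke the known sharp bounds on the decycling number (minimum size of a feedback vertex set) of a cubic planar graph, which produce a feedback vertex set of $G^{*}$ of size at most $\lceil 3\ell/4\rceil-2$; the few small graphs, in particular $\ell=4$ with $G^{*}=K_{4}$, then have to be checked directly.

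For the sharper bound when $G^{*}$ is triangle-free, the extra input is that (it is well known that) $G^{*}$ is triangle-free precisely when $G$ is $4$-connected, and a $4$-connected planar graph is Hamiltonian by Tutte's theorem. A Hamilton cycle $Z$, of length $\ell$, cuts $G$ into two triangulations of $\ell$-gons, each with $\ell-2$ triangles whose duals are trees $T_{1},T_{2}$ on $\ell-2$ vertices; $G^{*}$ is then $T_{1}\sqcup T_{2}$ together with the $\ell$ ``rung'' edges dual to the edges of $Z$. I would take all of $T_{1}$ and, from the second polygon, a set $S$ of faces that are pairwise non-adjacent in $T_{2}$ and each incident to at most one rung: the induced subgraph is then $T_{1}$ with some pendant vertices attached, hence a forest. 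Counting how the $\ell$ rungs distribute over the $\ell-2$ faces of each polygon (the numbers $n_{0},n_{1},n_{2}$ of triangles with $0,1,2$ boundary edges satisfy $n_{0}+n_{1}+n_{2}=\ell-2$ and $n_{1}+2n_{2}=\ell$), together with the elementary fact that any $k$ vertices of a tree span an independent set of size $\lceil k/2\rceil$, should make $|S|$ large enough to reach $\ell-3+\lceil\ell/3\rceil$; again one may instead quote the improved decycling bound for triangle-free cubic graphs.

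The step I expect to be the real obstacle is this induced-forest estimate, and in particular getting the constants $\lfloor\ell/4\rfloor$ and $\lceil\ell/3\rceil$ exactly right: a straightforward greedy or averaging argument only recovers Wilf's bound $\ell-2$, so extracting the additional linear term requires either a carefully organised induction through the retriangulations of $G$, or a genuine structural decomposition of $G^{*}$ (a $2$-factor in the general case, the Hamilton-cycle splitting in the triangle-free one), plus separate attention to the small exceptional graphs. By contrast the passage to the dual in the first step is routine once Remark \ref{rm43} is available.
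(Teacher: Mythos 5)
Your main route is essentially the paper's proof: the paper takes $\mathfrak{D}$ to be the set of face boundaries of $G$, notes that $\mathcal{G}(\mathfrak{D})$ is the dual graph, a cubic graph on $2\ell-4$ vertices, applies Remark \ref{rm43} to any induced forest in it, and then simply quotes Bondy--Hopkins--Staton's lower bounds for the largest induced forest in a connected cubic graph, namely $(5n-2)/8$ in general and $(2n-1)/3$ in the triangle-free case with $n=2\ell-4$ (checking $\ell=3,4$ by hand) --- exactly the ``cleaner route'' you propose, and your arithmetic ($\text{FVS}\le\lceil 3\ell/4\rceil-2$, forest $\ge \ell-3+\lceil\ell/3\rceil$) matches. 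One caution about your sketched self-contained alternative for the second bound: the claim that $G^{*}$ is triangle-free precisely when $G$ is $4$-connected is false; a triangle in the dual of a maximal planar graph corresponds to a vertex of degree $3$ in $G$ (for $\ell\ge5$), so triangle-freeness of $G^{*}$ only amounts to minimum degree $\ge4$, which does not exclude separating triangles (e.g.\ two octahedra glued along a triangle), and Tutte's theorem would then not apply. Since the quoted induced-forest bound for triangle-free cubic graphs already suffices, this flaw does not affect your main argument, but the Hamilton-cycle decomposition as stated would not cover all relevant graphs.
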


\begin{proof}
Denote by $\mathfrak{C}$ the set of circuits of $M(G)$. Let $\mathfrak{D}$ be the subset of $\mathfrak{C}$ consisting of circuits which are boundaries of faces of $G.$ Observe that the intersection graph $\mathcal{G}(\mathfrak{D})$ is isomorphic to the dual graph of $G$. So $\mathcal{G}(\mathfrak{D})$ is a cubic graph of $2\ell-4$ vertices. We will assume that $\ell>4$ as the cases $\ell=3$ and $\ell=4$ can be easily checked (of course, one may also apply \cite[Theorem 3]{W} to these cases). Then by \cite[Theorem 4 and Theorem 5]{BHS}, there is a forest $\mathfrak{D}'\subset\mathfrak{D}$ with the cardinality at least
\begin{enumerate}[(a)]
\item
$\big(5(2\ell-4)-2\big)/8=\ell-2+(\ell-3)/4$ in the general case, and
\item
$\big(2(2\ell-4)-1\big)/3=\ell-3+\ell/3$ in the case $\mathcal{G}(\mathfrak{D})$ has no triangles.
\end{enumerate}
Now by Remark \ref{rm43}, $\mathfrak{D}'$ is a simple subset of $\mathfrak{C}$ with respect to a suitable ordering  of the edges of $G$. Since the broken circuits of the circuits in $\mathfrak{D}'$ are then pairwise disjoint, the proposition follows.
\end{proof}

The above proposition yields the following improvement of \cite[Theorem 4]{W}.

\begin{theorem}\label{th411}
Let $\chi(G,t)=t^\ell+\sum_{p=1}^{\ell-1}(-1)^pa_{p}t^{\ell-p}$ be the chromatic polynomial of a maximal planar graph $G$. Then the coefficients of $\chi(G,t)$ are dominated by the corresponding coefficients of the function
$$Q(t)=t^{-\ell+4+\lfloor \ell/4\rfloor}(t+1)^{\ell-2-2\lfloor \ell/4\rfloor}(t+2)^{\ell-2+\lfloor \ell/4\rfloor},$$
or explicitly,
$$a_p\leq\sum_{k=0}^{\min\{p,\ell-2+\lfloor \ell/4\rfloor\}}\binom{\ell-2-2\lfloor \ell/4\rfloor}{p-k}\binom{\ell-2+\lfloor \ell/4\rfloor}{k}2^k,\quad p=1,\ldots,\ell-1.$$
In the case when the dual graph of $G$ has no triangles, the function $Q(t)$ can be replaced by
$$R(t)=t^{-\ell+3+\lceil \ell/3\rceil}(t+1)^{\ell-2\lceil \ell/3\rceil}(t+2)^{\ell-3+\lceil \ell/3\rceil},$$
and we have
$$a_p\leq\sum_{k=0}^{\min\{p,\ell-3+\lceil \ell/3\rceil\}}\binom{\ell-2\lceil \ell/3\rceil}{p-k}\binom{\ell-3+\lceil \ell/3\rceil}{k}2^k,\quad p=1,\ldots,\ell-1.$$
\end{theorem}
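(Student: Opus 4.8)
The plan is to read the bound off from Proposition \ref{pr49}, Whitney's description of the chromatic polynomial (the discussion preceding Corollary \ref{th27}), and an elementary $f$-vector comparison. Recall that a maximal planar graph $G$ with $\ell\ge 3$ vertices is connected, has $n:=3\ell-6$ edges, and every face is a triangle; so the cycle matroid $M(G)$ is simple of rank $r=\ell-1$, and for any ordering $\prec$ of the edges Whitney's theorem gives $\chi(G,t)=\sum_{p=0}^{\ell-1}(-1)^pf_{p-1}\,t^{\ell-p}$, where $f_{-1}=1$ and $(f_0,\dots,f_{r-1})$ is the $f$-vector of $BC_\prec(M(G))$. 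In particular the number $a_p$ in the statement equals $f_{p-1}$.

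First I would invoke Proposition \ref{pr49}, and more precisely its proof: choose an ordering $\prec$ of the edges of $G$ for which $M(G)$ has at least $h_0:=\ell-2+\lfloor\ell/4\rfloor$ pairwise disjoint broken circuits, and note that the broken circuits exhibited there are broken circuits of triangle-boundaries, hence each has exactly two elements. Fix $h_0$ of them, say $\{i_1,j_1\},\dots,\{i_{h_0},j_{h_0}\}$ (they use $2h_0\le n$ distinct vertices). Since circuits of a simple matroid have at least three elements, every two-element broken circuit is already a minimal broken circuit, so the pairwise coprime quadrics $x_{i_1}x_{j_1},\dots,x_{i_{h_0}}x_{j_{h_0}}$ belong to $\mathcal I_\prec(M(G))$. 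Hence $BC_\prec(M(G))$ is a subcomplex of the simplicial complex $\Delta'$ on $[n]$ whose minimal nonfaces are exactly these $h_0$ pairs. As $\Delta'$ is the join of $h_0$ two-point complexes with the full simplex on the remaining $n-2h_0=\ell-2-2\lfloor\ell/4\rfloor$ vertices, and the $f$-polynomial of a join is the product of the $f$-polynomials, $\sum_{p\ge 0}f_{p-1}(\Delta')\,t^p=(1+2t)^{h_0}(1+t)^{\,n-2h_0}$.

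From the inclusion $BC_\prec(M(G))\subseteq\Delta'$ we get $a_p=f_{p-1}(BC_\prec(M(G)))\le f_{p-1}(\Delta')=[t^p]\bigl((1+2t)^{\ell-2+\lfloor\ell/4\rfloor}(1+t)^{\ell-2-2\lfloor\ell/4\rfloor}\bigr)$ for every $p$, and expanding this product as a convolution of the two binomial expansions yields exactly the asserted closed formula for $a_p$. To restate it via $Q(t)$, set $P(t):=(1+t)^{\ell-2-2\lfloor\ell/4\rfloor}(1+2t)^{\ell-2+\lfloor\ell/4\rfloor}=\sum_j c_jt^j$, which has degree $n-h_0=2\ell-4-\lfloor\ell/4\rfloor$. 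Since $(t+1)^a(t+2)^b=t^{a+b}P(1/t)$ for the relevant exponents $a,b$, and the total degree shift $(-\ell+4+\lfloor\ell/4\rfloor)+(a+b)$ equals $\ell$, one gets $Q(t)=\sum_j c_j\,t^{\ell-j}$; hence $[t^{\ell-p}]Q(t)=c_p\ge a_p$, which is precisely the statement that the coefficients of $\chi(G,t)$ (whose coefficient of $t^{\ell-p}$ is $\pm a_p$) are dominated by those of $Q(t)$.

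For the case where the dual graph of $G$ contains no triangle, I would run the identical argument with $h_0$ replaced throughout by the sharper bound $h_0':=\ell-3+\lceil\ell/3\rceil$ supplied by Proposition \ref{pr49}; the auxiliary complex becomes a join of $h_0'$ two-point complexes with a simplex on $n-2h_0'=\ell-2\lceil\ell/3\rceil$ vertices, its $f$-polynomial is $(1+2t)^{h_0'}(1+t)^{\,n-2h_0'}$, and the same reversal gives the bound in terms of $R(t)$. I do not expect a serious obstacle: all the genuine combinatorial work is already packaged in Proposition \ref{pr49}. The two points needing care are (i) that one uses the proof of Proposition \ref{pr49}, not merely its statement, to guarantee that the disjoint broken circuits all have size two, and (ii) the routine but easy-to-mis-index reversal identity relating $(1+t)^a(1+2t)^b$ to $t^c(t+1)^a(t+2)^b$.
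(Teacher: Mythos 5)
Your proposal is correct and follows essentially the same route as the paper: the paper's proof simply says to substitute Proposition \ref{pr49} for Wilf's Theorem 3 in the proof of \cite[Theorem 4]{W}, and your argument is exactly that substitution, with Wilf's bounding step unpacked explicitly (the disjoint size-two broken circuits from face triangles give quadric nonfaces, so $BC_\prec(M(G))$ sits inside a join of two-point complexes with a simplex, whose $f$-polynomial $(1+2t)^{h_0}(1+t)^{n-2h_0}$ yields the stated coefficient bounds via Whitney's theorem). The details you supply, including the degree-reversal identification with $Q(t)$ and $R(t)$ and the caveat that the size-two property comes from the proof rather than the statement of Proposition \ref{pr49}, all check out.
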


\begin{proof}
One only needs to replace \cite[Theorem 3]{W} by Proposition \ref{pr49} in the proof of \cite[Theorem 4]{W}.
\end{proof}

We end this subsection with an examination of 3-codimensional Stanley-Reisner ideals of broken circuit complexes. We show that for those ideals, Gorensteiness is equivalent to be a complete intersection. Gorenstein ideals of codimension 3 were classified in Buchsbaum-Eisenbud's structure theorem \cite[Theorem 2.1]{BE}. Bruns-Herzog \cite[Theorem 6.1]{BH} and Kamoi \cite[Theorem 0.1]{K} then independently refined this classification for monomial ideals. They showed that if $I\subset S={K}[x_1,\ldots,x_n]$ is a monomial Gorenstein ideal of codimension 3 with $m$ minimal generators ($m$ is odd by \cite[Theorem 2.1]{BE}), then there are $m$ pairwise coprime monomials $u_1,\ldots,u_m$ of $S$ such that $I$ is generated by the monomials
$$v_i=u_iu_{i+1}\cdots u_{i+s-1},\quad i=1,\ldots,m,$$
where $s=(m-1)/2$ and $u_j=u_{j-m}$ for $j>m.$

\begin{proposition}\label{pr412}
Let $(M,\prec)$ be an ordered simple matroid on $[n]$ and let $\mathcal{I}_\prec(M)\subset S$ be the Stanley-Reisner ideal of the broken circuit complex of $M$. Assume that $\codim \mathcal{I}_\prec(M)\leq 3$. Then $\mathcal{I}_\prec(M)$ is Gorenstein if and only if it is a complete intersection.
\end{proposition}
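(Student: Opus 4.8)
The proof splits into the two implications; the forward one is formal, and the content lies in the converse, where only the case of codimension exactly $3$ needs work.

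\emph{Complete intersection implies Gorenstein.} This is standard: a regular sequence cuts out a Gorenstein quotient of the Gorenstein ring $S$.

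\emph{Gorenstein implies complete intersection.} Suppose $\mathcal{I}_\prec(M)$ is Gorenstein. If $\codim\mathcal{I}_\prec(M)\le 2$ there is nothing to prove, since a Gorenstein ideal of grade at most $2$ in a polynomial ring is automatically a complete intersection (grade $1$: principal; grade $2$: classical, see e.g. \cite{BH2}). So assume $\codim\mathcal{I}_\prec(M)=3$ and invoke the monomial structure theorem of Bruns--Herzog and Kamoi recalled above: $\mathcal{I}_\prec(M)$ is minimally generated by $v_i=u_iu_{i+1}\cdots u_{i+s-1}$, $i=1,\dots,m$ (indices read modulo $m$), with $m$ odd, $s=(m-1)/2$, and $u_1,\dots,u_m$ pairwise coprime. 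As $\mathcal{I}_\prec(M)$ is squarefree, the $v_i$, hence the $u_j$, are squarefree; set $A_j=\operatorname{supp}(u_j)$, so the $A_j$ are pairwise disjoint and non-empty. Since $\{v_1,\dots,v_m\}$ is a minimal system of generators, the minimal broken circuits of $(M,\prec)$ are exactly the sets $B_i=\operatorname{supp}(v_i)=A_i\cup A_{i+1}\cup\cdots\cup A_{i+s-1}$. Codimension $3$ forces $m\ge 3$; if $m=3$ then $s=1$ and the $v_i=u_i$ are pairwise coprime, so $\mathcal{I}_\prec(M)$ is a complete intersection, as desired. It remains to rule out $m\ge 5$, i.e. $s\ge 2$.

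\emph{Ruling out $m\ge 5$.} Pick circuits $C_i\in\mathfrak{C}(M)$ with $bc_\prec(C_i)=B_i$ and write $C_i=B_i\cup\{e_i\}$ with $e_i=\min_\prec(C_i)\notin B_i$. The strategy is to produce, via the circuit elimination theorem (Theorem \ref{lm7}), a circuit $C$ of $M$ whose broken circuit $bc_\prec(C)$ contains no $B_j$: this is impossible, because $bc_\prec(C)$ is a non-face of $BC_\prec(M)$ and therefore must contain one of the minimal broken circuits $B_1,\dots,B_m$. Concretely, one chooses a sub-collection $C_{i_1},\dots,C_{i_k}$ with $k\ge 4$ that can be ordered so that $C_{i_t}\nsubseteq\bigcup_{u<t}C_{i_u}$ and whose union contains all blocks $A_1,\dots,A_m$. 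Since $s\ge 2$, the three circuits $C_1,C_2,C_3$ only exhaust the blocks $A_1,\dots,A_{s+2}$, a proper subfamily as $s+2<m$, so adjoining one further circuit — for instance $C_{s+3}$, whose block range $\{s+3,\dots,2s+1,1\}$ recovers precisely the missing blocks — yields such a collection with $k=4$ and $\bigcup_t C_{i_t}\supseteq A_1\cup\cdots\cup A_m$. Next one selects three blocks $A_{l_1},A_{l_2},A_{l_3}$ such that the $s$-element ``arcs'' $\{l_t-s+1,\dots,l_t\}\pmod m$ jointly cover $\mathbb{Z}/m\mathbb{Z}$ (possible because $3s\ge m=2s+1$), picks $a_{l_t}\in A_{l_t}\subseteq\bigcup_u C_{i_u}$, and sets $B=\{a_{l_1},a_{l_2},a_{l_3}\}$, so that $|B|=k-1$. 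Theorem \ref{lm7} then furnishes a circuit $C\subseteq\bigl(\bigcup_t C_{i_t}\bigr)\setminus B$. For every $j$ some $l_t$ lies in $\{j,\dots,j+s-1\}$, so $a_{l_t}\in A_{l_t}\subseteq B_j$ while $a_{l_t}\notin C$; hence $B_j\nsubseteq bc_\prec(C)$ for all $j$, the desired contradiction.

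The step I expect to be the main obstacle is the verification of the hypothesis $C_{i_t}\nsubseteq\bigcup_{u<t}C_{i_u}$ of Theorem \ref{lm7}, together with the related task of securing a chain of length $k\ge 4$: both can only fail through the auxiliary elements $e_i=\min_\prec(C_i)$, which might lie inside some block $A_l$ and thereby shorten the chain or collapse $\bigcup_i C_i$ onto a union of just three of the $C_i$. Handling this needs a short separate analysis — for example, exploiting the cyclic symmetry to rotate the choice of the $C_i$, or observing that an element $e_l$ lying in a block $A_p$ cannot simultaneously be the $\prec$-minimum of every circuit $C_i$ with $p\in\{i,\dots,i+s-1\}$, which constrains such coincidences enough to route the chain around them. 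The remaining ingredients — the inequalities $3s\ge m$ and the fact that two arcs of length $s$ cannot cover $\mathbb{Z}/m\mathbb{Z}$ — are elementary.
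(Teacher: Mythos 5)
Your overall plan coincides with the paper's: invoke the Bruns--Herzog/Kamoi structure theorem, reduce to ruling out $m\ge 5$, and use the circuit elimination theorem (Theorem \ref{lm7}) to manufacture a circuit whose broken circuit contains none of the minimal broken circuits $B_1,\dots,B_m$, contradicting the fact that every broken circuit contains a minimal one. The forward implication and the codimension $\le 2$ cases are fine. But the step you flag at the end is a genuine gap, not a routine verification. With your four circuits $C_1,C_2,C_3,C_{s+3}$, the block set of each chosen circuit is already covered by the blocks of the other three, so whether an enumeration with $C_{i_t}\nsubseteq\bigcup_{u<t}C_{i_u}$ exists hinges entirely on where the cone points $e_i=\min_\prec(C_i)$ sit; nothing in your setup prevents an $e_i$ from lying inside a block of another chosen circuit (blocks may be singletons), in which case the required chain can fail. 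Neither of your proposed remedies is carried out, and the ``rotate by cyclic symmetry'' idea cannot in general work: the only obvious way to force cone points outside all blocks is to make some block element $\prec$-minimal among $\bigcup_j\mathrm{supp}(u_j)$, and that tames only those circuits whose window contains that particular block --- for $s=2$ (i.e.\ $m=5$) just two windows contain any given block, so at most two of your four circuits can be controlled this way. So as written the argument is incomplete precisely at its load-bearing step.

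The paper closes exactly this hole by using only two circuits and eliminating a single, carefully chosen element, which makes the chain hypothesis and the final divisibility check trivial. Let $t$ be the $\prec$-minimum of $\bigcup_j\mathrm{supp}(u_j)$; after relabeling, $t\in\mathrm{supp}(u_1)$. Since $s\ge 2$, $u_1$ divides both $v_1$ and $v_m$, so $t\in B_1\cap B_m$, and the cone points $p=\min_\prec(C_1)$, $q=\min_\prec(C_m)$ satisfy $p,q\prec t$, hence lie outside every block. Applying Theorem \ref{lm7} to the pair $C_1,C_m$ with $B=\{t\}$ yields a circuit inside $C_1\cup C_m-\{t\}$, whence $x_{C_1\cup C_m-\{t\}}=\frac{u_1}{x_t}\,u_2\cdots u_s u_m\,x_px_q\in\mathcal{I}_\prec(M)$; but because $m>s+1$ and $x_px_q$ is coprime to every $v_i$, no $v_i$ divides this monomial, a contradiction. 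If you want to keep the four-circuit scheme you must supply the missing analysis of the cone points and the chain condition; the two-circuit argument avoids the issue altogether.
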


\begin{proof}
The proposition is true for all ideals of codimension 1 and codimension 2; see \cite[Corollary 21.10]{E}. Therefore, it suffices to prove that $\mathcal{I}_\prec(M)$ is a complete intersection when it is a Gorenstein ideal of codimension 3. Let $m$ be the number of minimal generators of $\mathcal{I}_\prec(M)$. Set $s=(m-1)/2$ and let $u_1,\ldots,u_m$ be pairwise coprime monomials such that $I=(v_1,\ldots,v_m)$, where
$v_i=u_iu_{i+1}\cdots u_{i+s-1}$ for $i=1,\ldots,m$ ($u_j=u_{j-m}$ if $j>m$).
We need to show that $m=3$. Suppose on the contrary that $m>3$. Put $\min_\prec(u_i)=\min_\prec\{j:x_j\mid u_i\}$ for $i=1,\ldots,m.$ We may assume
$$t:=\mathrm{min}_\prec(u_1)=\mathrm{min}_\prec\{\mathrm{min}_\prec(u_i): i=1,\ldots,m\}.$$
Let $C_i$ be the circuits of $M$ such that $v_i=x_{bc_\prec(C_i)}$ for $i=1,\ldots,m.$ Note that $u_1\mid v_m$ since $s>1$. So the above assumption yields
$$t=\mathrm{min}_\prec\big(bc_\prec(C_1)\big)=\mathrm{min}_\prec\big(bc_\prec(C_m)\big)=\mathrm{min}_\prec\{\mathrm{min}_\prec\big(bc_\prec(C_i)\big): i=1,\ldots,m\}.$$
Hence we can find $p,q\prec t$ such that $C_1=bc_\prec(C_1)\cup \{p\}$ and $C_m=bc_\prec(C_m)\cup \{q\}$. By Theorem \ref{lm7}, there exists a circuit $C$ of $M$ with $C\subseteq C_1\cup C_m-\{t\}$. Since $x_{bc_\prec(C)}\in \mathcal{I}_\prec(M)$, it follows that $x_{C_1\cup C_m-\{t\}}\in \mathcal{I}_\prec(M)$. We have
$$x_{C_1\cup C_m-\{t\}}=\frac{u_1}{x_t}u_2\cdots u_{s-1}u_su_mx_px_q.$$
As $m>s+1$ and $(v_i,x_px_q)=1$ for $i=1,\ldots,m$, it is easy to check that $v_i\nmid x_{C_1\cup C_m-\{t\}}$  for all $i=1,\ldots,m$. This implies $x_{C_1\cup C_m-\{t\}}\not\in \mathcal{I}_\prec(M)$, a contradiction.
\end{proof}

\subsection{Orlik-Terao ideals}

In the following we will characterize arrangements whose Orlik-Terao ideal is a complete intersection and show that Conjecture \ref{cj} holds for those arrangements. We begin with a simple lemma. It is known, but due to the lack of reference we present a proof here.

\begin{lemma}\label{lm413}
Let $I\subset S={K}[x_1,\ldots,x_n]$ be a graded prime ideal which is minimally generated by homogeneous polynomials $u_1,\ldots,u_h$. Then the following conditions are equivalent:
\begin{enumerate}
\item $I$ is a complete intersection;
\item Every subset of $u_1,\ldots,u_h$ generates a prime ideal.
\end{enumerate}
\end{lemma}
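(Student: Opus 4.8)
The plan is to prove the two implications separately; (ii)$\Rightarrow$(i) is short, and (i)$\Rightarrow$(ii) is the substantial half. For (ii)$\Rightarrow$(i), put $I_k=(u_1,\ldots,u_k)$ for $k=0,\ldots,h$, so that $I_0=(0)$ and $I_h=I$. Each $I_k$ is prime by hypothesis (and $I_0=(0)$ is prime since $S$ is a domain), and because $u_1,\ldots,u_h$ is a \emph{minimal} generating set we have $u_k\notin I_{k-1}$ for every $k$ (otherwise $I$ would be generated by fewer than $h$ of the $u_i$). Thus $I_0\subsetneq I_1\subsetneq\cdots\subsetneq I_h$ is a strictly increasing chain of primes, so $\codim I\ge h$, while Krull's generalized principal ideal theorem gives $\codim I\le h$. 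Hence $\codim I=h$ equals the minimal number of generators of $I$, and since $S$ is Cohen--Macaulay this forces $u_1,\ldots,u_h$ to be a regular sequence, i.e.\ $I$ is a complete intersection.

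For (i)$\Rightarrow$(ii), since $I$ is a complete intersection the $u_i$ form a regular sequence, and as they are homogeneous this sequence may be permuted arbitrarily; so it suffices to show that $(u_1,\ldots,u_k)$ is prime for every $k=0,\ldots,h$. I would argue by downward induction on $k$, the case $k=h$ being the hypothesis. Assume $P:=(u_1,\ldots,u_{k+1})$ is prime, set $J:=(u_1,\ldots,u_k)$, and let $u$ be the image of $u_{k+1}$ in $T:=S/J$. Then $J$ is again a complete intersection, so $T$ is Cohen--Macaulay; by Macaulay's unmixedness theorem all minimal primes $\mathfrak{q}_1,\ldots,\mathfrak{q}_s$ of $J$ have codimension $k$; $u$ is a nonzerodivisor on $T$, and hence on each domain $S/\mathfrak{q}_i$; and $T/uT=S/P$ is a domain. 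The goal is to show $s=1$ and that $T$ is reduced, for then $T$ is a domain and $J$ is prime.

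The heart of the matter is a multiplicity count. Write $\delta=\deg u$ and $e(-)$ for the degree (Hilbert--Samuel multiplicity). The short exact sequences given by multiplication by $u$ yield $e(S/P)=\delta\,e(T)$ and $e(S/(\mathfrak{q}_i+(u_{k+1})))=\delta\,e(S/\mathfrak{q}_i)$ for each $i$. Moreover $\mathfrak{q}_i+(u_{k+1})$ has codimension $k+1$ and its unique minimal prime is $P$ (it contains the codimension-$(k+1)$ prime $P=J+(u_{k+1})$, and any minimal prime over it strictly contains $\mathfrak{q}_i$ since $u_{k+1}\notin\mathfrak{q}_i$, hence has codimension $\ge k+1$), so $e(S/(\mathfrak{q}_i+(u_{k+1})))=\ell_i\,e(S/P)$ with $\ell_i:=\mathrm{length}\,(S/(\mathfrak{q}_i+(u_{k+1})))_P\ge 1$. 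Finally, additivity of multiplicity for the unmixed module $T$ gives $e(T)=\sum_{i=1}^s\mu_i\,e(S/\mathfrak{q}_i)$ with $\mu_i:=\mathrm{length}\,T_{\mathfrak{q}_i}\ge 1$. Combining these identities (and using $e(S/P)=\delta\,e(T)$ to cancel $\delta$) collapses everything to $1=\sum_{i=1}^s\mu_i\ell_i$, which forces $s=1$ and $\mu_1=1$. Hence $J$ has a unique minimal prime and $T$ is generically reduced ($R_0$); being also $S_1$ (indeed $S_2$, as $T$ is Cohen--Macaulay), $T$ is reduced by Serre's criterion, so $T$ is a domain. This completes the induction, hence the lemma.

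I expect the only delicate point to be making the multiplicity bookkeeping of the third paragraph fully rigorous --- in particular verifying that $P$ is genuinely the unique top-dimensional associated prime entering the additivity formula for $\mathfrak{q}_i+(u_{k+1})$, and invoking additivity of multiplicity correctly for the unmixed module $T$. Should that become cumbersome, the same computation can be phrased geometrically: $\mathrm{Proj}(S/J)$ is a complete intersection, hence equidimensional, and comparing the degrees of its irreducible components against the degree of the irreducible hypersurface section cut out by $u_{k+1}$ produces the identity $1=\sum_i\mu_i\ell_i$ in the same way, with the same conclusion.
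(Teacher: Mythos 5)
Your proof is correct, and the two implications fare differently against the paper. For (ii)$\Rightarrow$(i) you argue exactly as the paper does: the strict chain of primes $0\subsetneq(u_1)\subsetneq\cdots\subsetneq(u_1,\ldots,u_h)=I$ (strict by minimality of the generating set) gives $\codim I\ge h$, Krull's principal ideal theorem gives $\codim I\le h$, and codimension equal to the number of generators forces a complete intersection. For (i)$\Rightarrow$(ii) you take a genuinely different and much heavier route. The paper reduces, by the same downward induction along the (permutable) homogeneous regular sequence, to the claim that if $u$ is a homogeneous nonzerodivisor on a graded quotient $R$ of $S$ with $(u)$ prime, then $R$ is a domain, and proves it in three lines: a minimal prime $P$ of $R$ contained in $(u)$ satisfies $P=uP$ (each $v\in P$ equals $uw$, and $w\in P$ since $u\notin P$), hence $P=0$ by graded Nakayama. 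You instead establish the inductive step by a multiplicity count: the multiplication-by-$u$ sequences, the associativity formula for degrees, unmixedness of the Cohen--Macaulay ring $T=S/(u_1,\ldots,u_k)$, and the identification of $P$ as the unique minimal prime of each $\mathfrak{q}_i+(u_{k+1})$ yield $1=\sum_i\mu_i\ell_i$, so $T$ has a single minimal prime and is generically reduced, and Serre's criterion ($R_0$ plus $S_1$) then makes $T$ reduced, hence a domain. The point you flag as delicate does check out: $P\subseteq\mathfrak{q}_i+(u_{k+1})$ because $\mathfrak{q}_i\supseteq(u_1,\ldots,u_k)$, every minimal prime over $\mathfrak{q}_i+(u_{k+1})$ has codimension exactly $k+1$ and contains the codimension-$(k+1)$ prime $P$, hence equals it, and additivity for $T$ is legitimate since Cohen--Macaulayness rules out embedded primes. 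What your route buys is a geometric, B\'ezout-style picture (degrees of the components of the complete intersection versus the degree of the prime hypersurface section); what it costs is the machinery of multiplicities and Serre's criterion, where the paper's Nakayama trick is elementary, shorter, and needs none of it.
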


\begin{proof}
(i)$\Rightarrow$(ii): Let $R$ be a quotient ring of $S$ by a graded ideal. By descending induction it is enough to show that if there is a homogeneous regular element $u\in R$ such that $(u)$ is a prime ideal, then $R$ is a domain. Indeed, let $P\subset (u)$ be a minimal prime ideal of $R$. Then for each $v\in P$ we have $v=uw$ with $w\in R$. Since $u\not\in P$, $w$ is an element of $P$. It follows that $P=uP$, and hence $P=0$ by Nakayama's lemma. Therefore, $R$ is a domain.

(ii)$\Rightarrow$(i): Let $I_j=(u_1,\ldots, u_j)$ for $j=1,\ldots,h$. Then we have a chain of prime ideals:
$$0\subset I_1\subset\cdots\subset I_h=I.$$
This chain is strict because of the minimality of the set of generators $\{u_1,\ldots,u_h\}$. Hence $\codim I=h$, from which follows that $I$ is a complete intersection.
\end{proof}

As before, let $\mathcal{A}$ be an essential central arrangement of $n$ hyperplanes in a vector space $V$ over a field ${K}$. Let $M(\mathcal{A})$ be the underlying matroid and $I(\mathcal{A})\subset S={K}[x_1,\ldots,x_n]$ the Orlik-Terao ideal of $\mathcal{A}$. Denote by $\mathfrak{C}=\mathfrak{C}\big(M(\mathcal{A})\big)$ the set of circuits of $M(\mathcal{A})$. Recall that each circuit $C$ of $M(\mathcal{A})$ corresponds to a unique (up to a scalar multiple) relation $r_C$ in the relation space $F(\mathcal{A})$. The ideal $I(\mathcal{A})$ is then generated by the polynomials $\{\partial r_C:C\in\mathfrak{C}\}$; see Theorem \ref{th24}. In the following we will sometimes make use of the fact that $I(\mathcal{A})$ is a prime ideal (see \cite[Proposition 2.1]{ST}) without mentioning it explicitly.

\begin{lemma}\label{lm414}
Assume that $I(\mathcal{A})$ is a complete intersection. Let $\mathfrak{D}$ be a subset of $\mathfrak{C}$ such that $\{\partial r_{C},C\in\mathfrak{D}\}$ is a minimal system of generators of $I(\mathcal{A})$. Then for every subset $\mathfrak{D}'$ of $\mathfrak{D}$, if a relation $r$ belongs to the subspace of $F(\mathcal{A})$ generated by $\{r_C, C\in\mathfrak{D}'\}$, then $\partial r$ is an element of the ideal of $R$ generated by $\{\partial r_C, C\in\mathfrak{D}'\}.$
\end{lemma}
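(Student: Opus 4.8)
The plan is to argue by induction on $|\mathfrak{D}'|$, combining a computational identity that substitutes for the (missing) additivity of the operator $\partial$ with the fact that any subset of the minimal generating set $\{\partial r_C:C\in\mathfrak{D}\}$ of $I(\mathcal{A})$ generates a prime ideal.

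The first step is to establish a near-additivity identity for $\partial$. Given two relations $\rho_1=\sum_{k\in A_1}a_kx_k$ and $\rho_2=\sum_{k\in A_2}b_kx_k$ with all displayed coefficients nonzero, set $A_3=\operatorname{supp}(\rho_1+\rho_2)\subseteq A_1\cup A_2$, let $Z=(A_1\cap A_2)\setminus A_3$ be the set of indices at which the coefficients cancel, and write $m_T=\prod_{j\in T}x_j$ for $T\subseteq[n]$. Expanding, multiplying $\partial\rho_i$ by $m_{A_{3-i}\setminus A_i}$ turns it into $\sum_{k\in A_i}c_k\,m_{A_1\cup A_2}/x_k$ for appropriate coefficients $c_k$; adding the two resulting expressions, the terms indexed by $Z$ cancel and each surviving term contains the factor $m_Z$, so that
$$m_Z\cdot\partial(\rho_1+\rho_2)=m_{A_2\setminus A_1}\,\partial\rho_1+m_{A_1\setminus A_2}\,\partial\rho_2,$$
and in particular $m_Z\cdot\partial(\rho_1+\rho_2)\in(\partial\rho_1,\partial\rho_2)$. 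This identity is the only place where a genuine computation enters, and verifying it carefully is the main (though elementary) obstacle.

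Next I would record two consequences of the hypotheses. Since $I(\mathcal{A})$ is a complete intersection and is prime by \cite[Proposition 2.1]{ST}, and $\{\partial r_C:C\in\mathfrak{D}\}$ is a minimal system of generators, Lemma \ref{lm413} implies that for every $\mathfrak{D}''\subseteq\mathfrak{D}$ the ideal $\mathfrak{a}_{\mathfrak{D}''}:=(\partial r_C:C\in\mathfrak{D}'')$ is prime. Furthermore, no product of variables lies in $\mathfrak{a}_{\mathfrak{D}''}$: because $M(\mathcal{A})$ is simple, every circuit has cardinality at least $3$, so $I(\mathcal{A})$ is generated in degrees $\ge 2$ and contains no variable; hence the prime ideal $\mathfrak{a}_{\mathfrak{D}''}\subseteq I(\mathcal{A})$ contains no variable, and therefore no product of variables.

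Finally I would run the induction on $k=|\mathfrak{D}'|$. For $k\le 1$ the assertion is immediate, since $\partial(\lambda r_C)=\lambda\,\partial r_C$ for every scalar $\lambda$ and $\partial 0=0$. For $k\ge 2$, write $r=\lambda_{C_0}r_{C_0}+r''$ with $C_0\in\mathfrak{D}'$ and $r''=\sum_{C\in\mathfrak{D}'\setminus\{C_0\}}\lambda_C r_C$; if $\lambda_{C_0}=0$ apply the inductive hypothesis to $r=r''$ and $\mathfrak{D}'\setminus\{C_0\}$, so assume $\lambda_{C_0}\ne0$. Apply the identity with $\rho_1=\lambda_{C_0}r_{C_0}$, $\rho_2=r''$: with $A_2=\operatorname{supp}(r'')$ and $Z=(C_0\cap A_2)\setminus\operatorname{supp}(r)$ one obtains
$$m_Z\cdot\partial r=\lambda_{C_0}\,m_{A_2\setminus C_0}\,\partial r_{C_0}+m_{C_0\setminus A_2}\,\partial r''.$$
By the inductive hypothesis $\partial r''\in\mathfrak{a}_{\mathfrak{D}'\setminus\{C_0\}}$, so the right-hand side lies in $\mathfrak{a}_{\mathfrak{D}'}$. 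Since $\mathfrak{a}_{\mathfrak{D}'}$ is prime and $m_Z$ is a product of variables not contained in it, we conclude $\partial r\in\mathfrak{a}_{\mathfrak{D}'}$, as required.
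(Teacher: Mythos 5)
Your proof is correct and is in essence the paper's argument: the paper obtains the same kind of monomial-multiple membership in one stroke, by substituting $(1/x_1,\ldots,1/x_n)$ into $r=\sum_{C\in\mathfrak{D}'}a_Cr_C$ (using $\partial\rho=x_{\operatorname{supp}\rho}\,\rho(1/x)$) and clearing denominators to get $x_{[n]-\Lambda_r}\partial r=\sum_{C\in\mathfrak{D}'}a_Cx_{[n]-C}\partial r_C$, and then finishes exactly as you do, via the primality of $(\partial r_C:C\in\mathfrak{D}')$ from Lemma \ref{lm413} together with the fact that this ideal, being generated in degrees $\geq 2$, contains no monomial. Your pairwise ``near-additivity'' identity is just this substitution identity with denominators cleared, so the induction is a harmless repackaging rather than a different route.
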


\begin{proof}
For each relation $r=\sum_{i=1}^na_ix_i\in F(\mathcal{A})$, set $\Lambda_r=\{i\in [n]:a_i\ne 0\}$. Assume now that $r=\sum_{C\in\mathfrak{D}'}a_Cr_C,\ a_C\in{K}$. Substituting $(1/x_1,\ldots,1/x_n)$ in this equation we get
$$\frac{\partial r}{x_{\Lambda_r}}=\sum_{C\in\mathfrak{D}'}\frac{a_C\partial r_C}{x_{C}}.$$
It follows that
$$x_{[n]-\Lambda_r}\partial r=\sum_{C\in\mathfrak{D}'}a_Cx_{[n]-C}\partial r_C\in(\partial r_C: C\in\mathfrak{D}').$$
By Lemma \ref{lm413}, $I_{\mathfrak{D}'}=(\partial r_C: C\in\mathfrak{D}')$ is a prime ideal. Since $I_{\mathfrak{D}'}$ is generated in degree $\geq2$, $x_{[n]-\Lambda_r}\not \in I_{\mathfrak{D}'}$. Hence, $\partial r\in  I_{\mathfrak{D}'}.$
\end{proof}

We are now in a position to prove the following characterizations of the complete intersection property of the Orlik-Terao ideal.

\begin{theorem}\label{th415}
Let $\mathcal{A}$ be an essential central arrangement of $n$ hyperplanes. The following conditions are equivalent:
\begin{enumerate}
\item
$I(\mathcal{A})$ is a complete intersection;
\item
There is an ordering $\prec$ of $[n]$, with an arbitrary induced monomial order on $R$, such that $\mathrm{in}_\prec(I(\mathcal{A}))$ is a complete intersection;
\item
There is an ordering $\prec$ of $[n]$ such that the minimal broken circuits of $(M(\mathcal{A}),\prec)$ are pairwise disjoint;
\item
There is an ordering $\prec$ of $[n]$ and a subset $\mathfrak{D}$ of $\mathfrak{C}$ which is simple with respect to $\prec$ such that $\mathfrak{C}=\{\mathcal{C}(\mathfrak{D}'): \mathfrak{D}'\subseteq \mathfrak{D}\text{ is a tree}\}.$
\end{enumerate}
\end{theorem}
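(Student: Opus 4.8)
The plan is to reduce everything to material already in place, the one genuinely new ingredient being the passage from the complete intersection property of $I(\mathcal{A})$ to a good ordering of $[n]$. The equivalences (ii)$\Leftrightarrow$(iii)$\Leftrightarrow$(iv) are immediate: by Theorem \ref{th24} we have $\mathrm{in}_\prec(I(\mathcal{A}))=\mathcal{I}_\prec(M(\mathcal{A}))$ for every ordering $\prec$, so these are precisely the three conditions of Theorem \ref{th41} applied to $M(\mathcal{A})$, quantified over $\prec$. For (ii)$\Rightarrow$(i) I would argue numerically: $I(\mathcal{A})$ and $\mathrm{in}_\prec(I(\mathcal{A}))$ have the same Hilbert function, hence the same codimension $c=n-r$, and the minimal number of generators cannot decrease when one passes to an initial ideal; so if $\mathrm{in}_\prec(I(\mathcal{A}))$ is a complete intersection (minimally generated by $c$ elements), then $I(\mathcal{A})$ is minimally generated by at most $c$ elements, hence by exactly $c$ (Krull's height theorem), and since $S$ is Cohen--Macaulay an ideal of codimension $c$ generated by $c$ elements is a complete intersection.

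The substance is (i)$\Rightarrow$(iv). Assume $I(\mathcal{A})$ is a complete intersection and fix $\mathfrak{D}\subseteq\mathfrak{C}$ with $\{\partial r_C: C\in\mathfrak{D}\}$ a minimal generating set; then $|\mathfrak{D}|=\codim I(\mathcal{A})=n-r=\dim_{K}F(\mathcal{A})$. First I would show $\{r_C:C\in\mathfrak{D}\}$ is a basis of $F(\mathcal{A})$: if it were linearly dependent, some $r_{C_0}$ would lie in the span of the remaining $r_C$'s, whence $\partial r_{C_0}\in(\partial r_C: C\in\mathfrak{D}\setminus\{C_0\})$ by Lemma \ref{lm414}, contradicting the minimality of $\{\partial r_C:C\in\mathfrak{D}\}$; a dimension count then makes it a basis. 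Next I would extract the combinatorial shape of $\mathfrak{D}$: combining strong circuit elimination (Theorem \ref{lm7}) with Lemma \ref{lm414} and with the primality of the subideals $I_{\mathfrak{D}'}=(\partial r_C:C\in\mathfrak{D}')$ for $\mathfrak{D}'\subseteq\mathfrak{D}$ (Lemma \ref{lm413}), I would prove that any two distinct members of $\mathfrak{D}$ meet in at most one element and that the intersection graph $\mathcal{G}(\mathfrak{D})$ contains no cycle whose edges carry pairwise distinct labels. By Remark \ref{rm43} this produces an ordering $\prec$ of $[n]$ for which $\mathfrak{D}$ is a simple subset of $\mathfrak{C}$. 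Finally, for an arbitrary $C'\in\mathfrak{C}$ I would expand $r_{C'}$ in the basis $\{r_C:C\in\mathfrak{D}\}$, call $\mathfrak{D}(C')$ the set of $C\in\mathfrak{D}$ with nonzero coefficient, and show — along the lines of Lemmas \ref{lm9} and \ref{lm10} — that $\mathfrak{D}(C')$ is a tree with $C'=\mathcal{C}(\mathfrak{D}(C'))$, so that $\mathfrak{C}=\{\mathcal{C}(\mathfrak{D}'):\mathfrak{D}'\subseteq\mathfrak{D}\text{ a tree}\}$, which is (iv). As a byproduct this shows $\mathfrak{D}$ is a generating set of $\mathfrak{C}$ with respect to $\prec$, so the pairwise coprime monomials $x_{bc_\prec(C)}$, $C\in\mathfrak{D}$, generate $\mathcal{I}_\prec(M(\mathcal{A}))$, giving (ii) as well and closing the cycle of implications.

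I expect the main obstacle to be the middle step of (i)$\Rightarrow$(iv): squeezing the precise combinatorics of $\mathfrak{D}$ (pairwise intersections of size at most one, and absence of rainbow cycles in $\mathcal{G}(\mathfrak{D})$) out of the algebra. A naive degree count on $\partial r$ for a relation $r$ in $\mathrm{span}\{r_C:C\in\mathfrak{D}'\}$ is by itself too weak, since circuits of a simple matroid only have size $\ge 3$; one must combine the membership $\partial r\in I_{\mathfrak{D}'}$ of Lemma \ref{lm414} with the primality and degree-$\ge 2$ generation of $I_{\mathfrak{D}'}$, and feed in new circuits manufactured by Theorem \ref{lm7} to contradict the minimality of the chosen generators. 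Once this structural statement is secured, the remaining steps run parallel to the corresponding parts of the proof of Theorem \ref{th41} (through Lemmas \ref{lm8}--\ref{lm10}) and to Remark \ref{rm43}.
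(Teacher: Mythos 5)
Your overall architecture matches the paper's: (ii)$\Leftrightarrow$(iii)$\Leftrightarrow$(iv) via Theorem \ref{th24} and Theorem \ref{th41}, (ii)$\Rightarrow$(i) by the standard comparison of $I(\mathcal{A})$ with its initial ideal, and for the hard direction you reduce, exactly as the paper does, to showing that a set $\mathfrak{D}$ of circuits indexing a minimal generating system $\{\partial r_C\}$ of a complete intersection $I(\mathcal{A})$ satisfies $|C\cap C'|\leq 1$ for distinct $C,C'\in\mathfrak{D}$ and that $\mathcal{G}(\mathfrak{D})$ has no cycle with pairwise distinct edge labels, so that Remark \ref{rm43} supplies an ordering making $\mathfrak{D}$ simple. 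The genuine gap is that these two structural claims, which are the entire content of (i)$\Rightarrow$(ii), are only asserted: you say you ``would prove'' them by combining Lemma \ref{lm413}, Lemma \ref{lm414} and new circuits manufactured by Theorem \ref{lm7} ``to contradict the minimality of the chosen generators,'' but no such argument is given, and it is doubtful that one along those lines exists. In the paper's proof, minimality enters only as the hypothesis of Lemma \ref{lm414}, Theorem \ref{lm7} is not used in this step at all, and the contradiction is not with minimality but with the shape of $\partial r$: for the first claim one normalizes so that $r=r_{C_1}-r_{C_2}$ does not involve $x_p$ (where $p,q\in C_1\cap C_2$ are distinct), writes $\partial r=f_1\partial r_{C_1}+f_2\partial r_{C_2}$ by Lemma \ref{lm414}, splits each $f_i$ into the part free of $x_p$ and the part divisible by $x_p$, and deduces $\partial r=g_1x_{C_1-\{p\}}+g_2x_{C_2-\{p\}}$, whence $x_q\mid\partial r$ --- impossible, since $\partial r\neq 0$ and by its definition no single variable divides $\partial r$. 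The rainbow-cycle claim is proved by the same support/divisibility analysis applied to $r=\sum_i r_{C_i}$, normalized to kill the labels $x_{p_1},\ldots,x_{p_{m-1}}$ along a shortest such cycle, yielding the impossible conclusion $x_{p_m}\mid\partial r$. Until arguments of this kind (or a genuine substitute) are supplied, the crux of (i)$\Rightarrow$(ii)/(iv) is missing.

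A secondary, repairable point: your plan to get (iv) directly by expanding $r_{C'}$ in the basis $\{r_C:C\in\mathfrak{D}\}$ and arguing ``along the lines of Lemmas \ref{lm9} and \ref{lm10}'' is circular as written, because Lemma \ref{lm9} requires knowing that $\mathfrak{D}$ is a generating set of $\mathfrak{C}(M(\mathcal{A}))$ with respect to $\prec$, which you only obtain as a byproduct at the end. Once $\mathfrak{D}$ is simple, the cleaner route is the paper's: the leading terms $x_{bc_\prec(C)}$, $C\in\mathfrak{D}$, are pairwise coprime, so $\{\partial r_C:C\in\mathfrak{D}\}$ is a Gr\"obner basis, hence $\mathrm{in}_\prec(I(\mathcal{A}))=\mathcal{I}_\prec(M(\mathcal{A}))$ is a complete intersection, which is (ii); conditions (iii) and (iv) then follow from Theorem \ref{th41}. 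Your numerical argument for (ii)$\Rightarrow$(i) and the linear-independence observation for $\{r_C:C\in\mathfrak{D}\}$ are fine, though the latter is not needed.
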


\begin{proof}
By Theorem \ref{th24} and Theorem \ref{th41}, we only need to prove the implication (i)$\Rightarrow$(ii). Assume that $I(\mathcal{A})$ is a complete intersection. Let $\mathfrak{D}$ be a subset of $\mathfrak{C}$ such that $\{\partial r_{C},C\in\mathfrak{D}\}$ is a minimal system of generators of $I(\mathcal{A})$. We will show that there is an ordering of $[n]$ so that $\mathfrak{D}$ is simple with respect to this ordering. By Remark \ref{rm43}, this will be done after the following two claims have been proved.
\begin{claim}\label{cl1}
 $|C\cap C'|\leq1$ for all $C,C'\in \mathfrak{D}.$
\end{claim}
If this is not the case, then there are distinct elements $p,q\in C_1\cap C_2$ for some $C_1,C_2\in \mathfrak{D}.$ We may assume $r_{C_i}=x_p+\sum_{j\in C_i-\{p\}}a_{ji}x_j$ for $i=1,2$. Then the relation $r=r_{C_1}-r_{C_2}$ does not involve $x_p$. By Lemma \ref{lm414}, $\partial r=f_1\partial r_{C_1}+f_2\partial r_{C_2}$ for some polynomials $f_1,f_2\in S$. Write $f_i=g_i+x_ph_i$ with $g_i,h_i\in S$ and $g_i$ does not involve $x_p$. We have
$$
\begin{aligned}
\partial r&=f_1\partial r_{C_1}+f_2\partial r_{C_2}\\
&=(g_1+x_ph_1)(x_{C_1-\{p\}}+x_pk_1)+(g_2+x_ph_2)(x_{C_2-\{p\}}+x_pk_2)\quad (k_1,k_2\in S)\\
&=g_1x_{C_1-\{p\}}+g_2x_{C_2-\{p\}}+x_pl \quad (l\in S).
\end{aligned}
$$
This yields $\partial r=g_1x_{C_1-\{p\}}+g_2x_{C_2-\{p\}}$ since $\partial r$ does not involve $x_p$. It follows that $x_q\mid \partial r$. But this is impossible by the definition of $\partial r$.
\begin{claim}
The intersection graph $\mathcal{G}(\mathfrak{D})$ does not have a cycle with pairwise distinct edge labels, i.e., a cycle $C_1\ldots C_m$ with $C_i\cap C_{i+1}\ne C_j\cap C_{j+1}$ whenever $i\ne j.$
\end{claim}
We use a similar argument as in the proof of Claim \ref{cl1}. Suppose that $\mathcal{G}(\mathfrak{D})$ contains cycles with pairwise distinct edge labels. Let $C_1\ldots C_m$ be such a cycle with shortest length. Then it is easily seen that $C_i\cap C_j=\emptyset$ for $i=1,\ldots,m$ and $j\ne i-1,i+1$ ($C_{m+1}=C_1$). Let $C_i\cap C_{i+1}=\{p_i\}$ for $i=1,\ldots,m$. Recall that the relations $r_{C_i}$ are determined up to a scalar multiple. So we may choose these relations such that the relation $r=\sum_{i=1}^mr_{C_i}$ does not involve $x_{p_1},\ldots,x_{p_{m-1}}$. By Lemma \ref{lm414}, $\partial r=\sum_{i=1}^mf_i\partial r_{C_i}$ for some $f_i\in S$. Let $P$ be the ideal of $S$ generated by $x_{p_1},\ldots,x_{p_{m-1}}$. Write $f_i=g_i+h_i$ with $h_i\in P$ and $g_i$ does not involve $x_{p_1},\ldots,x_{p_{m-1}}$ for $i=1,m$. Note that $\partial r_{C_i}\in P$ for $i=2,\ldots,m-1$. We have
$$\begin{aligned}
   \partial r=&\ f_1\partial r_{C_1}+f_m\partial r_{C_m}+\sum_{i=2}^{m-1}f_i\partial r_{C_i}\\
=&\ (g_1+h_1)(x_{C_1-\{p_1\}}+x_{p_1}k)+(g_m+h_m)(x_{C_m-\{p_{m-1}\}}+x_{p_{m-1}}k')\\
&+\sum_{i=2}^{m-1}f_i\partial r_{C_i}\quad (k,k'\in S)\\
=&\ g_1x_{C_1-\{p_1\}}+g_mx_{C_m-\{p_{m-1}\}}+l \quad (l\in P).
  \end{aligned}
$$
Similarly as in the proof of Claim \ref{cl1}, this implies $\partial r=g_1x_{C_1-\{p_1\}}+g_mx_{C_m-\{p_{m-1}\}}$, and hence $x_{p_m}\mid \partial r$, which is impossible.

Now assume that $\mathfrak{D}$ is a simple subset of $\mathfrak{C}$ with respect to an ordering $\prec$ of $[n]$. We denote an induced monomial order of $\prec$ on $S$ by the same notation. Then the monomials $\mathrm{in}_\prec(\partial r_C)$ are pairwise coprime for all $C\in\mathfrak{D}$. It follows that $\{\partial r_C:C\in\mathfrak{D}\}$ is a Gr\"{o}bner basis of $I(\mathcal{A})$; see, e.g., \cite[Corollary 2.3.4]{HH}. Hence $\mathrm{in}_\prec(I(\mathcal{A}))=(\mathrm{in}_\prec(\partial r_C):C\in\mathfrak{D})$ is a complete intersection.
\end{proof}

The following corollary follows immediately from the above theorem and \cite[Proposition 1.1]{C}.
\begin{corollary}
Let $\mathcal{A}$ be an essential central arrangement of $n$ hyperplanes. Assume that $I(\mathcal{A})$ is a complete intersection. Then there exists an ordering $\prec$ of $[n]$ (with an arbitrary induced order on $S$) such that
$$
\mathrm{in}_\prec(I(\mathcal{A})^i)=\mathrm{in}_\prec(I(\mathcal{A}))^i \text{ for all } i\geq 1.
$$
\end{corollary}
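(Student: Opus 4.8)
The plan is to deduce this from Theorem \ref{th415} together with the Gr\"obner-theoretic fact recorded in \cite[Proposition 1.1]{C}: if an ideal admits a Gr\"obner basis whose leading terms form a regular sequence, then the products of $i$ of these basis elements form a Gr\"obner basis of the $i$-th power, and in particular $\mathrm{in}_\prec(I^i)=\mathrm{in}_\prec(I)^i$ for all $i\geq 1$. So the whole task is to exhibit such a Gr\"obner basis of $I(\mathcal{A})$.

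First I would invoke Theorem \ref{th415}. Since $I(\mathcal{A})$ is a complete intersection, the proof of the implication (i)$\Rightarrow$(ii) there produces an ordering $\prec$ of $[n]$ and a subset $\mathfrak{D}\subseteq\mathfrak{C}$, simple with respect to $\prec$, such that $\{\partial r_C:C\in\mathfrak{D}\}$ is a minimal system of generators of $I(\mathcal{A})$. Now fix \emph{any} monomial order on $S$ induced by $\prec$. By Theorem \ref{th24} one has $\mathrm{in}_\prec(\partial r_C)=x_{bc_\prec(C)}$, and the simplicity of $\mathfrak{D}$ says exactly that the broken circuits $bc_\prec(C)$, $C\in\mathfrak{D}$, are pairwise disjoint, so the monomials $x_{bc_\prec(C)}$ are pairwise coprime. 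Hence $\{\partial r_C:C\in\mathfrak{D}\}$ is a Gr\"obner basis of $I(\mathcal{A})$ (see \cite[Corollary 2.3.4]{HH}), with $\mathrm{in}_\prec(I(\mathcal{A}))=(x_{bc_\prec(C)}:C\in\mathfrak{D})$.

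The final step is to observe that pairwise coprime monomials form a regular sequence in the polynomial ring $S$, so the leading terms of this Gr\"obner basis form a regular sequence. Applying \cite[Proposition 1.1]{C} to $I=I(\mathcal{A})$ with this Gr\"obner basis then gives $\mathrm{in}_\prec(I(\mathcal{A})^i)=\mathrm{in}_\prec(I(\mathcal{A}))^i$ for all $i\geq 1$. Since the induced monomial order on $S$ was arbitrary once $\prec$ was fixed, the identity holds for an arbitrary induced order, which is what is claimed.

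There is no real obstacle here; the only point that needs care is that the generating set supplied by Theorem \ref{th415} is genuinely a Gr\"obner basis with coprime — hence regular-sequence — leading terms for \emph{every} induced monomial order, and this follows at once from Theorem \ref{th24} and the definition of a simple subset of $\mathfrak{C}$.
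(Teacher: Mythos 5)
Your proposal is correct and follows exactly the paper's intended route: the proof of Theorem \ref{th415} ((i)$\Rightarrow$(ii)) supplies the ordering $\prec$ and the Gr\"obner basis $\{\partial r_C: C\in\mathfrak{D}\}$ whose leading terms $x_{bc_\prec(C)}$ are pairwise coprime, hence a regular sequence, and then \cite[Proposition 1.1]{C} gives $\mathrm{in}_\prec(I(\mathcal{A})^i)=\mathrm{in}_\prec(I(\mathcal{A}))^i$ for all $i\geq 1$. This is precisely the deduction the paper summarizes as "follows immediately from the above theorem and \cite[Proposition 1.1]{C}," so no further comparison is needed.
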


Finally, we verify Conjecture \ref{cj} for arrangements with complete intersection Orlik-Terao ideal. For those arrangements several properties coincide. Recall that the arrangement $\mathcal{A}$ is said to be {\it 2-formal} if the relation space $F(\mathcal{A})$ is spanned by relations corresponding to 3-element circuits; see \cite{FR}.

\begin{corollary}\label{th416}
Let $\mathcal{A}$ be an essential central arrangement of $n$ hyperplanes. Assume that the Orlik-Terao ideal $I(\mathcal{A})$ of $\mathcal{A}$ is a complete intersection. Let $q_1,\ldots,q_h$ be the degree sequence of a minimal system of homogeneous generators of $I(\mathcal{A})$. Then the Poincar\'{e} polynomial of the Orlik-Solomon algebra of $\mathcal{A}$ is
$$
\pi(\mathbf{A}(\mathcal{A}),t)=(t+1)^{n-\sum_{i=1}^hq_i}\prod_{i=1}^h\big((t+1)^{q_i}-t^{q_i}\big).
$$
Moreover, the following conditions are equivalent:
\begin{enumerate}
\item $\pi(\mathbf{A}(\mathcal{A}),t)$ factors completely over $\mathbb{Z}$;
\item $q_i=2$ for all $i=1,\ldots,h$;
\item $\mathcal{A}$ is supersolvable;
\item $\mathcal{A}$ is free;
\item $\mathcal{A}$ is 2-formal;
\item $\mathbf{A}(\mathcal{A})$ is Koszul;
\item $\mathbf{C}(\mathcal{A})$ is Koszul.
\end{enumerate}
\end{corollary}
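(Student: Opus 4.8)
The strategy is to transport everything to the matroid $M(\mathcal{A})$ and then invoke Theorem \ref{th48}. Since $I(\mathcal{A})$ is a complete intersection, Theorem \ref{th415} provides an ordering $\prec$ of $[n]$ for which the minimal broken circuits of $(M(\mathcal{A}),\prec)$ are pairwise disjoint; moreover, as in the proof of Theorem \ref{th415}, there is a subset $\mathfrak{D}\subseteq\mathfrak{C}$, simple with respect to $\prec$, with $\{\partial r_C:C\in\mathfrak{D}\}$ a minimal system of generators of $I(\mathcal{A})$. Because the initial monomials $\mathrm{in}_\prec(\partial r_C)=x_{bc_\prec(C)}$, $C\in\mathfrak{D}$, are pairwise coprime, this family is a Gr\"obner basis of $I(\mathcal{A})$, so $\mathrm{in}_\prec(I(\mathcal{A}))=\mathcal{I}_\prec(M(\mathcal{A}))=(x_{bc_\prec(C)}:C\in\mathfrak{D})$, in accordance with Theorem \ref{th24}; by pairwise coprimality these generators are minimal, hence $\{bc_\prec(C):C\in\mathfrak{D}\}$ is exactly the set of minimal broken circuits of $(M(\mathcal{A}),\prec)$ and the degree sequence $q_1,\dots,q_h$ of $I(\mathcal{A})$ is the list of their cardinalities. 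Applying Theorem \ref{th48} to $M=M(\mathcal{A})$ with this ordering, together with $\mathbf{A}(\mathcal{A})=\mathbf{A}(M(\mathcal{A}))$ (the Orlik--Solomon algebra depends only on the matroid), yields at once the formula for $\pi(\mathbf{A}(\mathcal{A}),t)$ and the equivalences (i) $\Leftrightarrow$ (ii) $\Leftrightarrow$ (iii) $\Leftrightarrow$ (vi).

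It remains to insert conditions (iv), (v) and (vii) into this chain. Freeness I would handle by (iii) $\Rightarrow$ (iv) $\Rightarrow$ (i): the first implication is the Jambu--Terao theorem that supersolvable arrangements are free \cite{OT}, and the second is Terao's factorization theorem \cite{OT}, by which a free arrangement with exponents $b_1,\dots,b_r$ has $\pi(\mathbf{A}(\mathcal{A}),t)=\prod_{i=1}^r(1+b_it)$, a polynomial that factors completely over $\mathbb{Z}$. For the Orlik--Terao algebra: if (ii) holds then each $\partial r_C$ with $C\in\mathfrak{D}$ has degree $q_i=2$, so the Gr\"obner basis above is quadratic and $\mathbf{C}(\mathcal{A})=S/I(\mathcal{A})$ is Koszul by Fr\"oberg's criterion \cite{F}; conversely, Koszulness forces $I(\mathcal{A})$ to be generated by quadrics, and since the degree multiset of a minimal homogeneous generating system of a graded ideal is well defined, the complete intersection $I(\mathcal{A})$ then has $q_i=2$ for all $i$. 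This gives (ii) $\Leftrightarrow$ (vii).

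For $2$-formality, the one genuinely new ingredient, which I expect to be the main obstacle, is that $\{r_C:C\in\mathfrak{D}\}$ is a \emph{basis} of the relation space $F(\mathcal{A})$. Here $\dim_K F(\mathcal{A})=n-r$ (as $\mathcal{A}$ is essential, $\varphi$ maps $S_1$ onto the $r$-dimensional space $V^*$) and $n-r=\codim I(\mathcal{A})=|\mathfrak{D}|$ (recall $\dim\mathbf{C}(\mathcal{A})=r$ by Theorem \ref{th24}), so it suffices to show these relations span $F(\mathcal{A})$. Every element of $F(\mathcal{A})$ is a linear combination of circuit relations $r_{C'}$, $C'\in\mathfrak{C}$, and by Theorem \ref{th41} each such $C'$ equals $\mathcal{C}(\mathfrak{D}')$ for a tree $\mathfrak{D}'\subseteq\mathfrak{D}$, so it is enough to prove $r_{\mathcal{C}(\mathfrak{D}')}\in\mathrm{span}(r_C:C\in\mathfrak{D}')$. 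For this I would choose nonzero scalars $\lambda_C$, $C\in\mathfrak{D}'$, making the coefficient of $x_e$ in $\sum_{C\in\mathfrak{D}'}\lambda_C r_C$ vanish for every $e\in\bigcup_{D\ne D'\in\mathfrak{D}'}(D\cap D')$; by Lemma \ref{lm6} each such $e$ lies in exactly two members of $\mathfrak{D}'$, so this amounts to one homogeneous linear equation per edge of the tree $\mathcal{G}(\mathfrak{D}')$, and the tree structure lets one solve it by propagating a nonzero value of $\lambda$ from a root. The resulting relation is supported exactly on $\mathcal{C}(\mathfrak{D}')$, hence proportional to $r_{\mathcal{C}(\mathfrak{D}')}$. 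Granting the basis claim, a short count using $|\mathcal{C}(\mathfrak{D}')|=\sum_{C\in\mathfrak{D}'}|C|-2(|\mathfrak{D}'|-1)\ge|\mathfrak{D}'|+2$ for $|\mathfrak{D}'|\ge2$ shows the $3$-element circuits of $M(\mathcal{A})$ are precisely the members of $\mathfrak{D}$ of cardinality $3$; then (ii) $\Rightarrow$ (v) is immediate, while if $\mathcal{A}$ is $2$-formal the relations of those $3$-element circuits span $F(\mathcal{A})$, and being a subfamily of a basis they can do so only if $\mathfrak{D}$ consists entirely of $3$-element circuits, i.e.\ all $q_i=2$; this yields (v) $\Rightarrow$ (ii). (Alternatively one could appeal to the $2$-formality criterion of Schenck--Tohaneanu \cite{ST} in terms of the quadratic part of $I(\mathcal{A})$, but the argument above stays within the present framework.)
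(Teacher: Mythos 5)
Your proposal is correct, and for most of the statement it follows the paper's own route: pass to the ordering produced in the proof of Theorem \ref{th415}, identify $q_1,\dots,q_h$ with the sizes of the pairwise disjoint minimal broken circuits, invoke Theorem \ref{th48} for the Poincar\'e formula and (i)$\Leftrightarrow$(ii)$\Leftrightarrow$(iii)$\Leftrightarrow$(vi), handle (iv) via supersolvable $\Rightarrow$ free $\Rightarrow$ factorization, and get (ii)$\Leftrightarrow$(vii) from Fr\"oberg plus the well-definedness of the degree sequence. The genuine divergence is in how you treat 2-formality. The paper inserts (v) by citing Yuzvinsky's ``free implies 2-formal'' for (iv)$\Rightarrow$(v) and then proves (v)$\Rightarrow$(ii) algebraically: the subideal $I_{\mathfrak D}$ generated by the quadratic $\partial r_{C_i}$ is prime by Lemma \ref{lm413}, and the substitution trick from the proof of Lemma \ref{lm414} forces $\partial r\in I_{\mathfrak D}$ for every relation $r$, whence $I(\mathcal{A})=I_{\mathfrak D}$ and all $q_i=2$. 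You instead prove the stronger combinatorial fact that $\{r_C:C\in\mathfrak D\}$ is a basis of $F(\mathcal{A})$ (dimension count $h=n-r=\dim F(\mathcal{A})$, plus spanning via the tree description of circuits and a propagation of coefficients along $\mathcal G(\mathfrak D')$), together with the observation that the 3-element circuits are exactly the cardinality-3 members of $\mathfrak D$; both implications (ii)$\Rightarrow$(v) and (v)$\Rightarrow$(ii) then become linear algebra, and you never need the freeness detour or the primality of $I_{\mathfrak D}$. Your route is somewhat longer but more self-contained and yields extra information (an explicit basis of the relation space and a direct proof of (ii)$\Rightarrow$(v)); the paper's is shorter given Lemmas \ref{lm413} and \ref{lm414} and the external citation. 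Two small points to tighten: the spanning of $F(\mathcal{A})$ by circuit relations should be stated (standard support-reduction argument), and the claim that \emph{your} $\mathfrak D$ realizes the tree description of $\mathfrak{C}$ is best justified by Lemma \ref{lm9} (your broken circuits are exactly the minimal ones, so $\mathfrak D$ is a simple generating set), since Theorem \ref{th41}(iii) as stated only asserts existence of some such subset; with Lemma \ref{lm8}(ii) this also confirms each $\mathcal{C}(\mathfrak D')$ you use is indeed a circuit, as your argument requires.
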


\begin{proof}
Let $C_1,\ldots,C_h$ be circuits of $M(\mathcal{A})$ such that $\{\partial r_{C_i}:i=1,\ldots,h\}$ is a minimal set of generators of $I(\mathcal{A})$. It follows from the proof of Theorem \ref{th415} that for a suitable ordering $\prec$ of $[n]$ the ideal $\mathrm{in}_\prec(I(\mathcal{A}))$ is a complete intersection and is minimally generated by $\{\mathrm{in}_\prec(\partial r_{C_i}):i=1,\ldots,h\}$. In particular, the minimal broken circuits of the matroid $(M(\mathcal{A}),\prec)$ are pairwise disjoint and have the sizes $q_1,\ldots,q_h$. The formula for the Poincar\'{e} polynomial and the equivalence of conditions (i), (ii), (iii), (vi) then follow from Theorem \ref{th48}. For the equivalence of (ii) and (vii), one only needs to notice that $q_1,\ldots,q_h$ is the degree sequence of a minimal system of homogeneous generators of both $I(\mathcal{A})$ and $\mathrm{in}_\prec(I(\mathcal{A}))$; see \cite{F}. It is well-known that the implications (iii)$\Rightarrow$(iv)$\Rightarrow$(i) and (iv)$\Rightarrow$(v) are true in general; see \cite[Theorem 4.58, Theorem 4.137]{OT} and \cite[Corollary 2.5]{Y3}. To complete the proof, we will show (v)$\Rightarrow$(ii). Assume that $\mathfrak{D}=\{C_1,\ldots,C_m\}$ ($m\leq h$) is the subset of $\{C_1,\ldots,C_h\}$ consisting of 3-element circuits. Let $I_\mathfrak{D}=(\partial r_{C_i}:i=1,\ldots,m)$. This ideal is prime by Lemma \ref{lm413}. One easily sees that $\partial r_C\in I_\mathfrak{D}$ for every 3-element circuit $C$ of $M(\mathcal{A})$. Since $\mathcal{A}$ is 2-formal, for any relation $r\in F(\mathcal{A})$ we have $r=\sum_{C\in\mathfrak{D}'}a_Cr_C,$ where $a_C\in{K}$ and $\mathfrak{D}'$ is a set of 3-element circuits. It follows from the proof of Lemma \ref{lm414}, with the notation used there, that $x_{[n]-\Lambda_r}\partial r\in(\partial r_C: C\in\mathfrak{D}').$ Hence $x_{[n]-\Lambda_r}\partial r\in I_\mathfrak{D}$. This implies $\partial r\in I_\mathfrak{D}$ since $I_\mathfrak{D}$ is a prime ideal generated in degree $\geq 2$.  Therefore, $I(\mathcal{A})=(\partial r:r\in F(\mathcal{A}))=I_\mathfrak{D}$, or, in other words, $\mathfrak{D}=\{C_1,\ldots,C_h\}$. So we obtain $q_i=2$ for all $i=1,\ldots,h$, as desired.
\end{proof}

\begin{remark}
Denham, Garrousian and Tohaneanu have independently proved the equivalence of conditions (ii), (iii), (v), (vii) in Corollary \ref{th416} by a different method; see \cite[Corollary 5.12]{DGT}.
\end{remark}

\end{document}